\tikzstyle{arc}=[->,shorten <=3pt, shorten >=3pt,
\tikzstyle{edge}=[shorten <=2pt, shorten >=2pt,
\tikzstyle{blueE}=[shorten <=2pt, shorten >=2pt,
\tikzstyle{redE}=[shorten <=2pt, shorten >=2pt,
\tikzstyle{vertex}=[circle, fill=white, draw,
\tikzstyle{redA}=[->, dashed, shorten <=3pt, shorten >=3pt, red, >=stealth,
\tikzstyle{blueA}=[->, shorten <=3pt, shorten >=3pt, blue, >=stealth,
\newtheorem{theorem}{Theorem}[section]
\newtheorem{lemma}[theorem]{Lemma}
\newtheorem{corollary}[theorem]{Corollary}
\newtheorem{observation}[theorem]{Observation}
\newtheorem{conjecture}[theorem]{Conjecture}
\theoremstyle{remark}
\theoremstyle{definition}
\newtheorem{example}{Example}
\newcommand{\calC}{\mathcal C}
\newcommand{\calB}{\mathcal B}
\newcommand{\calF}{\mathcal F}
\newcommand{\calA}{\mathcal A}
\newcommand{\calP}{\mathcal P}
\newcommand{\calT}{\mathcal T}
\newcommand{\bA}{\mathbb A}
\newcommand{\bB}{\mathbb B}
\newcommand{\bC}{\mathbb C}
\newcommand{\bD}{\mathbb D}
\newcommand{\bG}{\mathbb G}
\newcommand{\bH}{\mathbb H}
\newcommand{\bS}{\mathbb S}
\newcommand{\bF}{\mathbb F}
\newcommand{\bT}{\mathbb T}
\newcommand{\bX}{\mathbb X}
\newcommand{\ignore}[1]{}
\DeclareMathOperator{\CSP}{CSP}
\DeclareMathOperator{\CSPs}{CSPs}
\DeclareMathOperator{\Age}{Age}
\DeclareMathOperator{\SNP}{SNP}
\DeclareMathOperator{\Forb}{Forb}
\DeclareMathOperator{\NP}{NP}
\DeclareMathOperator{\NE}{NE}
\renewcommand{\P}{\textnormal{P}}
\DeclareMathOperator{\NEE}{NEE}
\DeclareMathOperator{\EE}{EE}
\DeclareMathOperator{\coNP}{coNP}
\DeclareMathOperator{\E}{E}
\DeclareMathOperator{\PO}{P}
\DeclareMathOperator{\full}{full}
\DeclareMathOperator{\diam}{diam}
\DeclareMathOperator{\pd}{pd}
\DeclareMathOperator{\NT}{NT} % Not Twin
\DeclareMathOperator{\HER}{Her}
\DeclareMathOperator{\HerFO}{HerFO}
\DeclareMathOperator{\ESO}{ESO}
\newcommand{\PDdiam}[1]{\diam^{\pd}_{#1}}
\title{On the Computational Power of Extensional ESO\thanks{This project has received funding from the European Union
(Project POCOCOP, ERC Synergy Grant 101071674).
Views and opinions expressed are however those of the author(s) only and do not
necessarily reflect those of the European Union or the European Research
Council Executive Agency. Neither the European Union nor the granting
authority can be held responsible for them.}}
 \author[1]{Manuel Bodirsky\thanks{manuel.bodirsky@tu-dresden.de}}
 \author[1]{Santiago Guzm\'an-Pro\thanks{santiago.guzman\_pro@tu-dresden.de}}
 \affil[1]{Institut f\"ur Algebra, TU Dresden, Germany}
\begin{document}
\date{}

\maketitle
\begin{abstract}
\emph{Extensional ESO} is a fragment of existential second-order logic
(ESO) that captures the following family of problems. Given a fixed ESO
sentence $\Psi$ and an input structure $\mathbb A$ the task if to decide
whether there is an \emph{extension} $\bB$ of $\bA$ that  satisfies the
first-order part of $\Psi$, i.e.,  a structure ${\mathbb B}$ such  that
$R^{\mathbb A}\subseteq R^{\mathbb B}$ for every existentially quantified
predicate $R$ of $\Psi$,  and $R^{\mathbb A} = R^{\mathbb B}$ for every
non-quantified predicate $R$ of $\Psi$. In particular, extensional ESO
describes all pre-coloured finite-domain constraint satisfaction problems (CSPs).

In this paper we study the computational power of extensional ESO; 
we ask, \emph{for which problems in $\NP$ is there a polynomial-time equivalent problem in
extensional ESO?}.
One of our main results states that extensional ESO has the same computational power as
\emph{hereditary first-order logic}. We also characterize the computational
power of the fragment of extensional ESO with monotone universal first-order part in
terms of finitely bounded CSPs. These results suggest a rich computational power of
this logic, and we conjecture that extensional ESO captures NP-intermediate problems. We further
support this conjecture by showing that extensional ESO can express
current candidate  NP-intermediate problems such as Graph Isomorphism, and Monotone Dualization
(up to polynomial-time equivalence). On the other hand, another main result
proves that extensional ESO does not have the full computational power of NP: there are problems
in  NP that are not polynomial-time equivalent to a problem in extensional ESP (unless E=NE).

\end{abstract}

\tableofcontents

%%%%%%%%%%%%%%%%%%%%%%%%%%%%%%%%%%%%%%%%%%%%%
%%%%%%%%%%%%%%%%%%%%%%%%%%%%%%%%%%%%%%%%%%%%%
%%%%%%%%%%%%%%%%%%%%%%%%%%%%%%%%%%%%%%%%%%%%%
%%%%%%%%%%%% Introduction %%%%%%%%%%%%%%%%%%%%%%%%%%%
%%%%%%%%%%%%%%%%%%%%%%%%%%%%%%%%%%%%%%%%%%%%%
%%%%%%%%%%%%%%%%%%%%%%%%%%%%%%%%%%%%%%%%%%%%%
%%%%%%%%%%%%%%%%%%%%%%%%%%%%%%%%%%%%%%%%%%%%%

\setcounter{page}{1}

\section{Introduction} 

 \emph{First-order logic (FO)} has good computational properties: 
given a fixed first-order sentence $\phi$ it can be verified in deterministic
logarithmic space whether an input structure $\bA$ satisfies $\phi$. 
However, it has been realized long ago that FO has a very limited expressive power when
it comes to describing classes of finite structures.  For instance, there is no first-order
sentence which holds on a finite graph if and only if the graph does not contain cycles
(i.e., is a forest). In contrast, second-order logic is very powerful and captures the
entire  polynomial-time hierarchy~\cite{Immerman}.  Even restricted fragments
of second-order logic can be very expressive. For instance, Fagin~\cite{Fagin} 
showed that \emph{existential second-order logic (ESO)}  captures the complexity class NP. Taking
complements, \emph{universal second-order logic (USO)} captures coNP. 

Further restrictions of ESO  might no longer have the full expressive power, but still the full
\emph{computational} power of NP, in the sense that every problem in NP is polynomial-time equivalent to a
problem that can be expressed in the restricted logic; we will call such classes \emph{NP-rich}.%
\footnote{If we consider disjunctive, conjunctive, or Turing reductions, we call such classes \emph{conjunctive, disjunctive,} or \emph{Turing NP-rich}, respectively. These reductions are defined in Section~\ref{sect:prelims}.}
A prominent example of an NP-rich ESO fragment is \textit{SNP} (for \emph{strict NP}), which is defined as the
restriction of ESO to sentences with a universal first-order part. SNP is quite expressive, and contains for instance
all of Datalog~\cite{FederVardi}, but clearly it cannot express every class of finite structures in NP,  because
SNP can only describe
\emph{hereditary classes}, i.e., classes $\calC$ closed under taking induced substructures. However, Feder and
Vardi~\cite{FederVardi}  proved that SNP is NP-rich (see~\cite{BarsukovM23} for some of the missing details of
the proof given in~\cite{FederVardi}). In particular, it follows from Ladner's theorem~\cite{ladnerACM22} that
whenever a fragment $\mathcal L$ of ESO is NP-rich, then  $\mathcal L$ does not exhibit a P
versus NP-complete dichotomy. 

On the dichotomy side, Feder and Vardi showed that a fragment of ESO called
\emph{monotone monadic SNP (MMSNP)} has the same computational power as finite-domain 
CSPs. Hence, it follows from the finite-domain CSP dichotomy~\cite{BulatovFVConjecture,ZhukFVConjecture}
that MMSNP exhibits a P vs.\ NP-complete dichotomy.  Bienvenu,  ten Cate, Lutz, and 
Wolter~\cite{bienvenu2014} asked whether the dichotomy for MMSNP extends to a broader
fragment called \emph{guarded monotone SNP (GMSNP)}. In this direction,
Bodirsky, Kn\"auer, and Starke~\cite{bodirsky_asnp} showed that GMSNP should indeed
exhibit a dichotomy according to the Bodirsky--Pinsker conjecture~\cite[Conjecture 3.7.1]{Book}
--- yet, the GMSNP (conjectured) dichotomy seems elusive. 
It is worth mentioning that so far, the 
main technique to show that a fragment of ESO does not exhibit a P versus NP-complete dichotomy is
by showing that it is NP-rich~\cite{KunNesetril24}.

\paragraph{The story.} In this paper we initiate the study of the computational power
    of fragments of ESO beyond complexity dichotomies and NP-richness. We propose to
    find a \emph{natural} and \emph{decidable} syntactic fragments of ESO that is neither
    NP-rich nor exhibits a P vs.\ NP-complete dichotomy. To this end,
    we introduce \emph{extensional ESO}, and we exhibit several families of problems
    that can be expressed in extensional ESO, ranging from the world of CSPs to
    graph theory
    and to language theory. We then address the question
    ``what is the computational  power of extensional ESO?''.
    We show that extensional ESO has the same computational power as the recently introduced
    formalism \emph{hereditary first-order logic}. We also show that extensional ESO
    is not NP-rich, unless $\E = \NE$.   
    However, we conjecture that extensional ESO captures NP-intermediate problems.
    We support this conjecture by showing that some examples of candidate NP-intermediate
    problems such as Graph Isomorphism and
    Monotone Dualization are expressible in extensional ESO.
    We also show that a very large class of CSPs, namely the class of CSPs for
    \emph{finitely bounded structures}, can be expressed in extensional ESO;
    we believe that this class contains NP-intermediate problems. 
    
    In Figure~\ref{fig:landscape} we present an illustration of this story,
    and we now present a brief description of \emph{emphasized characters}
    (all missing details will be carefully introduced later in the paper).

\paragraph{The main character.} The central role of this paper is played by \emph{extensional ESO}.
An ESO $\tau$-sentence $\Psi$ is called \emph{extensional} if for every existentially quantified
predicate $R$ there is a distinct symbol $R'\in \tau$ of the same arity such that $\Psi$
contains a conjunct $\forall \overline{x} \big (R'(\bar x) \Rightarrow R(\bar x) \big )$, and $R'$ does
not appear anywhere else in $\Psi$. In this case we say that $R$ \emph{extends} $R'$.
Clearly, every problem described by an ESO sentence $\Phi$ is in $\NP$.
To present two simple examples, we consider \emph{digraph acyclicity}, and
\emph{precoloured $3$-coloring problem} from graph theory; further examples can be found in Section~\ref{sec:extensional-ESO}.

\begin{example}[Acyclic digraphs]\label{ex:DAG}
The class of strict linear orders can be easily axiomatized in universal first-order
logic with a binary relation symbol $<$. Let $\phi$ be such a sentence, and notice
that a digraph $\mathbb D$ is a acyclic if and only if is a spanning graph of a transitive tournament,
in other words, of a strict linear order. Hence, the class of acyclic digraphs is described by
the extensional ESO sentence
\[
    \exists {<}\; \forall x,y  \big( (E(x,y)\Rightarrow ~{<}(x,y))~\land~\phi \big).
\]
\end{example}

 \begin{example}[Pre-coloured 3-colourability]
 Given an input graph $G$ with a partial colouring $c\colon U\subseteq V\to \{R,G,B\}$, 
    the task is to decide whether there is a $3$-colouring of $G$ such that its restriction to $U$
    coincides with $c$. This problem can be expressed by the extensional ESO sentence
 \begin{align*}
     \exists R,G,B \; \forall x,y & \big (
     (R'(x) \Rightarrow R(x)) \wedge 
     (G'(x) \Rightarrow G(x)) \wedge 
     (B'(x) \Rightarrow B(x)) \\
     & \wedge (E(x,y) \Rightarrow \neg (R(x) \wedge R(y)) \wedge \neg(G(x) \wedge G(y)) \wedge \neg(B(x) \wedge B(y))) \\
     & \wedge (R(x) \vee G(x) \vee B(x)) \big ) .
 \end{align*}
 Here, $R$, $G$, and $B$ denote the color classes of the graph, and must denote extensions of input predicates
 $R'$, $G'$, and $B'$ which can be used to
 enforce that some variables are colored by a specific color.
\end{example}

\paragraph{HerFO and the tractability problem.}
As supporting character we have \emph{hereditary first-order logic}. This is a recently
introduced logic that can naturally model several computational problems such as digraph
acyclicity, graph chordality, and
several infinite-domain CSPs~\cite{ManuelSantiagoCSL} --- parametrized variants of this
problem were previously studied, e.g., in~\cite{banachMFCS24,fominTCS64}.
A class $\mathcal C$ of structures is in hereditary first-order logic ($\HerFO$) if
there exists a first-order sentence $\phi$ such that a structure $\bA$ is in ${\mathcal C}$ if
and only if all (non-empty) substructures of $\bA$ satisfy $\phi$. HerFO can be naturally
defined as a fragment of universal monadic second-order logic~\cite[Observation~4.1]{ManuelSantiagoCSL}.

The \emph{tractability problem} for second-order logic asks whether a given SO sentence $\Phi$
describes a polynomial-time solvable problem. The tractability problem is already undecidable
for ESO (and thus, for USO as well); see, e.g.,~\cite{Book}. As a rule of thumb, we expect that if a fragment
$\mathcal L$ of ESO (or of USO) has a rich computational power,  e.g, is NP-rich,
then the tractability problem is  undecidable for $\mathcal L$.
It is still open whether HerFO captures coNP-intermediate problems, nonetheless, it is known that the
tractability problem is already undecidable for HerFO~\cite{ManuelSantiagoCSL}.

\paragraph{Finitely bounded CSPs.}
Our lead secondary character if the family of CSPs of finitely  bounded structures. 
A structure $\bB$ is called \emph{finitely bounded} if there exists a finite set ${\mathcal F}$ of finite
structures such that a finite structure $\bA$ embeds into $\bB$ if and only if no structure from ${\mathcal F}$
embeds into $\bA$. In particular, every finite structure is finitely bounded, but also the countable homogeneous
linear order $(\mathbb Q, <)$ is finitely bounded, as well as the generic partial order and the generic semilinear order~\cite{macphersonDM311}.
Examples from graph theory include the universal homogeneous $K_k$-free
graphs~\cite{LachlanWoodrow} and the generic circular triangle-free graph~\cite{bodirskyJGT109}.

CSPs of finitely bounded structures are closely related to the previously introduced logic SNP. 
In particular, there is a syntactic fragment of SNP that captures CSPs of \emph{reducts} of finitely 
bounded structures~\cite[Corollary~1.4.12]{Book} --- a reduct of a structure $\bA$ is a structure
obtained $\bA'$ by forgetting some relations in $\bA$. Actually, this family of CSPs
(and the corresponding fragment of SNP) has the full computational power of NP~\cite[Proposition~13.2.2]{Book}.
This has lead some people (including us) to believe that CSPs of finitely bounded structures
also capture NP-intermediate problems. It would be an amazing surprise if CSPs of
finitely bounded structures exhibit a P vs.\ NP-complete dichotomy, but as soon as one considered their
reducts, we obtain the full computational power of NP!

\begin{figure}[ht!]
\begin{center}
            \begin{tikzpicture}[scale = 0.6];
        \begin{scope}[scale=0.6]
                \node at (-3.5,12) {\color{blue} \scriptsize NP-rich};
                \node at (-3.5,11) {\color{blue} \textit{\tiny Feder-Vardi~\cite{FederVardi}}};
                \node (ESO) at (24,11) {\scriptsize ESO};
                \node (NP) at (31,11) {\scriptsize NP};
                \node (SNP) at (16,8) {\scriptsize SNP};
                \node  (MESO) at (28,5) {\scriptsize EMSO};
                \node (MSNP) at (8,5) {\scriptsize monotone SNP};
                \node (CSP-SNP) at (0,2) {\tiny CSPs of reducts};
                \node (CSP-SNPa) at (0,1) {\tiny{of finitely bounded structures}};

                \node (extESO) at (24,-7) {\scriptsize ext.\ ESO};
                
                 \draw[teal, dashed, rounded corners] (-7,0) rectangle (34,13);
                \draw[->] (SNP) to  (ESO);
                \draw[->] (MSNP) to  (SNP);
                \draw[->] (MESO) to  (ESO);
                \draw[->] (CSP-SNP) to  (MSNP);
                \draw[->] (extESO) to  (ESO);

                \node at (27.5,11) {\tiny\textit{Fagin~\cite{Fagin}}};
                \node at (12,10.5) {\color{blue} \tiny\textit{Feder-Vardi}};
                \node[rectangle, fill = white, minimum height = 0.25cm, minimum width =2cm] at (22,7){}; 
                \node at (23,7) {\color{blue} \tiny\textit{Feder-Vardi}};
                \draw[->] (ESO) to [bend right]  (NP);
                \draw[->] (NP) to [bend right]  (ESO);
                \node at (4, 6.7) {\color{blue} \tiny\textit{\cite[Cor.~13.2.3]{Book}}};
                \node at (5.4,2.75) {\tiny\textit{\cite[Cor.~1.4.12]{Book}}};

                \draw[orange, dashed, rounded corners] (-7,-5) rectangle (34,-1);
                \node at (-3.5,-2) {\color{orange} \scriptsize Not NP-rich};
                \node at (-3.5,-3) {\color{orange} \tiny (unless E = NE)};
                \node at (-3.5,-4) {\color{orange} \textit{\tiny Theorem~\ref{thm:d-self-reducible-not-NP-rich}}};
                \node at (4,-2) {\color{orange} \scriptsize No dichotomy};
                \node at (4,-3) {\color{orange} \tiny \textit{Theorem~\ref{thm:non-dicho}}};
                \node (d-self) at (31,-2.5) {\scriptsize disjunctive};
                \node (d-selfa) at (31,-3.5) {\scriptsize self-reducible};
                \draw[->] (d-self) to  (NP);
                \node at (28.5,2) {\tiny\textit{\cite[Theorem 1]{koJCSS26}}};
                
                \draw[red, dashed, rounded corners] (-7,-17) rectangle (34,-6);
                \node at (-3.5,-7) {\color{red} \scriptsize Not NP-rich};
                \node at (-3.5,-8) {\color{red} \tiny (unless E = NE)};
                 \node at (-3.5,-9) {\color{red} \textit{\tiny Corollary~\ref{cor:notNPrich}}};

                \node (coHerFO) at (31,-10.5) {\scriptsize  co-HerFO};
                \draw[->, dashed, blue] (ESO) to  [bend right] (MSNP);
                \draw[->, dashed, blue] (ESO) to  [bend right] (MESO);
                \draw[->, dashed, blue] (MSNP) to  [bend right, out = 300] (CSP-SNP);

                 \node (extSNP) at (16,-9) {\scriptsize ext.\ SNP};
                \node  (extMESO) at (26,-14) {\scriptsize  ext.\ EMSO};
                \node  (NPI) at (20,-13) {\scriptsize  $\{$GI, MD$\}$};
                 \node (extMSNP) at (8,-12) {\scriptsize monotone ext.\ SNP};
                 \node (extCSP) at (0,-15) {\tiny CSPs of};
                \node at (0,-16) {\tiny{finitely bounded structures}};
                
                \draw[->] (extCSP) to  (CSP-SNPa);
                \draw[->] (extMSNP) to  (MSNP);
                \draw[->] (extSNP) to  (SNP);
                \draw[->] (extESO) to  (d-selfa);
                \draw[->] (extSNP) to  (extESO);
                \draw[->] (extMSNP) to  (extSNP);
                \draw[->] (extMESO) to  (extESO);
                \draw[->] (extCSP) to  (extMSNP);
                \draw[->, dashed, red] (extMSNP) to  [bend right, out = 300] (extCSP);
                \draw[->, dashed, red] (extESO) to  [bend left = 20] (coHerFO);
                \draw[->, dashed, red] (coHerFO) to  [bend left = 10] (extMESO);
                \draw[->, dashed, red] (NPI) to  (extESO);

                \node at (6,-14) {\tiny\textit{Corollary~\ref{cor:FBstructures}}};
                \node at (4.5,-10.3) {\color{red} \tiny\textit{Theorem~\ref{thm:muSNP-fbCSP-sucSNPwe}}};
                \node at (27,-9) {\color{red} \tiny\textit{Theorem~\ref{thm:meESO-extESO-coHerFO}}};
                \node at (31.3,-12.5) {\color{red} \tiny\textit{Lemma~\ref{lem:HerFO-UMSO}}};
                \node at (29.3,-5.5) {\tiny\textit{Lemma~\ref{lem:extESO->d-self-reducible}}};

                \node at (20,-10.25) {\color{red} \tiny\textit{Section~\ref{sec:NP-intermediate}}};

            \end{scope}

        \end{tikzpicture}
        \end{center}
\caption{\small Classes of computational problems studied in this paper. Solid arcs indicate 
inclusions, and a dashed arc
from $A$ to $B$ indicates that every problem in $A$ is polynomial-time equivalent to a problem in $B$
--- for a cleaner picture we omit the solid arc from  extensional EMSO to  EMSO.
GI stands for the Graph Isomorphism problem, and MD for the Monotone Dualization problem.}
\label{fig:landscape}
\end{figure}

\subsection*{Our contributions}
Our first main result is a characterization of the computational power of extensional ESO in terms of 
hereditary first-order logic (Theorem~\ref{thm:meESO-extESO-coHerFO}): we show that 
for every sentence $\Psi$ in extensional ESO there exists a first-order formula $\phi$ such that 
$\HER(\phi)$ is polynomial-time (in fact, log-space) equivalent to the problem described by the negation of $\Psi$,
and vice versa. As a byproduct of this connection, we get that the tractability
problem for extensional ESO is undecidable (Corollary~\ref{cor:tractability-problem}). 
We then look at fragments of extensional ESO defined by restrictions
on the quantification and on the polarity of atoms of the first-order part. In particular, our second
main result characterizes the computational power of \emph{monotone extensional SNP} (introduced below)
in terms of  CSPs of finitely bounded structures (Theorem~\ref{thm:muSNP-fbCSP-sucSNPwe}).
We also show that extensional ESO captures surjective CSPs, as well as several
graph sandwich problems, orientation completion problems, and constraint topological sorting problems.
We further present examples of candidate NP-intermediate problems expressible in extensional ESO;
namely, Graph Isomorphism and Monotone Dualization (Section~\ref{sec:NP-intermediate}). 

These results point towards a rich computational power of extensional ESO, and we thus
conjecture that extensional ESO does not exhibit a P vs.\ NP-complete dichotomy (Conjecture~\ref{conj:intermediate}). 
However, our last main
result shows that extensional ESO cannot be NP-rich (Corollary~\ref{cor:notNPrich}): there are
problems in $\NP$ that are not polynomial-time equivalent to a problem in extensional ESO, unless E = NE
(the class E is the class of problems computable in exponential time with linear exponent; NE is
its non-deterministic pendant). The assumption that E $\neq$ NE is stronger than the usual assumption
that P $\neq$ NP, but still a common assumption in complexity theory~\cite{BellareGoldwasser,hemaspaandraJCSS5,selmanIC78}.
To prove this result, we show that every problem in extensional ESO is \emph{disjunctively self-reducible},
and that this class is not NP-rich, unless E = NE (Theorem~\ref{thm:d-self-reducible-not-NP-rich});
we complement this result by showing that disjunctive self-reducible sets do not exhibit a P versus
NP-complete complexity dichotomy (Theorem~\ref{thm:non-dicho}). It also follows from this
result that CSPs of finitely bounded structures do not capture the full computational
power of NP, and that HerFO cannot be coNP-rich.

\paragraph{Outline.} 
We begin by recalling some basic concepts from finite model theory (Section~\ref{sect:prelims}) 
and continue to present several families of examples captured by
extensional ESO in Section~\ref{sec:extensional-ESO}.
In Section~\ref{sect:uSNP} we show that extensional ESO and HerFO have the same computational power. 
An important restriction of extensional ESO is monotone extensional SNP (Section~\ref{sect:mon-ext-SNP}), which we show has the
same computational power as  CSPs of finitely bounded structures (Theorem~\ref{thm:muSNP-fbCSP-sucSNPwe}). 
In Section~\ref{sec:NP-intermediate} we prove that Graph Isomorphism 
and Monotone Dualization are captured by extensional ESO (up-to polynomial-time equivalence).
We discuss the limitations of the computational power of extensional ESO in Section~\ref{sect:limitations}, and in particular
we prove that extensional ESO is not NP-rich, unless E = NE (Corollary~\ref{cor:notNPrich}).
We close with a series of problems that are left open (Section~\ref{sect:open}).

\section{Preliminaries} 
\label{sect:prelims} 
We assume basic familiarity with first-order logic and we follow standard notation from
model theory, as, e.g., in~\cite{Hodges}. We also use standard notions from complexity theory.

\subsection{(First-order) structures}
Given a relational signature $\tau$ and a $\tau$-structure $\bA$,
we denote by $R^\bA$ the interpretation in $\bA$ of a relation symbol $R\in \tau$. Also, 
we denote relational structures with letters $\bA, \bB, \bC,\dots$, and their domains by
$A,B,C,\dots$. In this article, structures have non-empty domains. 

If $\bA$ and $\bB$ are $\tau$-structures,
then a \emph{homomorphism} from $\bA$ to $\bB$ is a map $f \colon A \to B$ such that
for all $a_1,\dots,a_k \in A$ and $R \in \tau$ of arity $k$, if $(a_1,\dots,a_k) \in R^{\bA}$, then 
$R(f(a_1),\dots,f(a_k)) \in R^{\bB}$ 
We write $\bA \to \bB$ if there exists a homomorphism from $\bA$ to $\bB$, and we denote by 
$\CSP(\bB)$ the class of finite structures $\bA$ such that $\bA\to \bB$. 
Let ${\mathcal C}$ be a class of finite $\tau$-structures.
We say that ${\mathcal C}$ is 
\begin{itemize}
    \item \emph{closed under homomorphisms} if
    for every $\bB \in {\mathcal C}$, if $\bB \to \bA$, then $\bA \in {\mathcal C}$ as well.
    \item \emph{closed under inverse homomorphisms} if for every $\bB \in {\mathcal C}$, if $\bA \to \bB$, then $\bA \in {\mathcal C}$ as well (i.e., the complement of ${\mathcal C}$ in the class of all finite $\tau$-structures is closed under homomorphisms). 
\end{itemize}
Note that $\CSP(\bB)$ is closed under inverse homomorphisms. 

If $\bA$ and $\bB$ are $\tau$-structures with disjoint domains $A$ and $B$,
respectively, then the \emph{disjoint union} $\bA \uplus \bB$ is the $\tau$-structure $\bC$ with domain
$A \cup B$ and the relation $R^{\bC} = R^{\bA} \cup R^{\bB}$ for every $R \in \tau$. Note that
$\CSP(\bB)$ is \emph{closed under disjoint unions}, i.e., if $\bA \in {\mathcal C}$ and
$\bB \in {\mathcal C}$, then $\bA \uplus \bB \in {\mathcal C}$. 

For examples we will often use graphs and digraphs. Since this paper is deeply involved
with logic and finite model theory, we will think of digraphs as binary structures with signature
$\{E\}$, following and adapting standard notions from graph theory~\cite{bondy2008} to this setting. 
In particular, given a digraph $\bD$, we call $E^\bD$ the \emph{edge set} of $\bD$, and its
elements we call \emph{edges} or \emph{arcs}.
Also, a \emph{graph} is a digraph whose
edge set is a  symmetric relation, and an \emph{oriented graph} is a digraph whose edge
set is an anti-symmetric relation. Given a positive integer $n$, we denote by 
$K_n$ the complete graph on $n$ vertices, i.e., the graph with vertex set $[n]$
and edge set $(i,j)$ where $i\neq j$. A \emph{tournament} is an oriented graph whose symmetric
closure is a complete graph,  and an \emph{oriented path} is an oriented graph whose symmetric
closure is a path. We denote by $\vec{P_n}$ the \emph{directed path} on $n$ vertices, 
 i.e., the oriented path with vertex set $[n]$ and edges $(i,i+1)$ for $i\in[n-1]$.

\subsection{SNP and its fragments}
Let $\tau$ be a finite set of relation symbols.
An \emph{SNP $\tau$-sentence} (short for \emph{strict non-deterministic polynomial-time})
is a sentence of the form 
$$ \exists R_1,\dots,R_k \forall x_1,\dots,x_n. \psi$$
where $\psi$ is a quantifier free $\tau \cup \{R_1,\dots,R_k\}$-formula. 
If the structure $\bA$ satisfies the sentence $\Psi$, we write $\bA \models \Psi$.
We say that a class of finite $\tau$-structures $\mathcal C$ is in \emph{SNP} if
there exists an SNP $\tau$-sentence $\Phi$ such that  $\bA \models \Phi$ if and only
if $\bA \in {\mathcal C}$.  Note that the class of (finite) models of an SNP sentence is
closed under substructures.

A first-order $\tau$-formula $\phi$ is called \emph{positive} if it does not use the negation symbol $\neg$
(so it just uses the logical symbols $\forall,\exists,\wedge,\vee,=$, variables and symbols from $\tau$). 
The negation of a positive formula is called \emph{negative}; note  that every negative formula is equivalent
to a formula in prenex conjunctive normal form where every atomic formula appears in negated form, and all
negation symbols are in front of atomic formulas; such formulas will also be called negative.

Consider a first-order sentence  $\phi$ in prenex conjunctive normal form without (positive)
literals of the form $x = y$. Let $\psi$ be a clause of $\phi$, and let 
$\bC$ be the structure whose vertex set is the set of variables appearing in $\psi$, and
the interpretation of $R\in \tau$ consists of those tuples $\overline x$ such that 
$\psi$ contains the negative literal $\lnot R(\overline x)$
(this structure is sometimes called the \emph{canonical database} of $\psi$). We say $\psi$ is \emph{connected}
if $\bC$ is connected, and that $\phi$ is \emph{connected} if every clause of $\phi$ is connected --- notice that
in particular, the notion of connectedness is well-defined for negative first-order logic.
Given an $\SNP$ $\tau$-sentence $\Phi$ with first-order part $\phi$ in conjunctive normal form (CNF), we say
that $\Phi$ is 
\begin{itemize}
    \item \emph{connected} if $\phi$ is connected (so $\phi$ does not contain (positive) literals of the form $x = y$), 
    \item \emph{monotone} if every symbol from $\tau \cup \{=\}$ is negative in $\phi$, 
    \item \emph{monadic} if every existentially quantified symbol is monadic (i.e., unary), and
    \item \emph{without equalities} if $\phi$ does not use the equality symbol (so it just uses the logical symbols $\forall,\exists,\wedge,\vee,\lnot $, variables and symbols from $\tau$).
\end{itemize}
Notice that for every SNP sentence $\Phi$ we may assume that it does not
contain literals of the form $\lnot (x =  y)$; otherwise, we obtain an equivalent formula by deleting every such literal
and replacing each appearance of the variable $y$ in the same clause by the variable $x$. Hence, in this paper we assume without loss of generality that if $\Phi$ is in monotone $\SNP$, then it is in $\SNP$ without equalities.\footnote{We deviate from the notation from~\cite{FederVardi}:
they call this fragment $\SNP$ without inequalities since they assume negation normal form and we do not.
} Connected monotone $\SNP$ is closely related to $\CSP$s in the following sense.

\begin{theorem}
[Corollary 1.4.12 in~\cite{Book}]
\label{thm:SNP-CSP}
    An $\SNP$ sentence $\Phi$ describes a problem of the form $\CSP(\bA)$ for some (infinite) structure
    $\bA$ if and only if $\Phi$ is equivalent to a connected monotone $\SNP$ sentence.
\end{theorem}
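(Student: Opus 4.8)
The plan is to prove the two implications separately; in both directions the bridge is the pair of semantic closure properties ``closed under inverse homomorphisms'' and ``closed under disjoint unions.'' Recall that $\CSP(\bA)$ always has both properties (as noted above), and conversely a class $\calC$ of finite $\tau$-structures with both properties equals $\CSP(\bA)$ for $\bA:=\biguplus\{\bC:\bC\in\calC\}$ (taking one representative per isomorphism type; since $\tau$ is finite this $\bA$ is countable, and $\bB\to\bA$ iff $\bB\in\calC$ follows by decomposing $\bB$ into connected components and using the two closure properties). So it suffices to match ``connected monotone $\SNP$'' with ``$\SNP$ together with these two closure properties.''

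\emph{The direction $\Leftarrow$.} Assume $\Phi$ is equivalent to a connected monotone $\SNP$ sentence $\Phi'=\exists R_1,\dots,R_k\,\forall\bar x\,\psi$. I would first show that $\MOD(\Phi')$ is closed under inverse homomorphisms: given a homomorphism $h\colon\bA\to\bB$ of $\tau$-structures and an expansion $\bB^+$ witnessing $\bB\models\Phi'$, pull the existential relations back along $h$ (put $R_i^{\bA^+}:=h^{-1}(R_i^{\bB^+})$) and check clause by clause that the resulting expansion $\bA^+$ of $\bA$ satisfies $\forall\bar x\,\psi$; this works precisely because every $\tau$-atom of $\psi$ is negative, while the pullback makes $h$ a $(\tau\cup\{R_1,\dots,R_k\})$-homomorphism so that the existential atoms behave correctly. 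Second, I would show that $\MOD(\Phi')$ is closed under disjoint unions: in a purported violation of a clause $C$ in $\bA^+\uplus\bB^+$ every negative literal of $C$ holds, hence each atom's tuple lies entirely in $A$ or entirely in $B$; since the canonical database of $C$ is connected this pins the whole assignment to one side and contradicts $\bA^+\models\forall\bar x\,\psi$ or $\bB^+\models\forall\bar x\,\psi$. With both closure properties available, the construction above produces a (countable) $\bA$ with $\MOD(\Phi)=\MOD(\Phi')=\CSP(\bA)$.

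\emph{The direction $\Rightarrow$.} Assume $\Phi$ is an $\SNP$ sentence with $\MOD(\Phi)=\CSP(\bA)$; then $\MOD(\Phi)$ is closed under inverse homomorphisms and under disjoint unions, and the task is to rewrite $\Phi$ in connected monotone form using only these two facts. I would proceed in two steps. \emph{Monotonization:} introduce for each $S\in\tau$ a fresh existential symbol $\tilde S$ of the same arity, replace every occurrence of every $\tau$-symbol in $\psi$ by its copy, and add the conjuncts $\forall\bar y\,(\lnot S(\bar y)\vee\tilde S(\bar y))$; after the standard elimination of equalities the result $\Phi'$ is monotone, and $\Phi'\equiv\Phi$ because (i) guessing $\tilde S:=S^\bB$ shows $\MOD(\Phi)\subseteq\MOD(\Phi')$, and (ii) a witnessing expansion for $\bB\models\Phi'$ determines a $\tau$-structure $\bB^\ast$ --- reading the guessed $\tilde S$ as the new interpretation of $S$ --- with $\bB^\ast\models\Phi$ and with the identity a homomorphism $\bB\to\bB^\ast$, so that $\bB\models\Phi$ by closure under inverse homomorphisms. \emph{Connectification:} put $\Phi'$ in CNF, then rewrite each clause whose canonical database is disconnected as a conjunction of clauses with connected canonical databases, one per component of the canonical database; one inclusion of model classes is immediate since each new clause implies the old one, and the other inclusion is exactly where closure under disjoint unions is used --- a failure of one of the new clauses in a witnessing expansion would, after taking sufficiently many disjoint copies of the structure, yield a violation of $\Phi'$ on a structure that is supposed to be a model.

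\emph{The main obstacle.} The delicate step is connectification. The canonical database of a clause records only its negative literals, so a positive literal of an existential predicate can tie together variables from several components of the canonical database, and then a component-clause may fail to be vacuous even though the original clause is vacuous in the expansion at hand; making precise that closure under disjoint unions nonetheless makes the component-wise rewriting sound --- and that the hypothesis cannot be relaxed to closure under inverse homomorphisms alone, which only yields a $\Forb_\to$-type description rather than a $\CSP$ --- is the technical heart of the proof. The remaining ingredients (the two closure proofs, the disjoint-union template, and the monotonization) are routine.
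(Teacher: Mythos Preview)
The paper does not give its own proof of this statement; it is quoted as Corollary~1.4.12 from~\cite{Book}, so there is no in-paper argument to compare against. Your overall strategy --- reducing both directions to the two semantic closure properties ``closed under inverse homomorphisms'' and ``closed under disjoint unions,'' then matching these with monotonicity and connectedness respectively --- is exactly the standard route taken in the cited reference.

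Your treatment of the $\Leftarrow$ direction and of the monotonization step is correct. The gap is in the connectification step. You write that ``a failure of one of the new clauses in a witnessing expansion would, after taking sufficiently many disjoint copies of the structure, yield a violation of $\Phi'$.'' This does not follow: if only $C_j$ fails in the witnessing expansion $\bB^+$, then in $(\bB^+)^m$ you still only know that $C_j$ fails in each copy, not that $C_1,\dots,C_m$ all fail; and assembling a violation of the original clause $C$ across the copies requires one violated assignment per component. The subtler point is that even if you obtain, for each $j$, a separate structure $\bB_j$ with a witnessing expansion $\bB_j^+$ in which $C_j$ fails, the union $\biguplus_j \bB_j^+$ need not itself be a \emph{witnessing} expansion for $\biguplus_j \bB_j$ (the cross-component positive existential literals may hold in whatever expansion does witness satisfaction), so you do not yet contradict $\Phi'$.

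One clean way to close this gap is to pass from the class of $\sigma$-expansions satisfying the universal first-order part of $\Phi'$ to its closure under disjoint unions; closure of $\MOD(\Phi')$ under disjoint unions ensures that this enlarged class of expansions still projects to $\MOD(\Phi')$, and a hereditary class closed under disjoint unions has only connected bounds. One must then verify that the resulting set of connected bounds is still finite (equivalently, that the enlarged class is described by a universal sentence), which uses that the original bounds are finitely many and that connected bounds of the closure embed bounds of the original class. You correctly flagged this as ``the technical heart''; the point is that your one-line sketch does not yet supply it.
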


\subsection{Polynomial-time reductions and subclasses of NP}
We consider
three kinds of polynomial-time reductions which we specify now. We say that a set
$L$ \emph{reduces in polynomial time} to a set $L'$ if there is a polynomial-time
computable function $f$ such that $x\in L$ if and only if $f(x)\in L'$; we also say that $f$ is a
\emph{polynomial-time many-one reduction}. More generally, 
we say that there is a \emph{disjunctive polynomial-time reduction} from $L$ to $L'$ if
there is a polynomial-time computable function $g$ which computes 
 a set of queries $g(x)$ such that $x\in L$ if and only if $y\in L'$ for some 
 $y\in g(x)$.
Similarly, if $g$ satisfies that $x\in L$ if and only if $y\in L'$ for
all $y\in g(x)$, 
we say that there is a \emph{conjunctive polynomial-time reduction} from $L$ to $L'$.
Finally, we say that there is a \emph{polynomial-time Turing-reduction} from $L$ to $L'$,
if there is a polynomial-time deterministic oracle machine $M$ that accepts $L$ using
$L'$ as an oracle. Clearly, if there exists a polynomial-time reduction from $L$ to $L'$,
then there are also disjunctive and conjunctive polynomial-time reductions, and, in turn,
if there exists a disjunctive or a conjunctive polynomial-time reduction from $L$ to $L'$,
then there is also a Turing-reduction from $L$ to $L'$. We say that $L$ and $L'$ are \emph{polynomial-time equivalent}
(\emph{disjunctive-}, \emph{conjunctive}-, and \emph{Turing-equivalent}, respectively) if 
there are polynomial-time reductions (disjunctive-, conjunctive-, and Turing-reductions, respectively)
from $L$ to $L'$ and from $L'$ to $L$.
All of these notions of reducibility also exist in the variant with \emph{log-space} instead of \emph{polynomial-time}; in this case we require the existence of a Turing machine which has access to a (read-only) input tape, a (write-only) output tape, and a work tape whose size is logarithmic in the size of the input. Note that any function which is log-space computable in this sense is also polynomial-time computable.

\section{Extensional ESO}
\label{sec:extensional-ESO}
The aim of this section is to present several families of computational
problems considered in the literature that are captured by extensional ESO.
First, we recall the definition of extensional ESO.
An ESO $\tau$-sentence $\Psi$ is called \emph{extensional} if for every existentially quantified
predicate $R$ there is a distinct symbol $R'\in \tau$ of the same arity such that $\Psi$
contains a conjunct $\forall \overline{x} \big (R'(\bar x) \Rightarrow R(\bar x) \big )$, and $R'$ does
not appear anywhere else in $\Psi$. In this case we say that $R$ \emph{extends} $R'$.

Actually, one can reinterpret extensional ESO as follows. Consider a
signature $\sigma$ which is the disjoint union of two signatures $\tau_1$ and $\tau_2$, 
and let $\phi$ be a first-order $\sigma$-sentence. 
On an input $\sigma$-structure, 
the task is to decide whether there is an extension $\bA'$ of $\bA$ where
we are not allowed to modify the interpretation of symbols $R\in \tau_1$, i.e., 
$A' = A$, $R(\bA') = R(\bA)$ for each $R\in \tau_1$, and $R(\bA) \subseteq R(\bA')$
for each $R\in \tau_2$. Clearly, these problems can be expressed with extensional ESO sentences.

\subsection{Surjective CSPs}
\label{ex:finite-CSPs}
Arguably, the most natural ESO sentence describing $k$-colourability has $k$ unary quantified
predicates $U_1,\dots, U_k$, and the first-order part states that every vertex belongs to exactly one $U_i$, 
and that each $U_i$ is an independent set. Such a sentence is not in extensional ESO, but  $k$-colourability
can be expressed in extensional ESO as follows.

\begin{example}[$k$-colouring problem]\label{ex:Kn-USNP}
    Let $E_1$  be a binary relational symbol. For every positive integer $k$, denote
    by $\phi_k$ the universal $\{E_1\}$-formula stating that $(V;E_1)$ is a loopless symmetric graph
    with no induced $K_1+K_2$ (the three-element graph which is formed as the disjoint union of an
    isolated vertex and an edge) and no $K_{k+1}$. Hence, $(V;E_1)\models \phi_k$ if and 
    only if $(V;E_1)$ is a \emph{complete $k$-partite} graph, i.e., there
    is partition $P$ of $V$ with at most $k$ parts such that there is an edge $uv \in E_1$ if and only
    if $u$ and $v$ lie in different parts of $P$. Now, consider the extensional ESO sentence
    \[
        \Phi:=\exists E_1\, \big (\phi_k \wedge \forall x,y\, (E(x,y)\Rightarrow E_1(x,y)) \big)
    \]
    Then a graph $(V;E)$ satisfies $\Phi$ if and only if there is an 
    edge set $E_1 \supseteq E$ such that $(V;E_1)$ is a complete $k$-partite graph. It thus follows
    that $(V; E)\models \Phi$ if and only if $(V; E)$ is $k$-colourable.
\end{example}

Let $\bA$ and $\bB$ be relational $\tau$-structures.
A \emph{full-homomorphism} is a homomorphism $f\colon \bA\to \bB$ such that for
every $r$-ary relation symbol $R$ and every $r$-tuple $\bar a$ of $A$ it is the
case that $\bar a \in R^\bA$ if and only if $f(\bar a)\in R^\bB$. In the following, 
will assume that $\tau$ is finite. We denote by 
$\CSP_{\full}(\bA)$ the class of finite structure $\bA$ that admit a full homomorphism to $\bB$. 

It is straightforward to observe that  $\bA$ maps
homomorphically to $\bB$ if and only
if there is an extension $\bA'$ of $\bA$ that admits a full-homomorphism to 
$\bB$. It was proved in~\cite{ballEJC31} that for every finite $\tau$-structure
$\bB$ there is a finite set $\calF$ of finite $\tau$-structures such that $\CSP_{\full}(\bB)$ is the class
of $\calF$-free finite structures. In particular, there is a first-order sentence $\phi$
such that $\CSP_{\full}(\bB)$ coincides with the finite models of $\phi$. Hence,
if $\tau = \{R_1,\dots, R_k\}$ is the signature of $\bB$, and $\phi'$ is obtained from $\phi$ by replacing
each symbol $R_i$ by $R_i'$, the CSP of $\bB$ is described by the extensional 
ESO sentence
\[
    \exists R_1',\dots, R_k' \big ( \bigwedge_{R\in \tau}  \forall \overline x \;
    (R(\bar x)\Rightarrow R'(\bar x)) \land \phi' \big ).
\]

\emph{Surjective CSPs} are the variant of CSPs where the task is to decide
whether there exists a surjective homomorphism $f\colon\bA\to\bB$ from a given 
input structure $\bA$ to a fixed template structure $\bB$. 
Similarly as
for CSPs one can show that for every finite structure $\bB$ the surjective CSP for $\bB$ is
in extensional ESO as follows. 
For a finite $\tau$-structure $\bB$, let $\delta_\bB$ denote the \emph{diagram}
of $\bB$, that is, the conjunctive quantifier-free formula whose variables 
$x_b$ are 
indexed by elements $b\in B$ and such that for each symbol $R\in \tau$
of arity $r$, the formula $\delta_{\bB}$ contains the conjunct $R(x_{b_1},\dots, x_{b_r})$ if 
$(b_1,\dots, b_r)\in R(\bB)$, and contains the conjunct
$\lnot R(x_{b_1},\dots, x_{b_r})$ otherwise. We denote by $\delta'_\bB$ the existential 
sentence obtained from the diagram $\delta_\bB$ by existentially quantifying each
variable and replacing each occurrence of $R \in \tau$ by $R'$. Now, using the arguments above, it follows that there is a surjective
homomorphism form $\bA$ to $\bB$ if and only if $\bA$ satisfies the extensional
ESO sentence
\[
    \exists R_1',\dots, R_k' \big (\bigwedge_{R\in \tau} \forall \overline x (
    R(\bar x)\Rightarrow R'(\bar x)) \land \phi' \land \delta'_\bB \big ).
\]

\subsection{Graph Sandwich Problems}
Graph Sandwich Problems were introduced by Golumbic, Kaplan, and Shamir in~\cite{golumbicJA19}. 
The \emph{sandwich problem} (SP) for a graph class ${\mathcal C}$ is the following computational problem.
The input is a pair of graphs $((V,E_1),(V,E_2))$ where $E_1\subseteq E_2$, and the task
is to decide whether there is an edge set $E$ such that $E_1\subseteq E\subseteq E_2$ and the
graph $(V,E)$ belongs to $\calC$. Equivalently, the SP for $\calC$ can receives as an
input a triple $(V,E,N)$ where $V$ is a set of vertices, $E$ is a set of edges, and $N$ is a set of non-edges.
The task is to determine whether there is a graph $(V,E') \in \calC$ such that $E\subseteq E'$ and 
$E'\cap N = \varnothing$.

Considerable attention has been placed on the SP for $\mathcal F$-free graphs for 
finite sets of graphs $\mathcal F$~\cite{alvaradoAOR280,dantasDAM159,
dantasENTCS346,figueiredoDAM251}. In particular, for every such finite
set $\mathcal F$, there is a universal sentence $\phi$ such that a $\{E\}$-structure $\bG$
satisfies $\phi$ if and only if $\bG$ is an $\mathcal F$-free graph.
Consider the binary signature $\{E,N\}$, and let $\phi'$ be the universal sentence
obtained by replacing each occurrence of a $E$ by $E'$, and a every occurrence of 
$\lnot E$ by $N'$. It is now straightforward to verify  that the SP for $\mathcal F$-free
graphs is described by the extensional ESO sentence
\begin{align*}
        \exists E', N'\; \forall x,y \big ( & (E(x,y)\Rightarrow E'(x,y)) \land (N(x,y)\Rightarrow N'(x,y)) \land (N'(x,y) \Leftrightarrow \lnot E'(x,y)) \land \phi' \big ).
\end{align*}

\subsection{Orientation Completion Problems}
An \emph{oriented graph} is a digraph $\bD = (V,A)$ with no symmetric
pair of edges, i.e., there are no $x,y \in V$ such that $(x,y) \in A$ and $(y,x) \in A$.
Orientation completion problems were introduced by Bang-Jensen,
Huang, and Zhu~\cite{bangjensenJGT87}.
Given a finite set of oriented graphs $\mathcal F$, the orientation 
completion to $\mathcal F$-free graphs is the following. The input
is a partially oriented graph $(\bG, A')$, i.e., a graph $\bG$ together with a set of edges  $A'\subseteq E(\bG)$
such that $(V(\bG),A')$ is an oriented graph. The task is to decide whether there exists an \emph{$\mathcal F$-free orientation completion of $(\bG, A)$}, i.e.,
an edge set $A$ such that 
\begin{itemize}
    \item $A'\subseteq A$, 
    \item $(G,A)$ is an oriented
graph that does not embed any $\bF\in \mathcal F$, and 
\item the symmetric
closure of $A$ equals $E(\bG)$. 
\end{itemize}
Clearly, for every finite set of oriented
graphs $\mathcal F$ there is a universal $\{A\}$-sentence $\phi$ that describes
the class of $\mathcal F$-free oriented graphs. Hence, the $\mathcal F$-free
orientation problem is captured by
\[
    \Phi:= \exists A\; \forall x,y \big ( (A'(x,y)\Rightarrow A(x,y)) ~\land~ ((A(x,y)\lor A(y,x)) \Leftrightarrow E(x,y)) ~\land~\phi \big ).
\]

\subsection{Constraint Topological Sorting}
A \emph{topological sort} of a directed acyclic graph $\bD$ is a linear ordering
$<$ of $D$ such that if $(x,y)\in E(\bD)$, then $x < y$.
\emph{Constraint topological sorting problems (CTSs)} arise in the context
of automata theory  and regular languages~\cite{amarilliICALP2018}. 
These problems are described
by a regular language $L$; the input is a directed acyclic graph $\bD$
whose vertices are labeled with letters of some alphabet $\Sigma$ and the task
is to decide whether there exists a topological sort of $\bD$ where the word obtained by reading the label of the vertices according
to the order $<$ belongs to the language $L$. Amarilli and Paperman conjecture
that CTS for regular languages exhibit an NL vs.\ NP-complete dichotomy~\cite[Conjecture~2.3]{amarilliICALP2018}. 
By considering a slight modification on the problem, they show that
such a dichotomy holds for aperiodic regular languages~\cite[Theorem~5.2]{amarilliICALP2018}. 
Aperiodic regular languages correspond to regular languages expressible
in first-order logic~\cite{straubingSIGLOG5} where the signature consists of a unary relation symbol
for each letter of the alphabet $\Sigma$ and a binary relation symbol $s$ encoding
the successor relation. Clearly, the successor relation can be first-order defined 
from the order relation, e.g., 
\[
s(x,y):= (x <y)\land (\forall z.\; (x<z<y \Rightarrow (z =x \lor z = y)).
\]
Hence, for every aperiodic regular language $L$ there is a first-order
$\Sigma~\cup \{<\}$-sentence $\phi$ that describes the language $L$. 
Therefore,  the CTS for $L$ is expressible in extensional ESO by
\[
    \exists{<}\; \big ( \forall x,y (E(x,y)\Rightarrow x < y) \land \phi \big ).
\]

\section{Extensional ESO and HerFO}
\label{sect:uSNP}
    Every problem in $\HerFO$ is polynomial-time equivalent to the complement
    of a problem described by monadic extensional ESO. This follows from 
    the following observation~\cite{ManuelSantiagoCSL}.
     
\begin{observation}\label{obs:FO-relative}
Consider a first-order $\tau$-formula $\phi:=Q_1x_1\dots Q_nx_n. \psi(x_1,\dots, x_n)$.
If $\psi(x)$ is a first-order $\tau$-formula, then there is a $\tau$-formula $\phi_\psi$
such that a $\tau$-structure $\bA$ satisfies $\phi_\psi$ if and only if the
substructure of $\bA$ with domain $\{a\in  A\colon \bA\models \psi(a)\}$ satisfies $\phi$. Namely, 
    \[
   \phi_\psi := \;  Q_1x_1\dots Q_nx_n \left ( \bigwedge_{i\in U}\psi(x_i) \implies \bigwedge_{i\in E}\psi(x_i)\land \phi(x_1,\dots, x_n) \right),
   \]
where $U$ (respectively, $E$) is the set of indices $i\in \{1,\dots,n\}$ such that $Q_i$ is a universal
(respectively, existential) quantifier. 
\end{observation}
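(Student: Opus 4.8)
The plan is to prove this as the classical \emph{relativization of quantifiers}, first exhibiting some correct $\phi_\psi$ and then checking that the displayed formula does the job. Write $\chi$ for the quantifier-free part of $\phi$, so that $\phi = Q_1 x_1\cdots Q_n x_n\,\chi(x_1,\dots,x_n)$ ($\phi$ is prenex, so $\chi$ is quantifier-free; otherwise one first converts $\phi$ to prenex normal form), set $S := \{a\in A : \bA\models\psi(a)\}$, and write $\bA[S]$ for the substructure of $\bA$ with domain $S$ --- well defined for non-empty $S$ since $\tau$ is relational, with $R^{\bA[S]}$ equal to the restriction of $R^{\bA}$ to tuples over $S$. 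First I would recall the standard recursive relativization $(\cdot)^\psi$ of first-order formulas: it fixes atomic formulas and Boolean connectives, sends $\forall x\,\theta$ to $\forall x\,(\psi(x)\Rightarrow\theta^\psi)$, and sends $\exists x\,\theta$ to $\exists x\,(\psi(x)\wedge\theta^\psi)$. A routine structural induction on $\theta$ gives the \emph{relativization lemma}: for every $\bA$ and every tuple $\bar a$ over $S$ one has $\bA\models\theta^\psi(\bar a)$ if and only if $\bA[S]\models\theta(\bar a)$. Applied to the prenex $\phi$ this already proves the statement, with $\phi^\psi = Q_1 x_1\cdots Q_n x_n\, M^\psi$, where $M^\psi = \psi(x_1)\circ_1(\psi(x_2)\circ_2(\cdots(\psi(x_n)\circ_n\chi)\cdots))$ is the \emph{nested} guarded matrix and $\circ_i$ is $\Rightarrow$ if $i\in U$ and $\wedge$ if $i\in E$. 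What remains is to pass from this nested matrix to the displayed \emph{merged} one.

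So the second step is to show that the displayed sentence $\phi_\psi := Q_1 x_1\cdots Q_n x_n\big(\bigwedge_{i\in U}\psi(x_i)\Rightarrow\big(\bigwedge_{i\in E}\psi(x_i)\wedge\chi\big)\big)$ is equivalent to $\phi^\psi$ on every structure $\bA$ with $S\neq\varnothing$. This is the one point where some care is needed: the merged matrix is \emph{not} propositionally equivalent to $M^\psi$ --- already for $\phi=\exists x_1\forall x_2\,\chi$ the merged matrix $\psi(x_2)\Rightarrow(\psi(x_1)\wedge\chi)$ is strictly weaker than $M^\psi=\psi(x_1)\wedge(\psi(x_2)\Rightarrow\chi)$ --- so the equivalence genuinely exploits the quantifier prefix together with the non-emptiness of $S$. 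I would establish it through the Hintikka model-checking game for the two prenex sentences, run in parallel. From a winning Verifier strategy in the $\phi$-game on $\bA[S]$ one builds one in the $\phi_\psi$-game on $\bA$ by copying moves as long as the $\forall$-player keeps all its moves inside $S$, and playing arbitrarily once it leaves $S$: in the latter case some guard $\psi(x_i)$ with $i\in U$ is false and the merged matrix holds vacuously, while in the former case the whole play lies in $S$, follows the winning strategy, hence satisfies $\chi$, and both guarded conjunctions are true. Conversely, given a winning Verifier strategy in the $\phi_\psi$-game on $\bA$, one checks that along any play in which the $\forall$-player uses only elements of $S$ every Verifier move is forced to lie in $S$: otherwise, completing the play with all remaining $\forall$-variables set to an arbitrary element of $S$ (which exists because $S\neq\varnothing$) makes $\bigwedge_{i\in U}\psi(x_i)$ true, hence $\bigwedge_{i\in E}\psi(x_i)$ true, contradicting the assumption; and then the final assignment satisfies $\chi$, i.e.\ Verifier wins the $\phi$-game on $\bA[S]$. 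Combining this equivalence with the relativization lemma yields $\bA\models\phi_\psi$ if and only if $\bA[S]\models\phi$ whenever $S\neq\varnothing$.

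The main obstacle --- and the only place where the argument is more than bookkeeping --- is exactly this non-emptiness requirement: both directions of the game translation rely on the $\forall$-player being able to complete a play within $S$, which fails when $S=\varnothing$, and in that case the displayed $\phi_\psi$ need not agree with ``$\bA[S]\models\phi$'' (indeed $\bA[S]$ is then not a structure under the convention that domains are non-empty). This is immaterial for every use of the observation, as $\HerFO$ ranges only over non-empty induced substructures; a version uniform in $\bA$ is obtained either by restricting to structures with $S\neq\varnothing$, or by disjoining $\phi_\psi$ with a sentence encoding the truth value of $\phi$ over the empty domain (namely, whether $Q_1=\forall$), neither of which I would carry out in detail.
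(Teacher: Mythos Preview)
The paper offers no proof for this observation: it simply displays the formula $\phi_\psi$ and treats its correctness as self-evident. Your argument is therefore considerably more careful than anything in the paper, and it is correct.

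In particular, you put your finger on a genuine subtlety that the paper elides: the displayed ``merged'' matrix $\bigwedge_{i\in U}\psi(x_i)\Rightarrow\bigl(\bigwedge_{i\in E}\psi(x_i)\wedge\chi\bigr)$ is \emph{not} propositionally equivalent to the standard nested relativization, so passing from the classical $\phi^\psi$ to the paper's $\phi_\psi$ does require an argument. Your Hintikka-game translation handles this cleanly; the key step---that against a $\forall$-player who stays inside $S$, any deviation of Verifier outside $S$ can be punished by completing the play inside $S$, forcing the antecedent true and hence the $\exists$-guards true---is exactly where non-emptiness of $S$ enters, and your reasoning there is sound. Your caveat about $S=\varnothing$ is also accurate (and harmless, since the paper's structures have non-empty domains and every use of the observation is for non-empty $S$). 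One cosmetic point: the paper's displayed formula writes $\phi(x_1,\dots,x_n)$ for what you (more sensibly) call $\chi$; this is just sloppy overloading in the paper, not a discrepancy in your proof.
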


We will use the formula $\phi_\psi$ from Observation~\ref{obs:FO-relative} in the following lemma
and in the proof of Theorem~\ref{thm:extESO->HerFO}.

\begin{lemma}\label{lem:HerFO-UMSO}
   Consider a first-order $\tau$-formula $\phi:=Q_1x_1\dots Q_nx_n. \psi(x_1,\dots, x_n)$. 
   Then, $\HER(\phi)$ and the complement of the problem described by the monadic extensional
    $(\tau\cup\{S\})$-sentence $\Psi$ are log-space equivalent, where 
   \[
   \Psi:= \exists \bar{S}. \;  Q_1x_1\dots Q_nx_n \left (\forall y (S(y) \Rightarrow \bar{S}(y)) \land  \bigwedge_{i\in E} \lnot \bar{S}(x_i) \land \big (\bigwedge_{i\in U} \lnot \bar{S}(x_i) \Rightarrow \psi(x_1,\dots, x_n) \big ) \right).
   \]
\end{lemma}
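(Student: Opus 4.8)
The goal is to establish a log-space reduction in both directions between $\HER(\phi)$ — the class of $\tau$-structures all of whose non-empty substructures satisfy $\phi$ — and the complement of the class described by $\Psi$. The key design idea behind $\Psi$ is that the monadic existential predicate $\bar S$ is meant to mark a set of ``deleted'' vertices: the structure induced on the non-deleted vertices $\{a \in A : a \notin \bar S^{\bA}\}$ should be exactly the witness substructure that \emph{violates} $\phi$. The input predicate $S$ is the extension target; since $\bar S$ extends $S$, any witness set must contain $S^{\bA}$, and in the intended application $S^{\bA}$ will be empty, so this constraint is vacuous on the relevant instances. I would first observe that the first-order part of $\Psi$, once $\bar S$ is fixed, is precisely the relativization $(\lnot\phi)_{\psi}$ from Observation~\ref{obs:FO-relative} applied with the defining formula $\neg \bar S(x)$ in place of $\psi$: the conjunct $\bigwedge_{i\in E}\lnot\bar S(x_i)$ forces existential witnesses to lie outside $\bar S$, and $\bigwedge_{i\in U}\lnot\bar S(x_i) \Rightarrow \psi$ restricts the universal quantifiers to the complement of $\bar S$, so that the whole first-order part says ``the substructure induced by $A \setminus \bar S^{\bA}$ fails $\phi$'' — wait, more precisely it says the substructure satisfies the body, and since we want $\Psi$ to capture ``some substructure violates $\phi$'' I must be careful: $\Psi$ as written has the quantifier prefix $Q_1\dots Q_n$ of $\phi$ itself, not of $\neg\phi$, so what $\Psi$ actually asserts (for fixed $\bar S$) is that the induced substructure satisfies $\phi$ — hmm. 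Let me re-read. Given the statement claims $\HER(\phi)$ equals the \emph{complement} of the problem described by $\Psi$, and $\HER(\phi)$ is a ``for all substructures'' statement, $\Psi$ should be the ``exists a substructure such that (something)'' statement whose negation gives back $\HER(\phi)$; so $\Psi$ must assert ``there exists a non-empty induced substructure on which the relativized body fails in the appropriate sense.'' The cleanest route is to show directly that $\bA \models \Psi$ iff there is a non-empty subset $A' \subseteq A$ with $S^{\bA} \subseteq A'$ such that the induced substructure $\bA[A']$ does \emph{not} satisfy $\phi$ — this is what the conjunction structure is engineered to express, reading $A' = A \setminus \bar S^{\bA}$ — and then handle the boundary cases.

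**The two directions.**

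For the forward reduction ($\HER(\phi)$ to the complement of $\Psi$), given an input $\tau$-structure $\bA$, I would produce the $(\tau \cup \{S\})$-structure $\bA^{+}$ which is $\bA$ together with $S^{\bA^{+}} := \varnothing$ (or the relevant trivially-satisfiable marking); this is log-space computable since it only appends an empty relation. I then claim $\bA \in \HER(\phi)$ iff $\bA^{+} \notin \MOD(\Psi)$. For the converse direction, given a $(\tau\cup\{S\})$-structure $\bC$, I would forget the $S$-relation to obtain a $\tau$-structure $\bC^{-}$; but here there is a subtlety: a genuine reduction must also correctly handle instances where $S^{\bC} \neq \varnothing$. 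If $S^{\bC}$ is non-empty, then every admissible $\bar S$ contains $S^{\bC}$, so the induced substructure lives on a \emph{proper} subset of $C$, and $\bC \in \MOD(\Psi)$ iff some substructure of $\bC^-$ avoiding the ``forced-out'' set $S^{\bC}$ violates $\phi$ — which is not literally $\bC^{-} \in \HER(\phi)$. I would resolve this by noting that the reduction is allowed to be a log-space \emph{many-one} reduction to $\HER(\phi)$ for a possibly different formula, or — more simply — by observing that since the forward direction already gives one inclusion of the equivalence and both problems are being related only up to log-space equivalence, I can define the reduction from $\MOD(\Psi)^{c}$ to $\HER(\phi)$ to act by: on input $\bC$, if $S^{\bC} = \varnothing$ output $\bC^{-}$; the instances with $S^{\bC}\neq\varnothing$ require deleting the forced vertices first, which is again log-space. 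The verification that these maps are correct reduces in each case to the structural claim isolated above together with the observation that a structure is in $\HER(\phi)$ iff \emph{no} non-empty induced substructure violates $\phi$ iff (contrapositive) the $\Psi$-witness $\bar S$ cannot be chosen.

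**The main obstacle and the key lemma.**

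The crux, and the step I expect to require the most care, is the structural equivalence: for all $(\tau\cup\{S\})$-structures $\bA$,
\[
\bA \models \Psi \iff \exists A' \subseteq A,\ A' \neq \varnothing,\ S^{\bA}\subseteq A',\ \text{such that}\ \bA[A'] \not\models \phi.
\]
Establishing ``$\Leftarrow$'': given such an $A'$, set $\bar S^{\bA} := A \setminus A'$; then $S^{\bA} \subseteq \bar S^{\bA \complement}$ wait — $S^{\bA} \subseteq A'$ means $S^{\bA} \cap \bar S^{\bA} = \varnothing$, so $\forall y(S(y) \Rightarrow \bar S(y))$ \emph{fails}, which is wrong. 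This tells me the intended reading is instead $\bar S$ marks the \emph{kept} vertices and $S \subseteq \bar S$ forces all $S$-vertices to be kept, while the relativization formula is $\bar S(x)$ (not its negation) — but the displayed $\Psi$ uses $\lnot\bar S(x_i)$. I must therefore reconcile these, and the resolution is that the quantifier prefix of $\Psi$ matches $\phi$ (a ``$\forall$-type'' statement when $\phi$ is universal) so that $\Psi$'s first-order part, read under the substitution, asserts that the substructure on $\{a : a \notin \bar S^{\bA}\} \cup S^{\bA}$ — equivalently, after noting $S \subseteq \bar S$ forces $S$-vertices \emph{out} — hmm, then $S^{\bA}$ must be empty for a witness to exist at all, which is fine for the application. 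So the honest plan is: (i) prove $\bA \models \Psi$ iff $S^{\bA} = \varnothing$ and $\exists$ non-empty $A' \subseteq A$ with $\bA[A'] \models (\text{body of }\phi\text{ under relativization})$, and since that body under the $\phi$-prefix is just ``$\bA[A'] \models \phi$'' — no, the prefix makes it say $\bA[A'] \models \phi$ only if the body is $\phi$'s matrix, which it is up to the relativization bookkeeping; (ii) conclude $\bA \models \Psi$ iff $S^{\bA}=\varnothing$ and \emph{not} every non-empty substructure of $\bA|_\tau$ satisfies $\phi$ — wait that requires the body to be $\neg$(matrix). I will pin down the exact polarity by tracking, in Observation~\ref{obs:FO-relative}, whether $\Psi$ should relativize $\phi$ or $\neg\phi$, and I expect the correct statement is that $\Psi$ relativizes $\neg\phi$ — i.e. $\bA \models \Psi$ iff there is a non-empty substructure of $\bA|_\tau$ (containing no $S$-vertex, hence $S^{\bA}=\varnothing$) failing $\phi$ — making $\lnot\Psi$ equivalent, on structures with $S^{\bA}=\varnothing$, to $\bA|_\tau \in \HER(\phi)$, which is exactly the claimed log-space equivalence once the easy coding of the empty $S$-relation and the handling of non-empty $S$ is dispatched. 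The delicate point throughout is getting the interaction of the fixed quantifier prefix $Q_1 \ldots Q_n$ with the monadic marker and the relativization to come out with the right polarity, and I would verify it by unwinding the $n=1$, $\phi = \forall x.\,\psi$ case by hand before asserting the general pattern.
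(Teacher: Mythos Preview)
Your overall approach matches the paper's: use the monadic predicate $\bar S$ to encode a set of deleted vertices, invoke Observation~\ref{obs:FO-relative} to relate the first-order part of $\Psi$ (for fixed $\bar S$) to the relativization $\phi_{\lnot\bar S}$ of $\phi$ to the substructure on $A\setminus\bar S$, map a $\tau$-structure to the $(\tau\cup\{S\})$-structure with $S$ interpreted as empty for one reduction, and for the other reduction pass from a $(\tau\cup\{S\})$-structure to the induced substructure on $A\setminus S^{\bA}$. These are precisely the two reductions the paper uses.

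Where you stall is the one place the argument has any content: pinning down what the first-order part of $\Psi$ asserts once $\bar S$ is fixed. You oscillate between ``the substructure satisfies $\phi$'' and ``the substructure fails $\phi$'', and between ``$S$-vertices are kept'' and ``$S$-vertices are deleted'', without settling either. The extension clause $S(y)\Rightarrow\bar S(y)$ forces $S^{\bA}\subseteq\bar S$, and since the relativization is to $\lnot\bar S$, the kept set is $A\setminus\bar S\subseteq A\setminus S^{\bA}$: $S$-vertices are \emph{excluded}, not included, and your momentary inference ``containing no $S$-vertex, hence $S^{\bA}=\varnothing$'' conflates ``the witnessing substructure avoids $S$'' with ``$S$ is globally empty''. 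The paper dispatches this in one sentence by asserting that $(\bA,\bar S)$ satisfies the first-order part of $\Psi$ iff $S^{\bA}\subseteq\bar S$, $A\not\subseteq\bar S$, and $\bA\models\phi_{\lnot\bar S}$; you should actually carry out the small-case check you promise (e.g.\ $\phi=\forall x.\psi$ and $\phi=\exists x.\psi$) rather than leave the polarity as ``I expect''. Once that equivalence is nailed down, the two log-space maps go through mechanically, exactly as you and the paper describe.
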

\begin{proof}
    First notice that if an expansion $\bA' := (\bA, \bar{S})$ satisfies the first-order part of $\Phi$
    and $\psi(x):= \lnot \bar{S}(x)$,
    then $(\bA, \bar{S})\models \phi_\psi$ (see Observation~\ref{obs:FO-relative} for the definition
    of $\phi_\psi$).
    Moreover, $(\bA, \bar{S})$ satisfies the first-order part of $\Phi$ if
    and only if $S^{\bA} \subseteq  \bar{S}^{\bA'}$, $A \not\subseteq \bar{S}^{\bA'}$, and $\bA'\models \phi_\psi$.
    Hence, the log-space reduction from $\HER(\phi)$ to $\lnot \Psi$ is straightforward:
    every $\tau$-structure $\bA$ is a $(\tau\cup\{S\})$-structure, and it thus follows via
    Observation~\ref{obs:FO-relative} and the definition of $\phi_\psi$ that $\bA\models \lnot\Psi$
    if and only if every non-empty substructure of $\bA$ satisfies $\phi$, %hereditarily satisfies $\phi$, 
    i.e., if and only if $\bA \in \HER(\phi)$. The converse reduction follows with similar arguments. Indeed, if $\bA$
    is a $(\tau\cup\{S\})$-structure,  then $\bA\models \lnot\Psi$ if and only if $S^\bA \neq A$ and 
    the substructure $\bB$ of $\bA$ with domain $A\setminus S^\bA$ models $\lnot\Psi$. Similarly as above,
    $\bB\models \lnot \Psi$ if and only if $\bB$ hereditarily satisfies $\phi$. Clearly, $\bB$ can be computed
    from $\bA$ in polynomial-time, and thus $\lnot\Psi$ reduces in polynomial time (and in logarithmic space)
    to $\HER(\phi)$. 
\end{proof}

Combining this lemma and Corollary~\ref{cor:notNPrich}, we get the following application of our results.

\begin{corollary}\label{cor:HerFO-notcoNPrich}
    $\HerFO$ is not $\coNP$-rich, unless E = NE.
\end{corollary}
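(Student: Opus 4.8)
The plan is to derive Corollary~\ref{cor:HerFO-notcoNPrich} by chaining together Lemma~\ref{lem:HerFO-UMSO} with the (yet-to-be-stated) Corollary~\ref{cor:notNPrich}, which asserts that extensional ESO is not NP-rich unless $\E=\NE$. The key observation is that ``$\HerFO$ is $\coNP$-rich'' is, by taking complements, the same assertion as ``the class of complements of $\HerFO$ problems is $\NP$-rich''. So the argument should be purely a translation between formalisms, with no new combinatorics.

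Concretely, I would proceed as follows. First, suppose for contradiction that $\HerFO$ is $\coNP$-rich: every problem in $\coNP$ is polynomial-time equivalent to a problem of the form $\HER(\phi)$ for some first-order $\phi$. Now take an arbitrary problem $L \in \NP$. Its complement $\overline{L}$ lies in $\coNP$, so by assumption $\overline{L}$ is polynomial-time equivalent to some $\HER(\phi)$. By Lemma~\ref{lem:HerFO-UMSO}, $\HER(\phi)$ is (log-space, hence polynomial-time) equivalent to the complement $\lnot\Psi$ of a problem $\Psi$ described by a monadic extensional ESO sentence. Chaining the two equivalences and taking complements on both sides --- polynomial-time many-one equivalence is preserved under complementation --- we get that $L$ is polynomial-time equivalent to the problem described by $\Psi$, which is in (monadic, a fortiori general) extensional ESO. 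Since $L\in\NP$ was arbitrary, this shows extensional ESO is $\NP$-rich, contradicting Corollary~\ref{cor:notNPrich} under the hypothesis $\E\neq\NE$.

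I would write this out in roughly four sentences. One small bookkeeping point to get right: Corollary~\ref{cor:notNPrich} presumably rules out $\NP$-richness of extensional ESO under $\E\neq\NE$, so the contrapositive gives exactly the conditional ``unless $\E=\NE$'' in the statement; I should make sure the notion of reducibility used (many-one polynomial-time, for which complementation is transparent) matches the one in the definition of ``NP-rich'' used in Corollary~\ref{cor:notNPrich}, and if that corollary is stated for a coarser notion (conjunctive/disjunctive/Turing) then the complementation step still goes through since a many-one reduction is in particular a reduction of each of those types, and Turing/conjunctive/disjunctive reducibility is also closed under complementing the target appropriately (conjunctive and disjunctive swap under complementation, Turing is symmetric).

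The main obstacle --- which is really just a matter of care rather than a genuine difficulty --- is tracking the direction of the reductions through the complementation, so that ``$\coNP$-rich for $\HerFO$'' translates cleanly into ``$\NP$-rich for extensional ESO'' rather than into something about $\coNP$ and co-extensional-ESO. Lemma~\ref{lem:HerFO-UMSO} is stated symmetrically (it gives an equivalence, in both directions, between $\HER(\phi)$ and $\lnot\Psi$), so this is unproblematic; the one thing to state explicitly is that the whole chain is polynomial-time (indeed log-space) computable, which it is since each link is, and polynomial-time reductions compose. Thus the proof is essentially a one-paragraph diagram chase, and I expect no technical surprises.

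\begin{proof}
Suppose for contradiction that $\HerFO$ is $\coNP$-rich, and let $L\in\NP$ be arbitrary. Then $\overline L\in\coNP$, so by assumption there is a first-order formula $\phi$ such that $\overline L$ is polynomial-time equivalent to $\HER(\phi)$. By Lemma~\ref{lem:HerFO-UMSO}, $\HER(\phi)$ is log-space (hence polynomial-time) equivalent to the complement $\lnot\Psi$ of a problem $\Psi$ expressible in (monadic) extensional ESO. Composing these equivalences, $\overline L$ is polynomial-time equivalent to $\lnot\Psi$, and taking complements (which preserves polynomial-time many-one equivalence) we conclude that $L$ is polynomial-time equivalent to the problem described by $\Psi$, which lies in extensional ESO. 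Since $L\in\NP$ was arbitrary, extensional ESO is $\NP$-rich, contradicting Corollary~\ref{cor:notNPrich} unless $\E=\NE$.
\end{proof}
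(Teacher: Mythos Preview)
Your proof is correct and follows exactly the approach the paper intends: the paper's own justification is the single sentence ``Combining this lemma and Corollary~\ref{cor:notNPrich}, we get the following application of our results,'' and your argument simply spells out the complementation bookkeeping that makes this combination work.
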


Now we use the fact that the tractability problem for HerFO is undecidable~\cite{ManuelSantiagoCSL}
to conclude that the tractability problem for extensional ESO is also undecidable (via Lemma~\ref{lem:HerFO-UMSO}).

\begin{corollary}\label{cor:tractability-problem}
    The tractability problem for (monadic) extensional ESO is undecidable.
\end{corollary}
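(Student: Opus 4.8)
The plan is to reduce the undecidability of the tractability problem for (monadic) extensional ESO to the known undecidability of the tractability problem for $\HerFO$, which is established in~\cite{ManuelSantiagoCSL}. The bridge between the two formalisms is Lemma~\ref{lem:HerFO-UMSO}: it gives, for every first-order $\tau$-formula $\phi$ (in prenex form), a monadic extensional $(\tau\cup\{S\})$-sentence $\Psi$ such that $\HER(\phi)$ and the complement of the problem described by $\Psi$ are log-space equivalent. Since log-space reductions are in particular polynomial-time reductions, $\HER(\phi)$ is polynomial-time solvable if and only if the problem described by $\lnot\Psi$ is, which in turn holds if and only if the problem described by $\Psi$ is (complementation preserves polynomial-time solvability). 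Hence the map $\phi \mapsto \Psi$ is a computable reduction from the tractability problem for $\HerFO$ to the tractability problem for monadic extensional ESO.

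First I would recall precisely what the tractability problem for a logic $\mathcal L$ means: given a sentence in $\mathcal L$, decide whether the class of finite structures it describes is polynomial-time decidable. Then I would observe that the assignment $\phi \mapsto \Psi$ described in Lemma~\ref{lem:HerFO-UMSO} is syntactic and therefore trivially computable, and that it lands inside monadic extensional ESO (the quantified predicate $\bar S$ is unary, and the required implication conjunct $\forall y\,(S(y)\Rightarrow \bar S(y))$ is present, with $S$ not occurring elsewhere). Next I would verify the equivalence of the decision problems: if $\phi$ is a \emph{yes}-instance of the tractability problem for $\HerFO$, i.e.\ $\HER(\phi)$ is in $\P$, then by the log-space equivalence of Lemma~\ref{lem:HerFO-UMSO} the complement of the problem described by $\Psi$ is in $\P$, hence so is the problem described by $\Psi$, so $\Psi$ is a \emph{yes}-instance; and conversely. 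This shows that a decision procedure for the tractability problem for monadic extensional ESO would yield one for $\HerFO$, contradicting~\cite{ManuelSantiagoCSL}.

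Finally I would note that the statement for \emph{monadic} extensional ESO immediately implies the statement for (unrestricted) extensional ESO, since monadic extensional ESO is a syntactic fragment of extensional ESO and any decision procedure for the tractability problem on the larger class restricts to one on the smaller class. I do not expect any serious obstacle here: the only point requiring a modicum of care is making sure that the reduction of Lemma~\ref{lem:HerFO-UMSO} is invoked in the correct direction and that complementation does not affect membership in $\P$ — both of which are routine. The real content has already been offloaded to Lemma~\ref{lem:HerFO-UMSO} and to the undecidability result for $\HerFO$ from~\cite{ManuelSantiagoCSL}, so the proof is essentially a one-line composition of a computable syntactic translation with a known undecidability fact.
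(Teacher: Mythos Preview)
Your proposal is correct and follows essentially the same approach as the paper: the paper's proof is the one-line observation that undecidability of the tractability problem for $\HerFO$ transfers via Lemma~\ref{lem:HerFO-UMSO}, and you have simply spelled out the routine details (computability of $\phi\mapsto\Psi$, closure of $\P$ under complementation, and that the monadic case implies the general one).
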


\subsection{HerFO captures extensional ESO}
\label{sect:usnp-herfo}
Now we see that for every problem in extensional ESO, there is a polynomial-time equivalent
problem in $\HerFO$. That is, for every extensional ESO $\tau$-sentence $\Psi$ 
we propose a first-order $\sigma$-formula $\phi$  such that
$\neg \Psi$ is polynomial-time equivalent to $\HER(\phi)$. First note that $\neg \Psi$
can be viewed as a universal second-order sentence. The idea for one reduction is that
given a $\tau$-structure $\bA$, we construct a $\sigma$-structure $\bB$
such that every expansion of $\bA$ is encoded by some substructure of $\bB$, 
and every substructure of $\bB$ either encodes an expansion of $\bA$, or it trivially
satisfies $\phi$. We illustrate a construction of $\bB$ in Figure~\ref{fig:extESO->HerFO}.
Naturally,  we will define $\phi$ in such a way that a substructure $\bB'$ of $\bB$
encoding an expansion $\bA'$ of $\bA$ satisfies $\phi$ if and 
only if $\bA'$ satisfies the first-order part of $\lnot \Psi$.
Hence, we obtain a polynomial-time reduction from $\lnot\Psi$ to $\HER(\phi)$.
For the remaining reduction we need to know how to handle $\sigma$-structures that are not
in the image of the previous construction. This will also be taken care of by $\phi$: we will include some clauses
in $\phi$ that will allow us to find a substructure $\bB_1$ of $\bB$ such that
$\bB$ hereditarily satisfies $\phi$ if and  only if
$\bB_1$ hereditarily satisfies $\phi$, and $\bB_1$ is constructed (as before) from some
$\tau$-structure $\bA$. This will yield a polynomial-time reduction from $\HER(\phi)$ to $\lnot \Psi$.

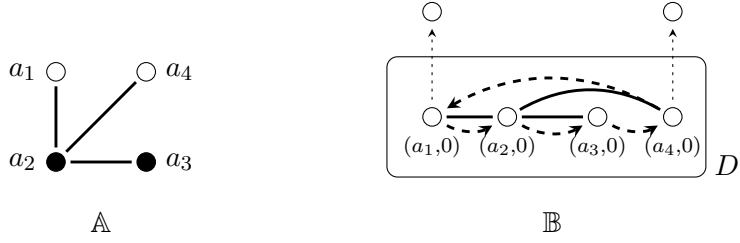
\begin{figure}[ht!]
\centering
\begin{tikzpicture}

  \begin{scope}[xshift = -3cm, scale = 0.8]
    \node [vertex, label = left:$a_1$] (1) at (-.75,.75) {};
    \node [vertex, fill = black, label = left:$a_2$] (2) at (-.75,-.75) {};
    \node [vertex, fill = black,  label = right:$a_3$] (3) at (.75,-.75) {};
    \node [vertex, label = right:$a_4$] (4) at (.75,.75) {};
    \node at (0,-1.75) {$\bA$};

    \foreach \from/\to in {1/2, 2/3, 2/4} 
    \draw [edge] (\from) to (\to);
  \end{scope}

  \begin{scope}[xshift = 3cm, scale = 0.8]
    \node [vertex, label = {below:$\scriptstyle  (a_1,0)$}] (1) at (-2,0) {};
     \node [vertex, label = {below:$\scriptstyle  (a_2,0)$}] (2) at (-0.75,0) {};
     \node [vertex, label = {below:$\scriptstyle  (a_3,0)$}] (3) at (0.75,0) {};
     \node [vertex, label = {below:$\scriptstyle   (a_4,0)$}] (4) at (2,0) {};
     \node at (0,-1.75) {$\bB$};

    \draw[rounded corners] (-2.75,-1) rectangle (2.55,1);
    \node at (2.9,-0.8) {$D$};

    \node [vertex] (s1) at (-2,1.75){};
    \node [vertex] (s4) at (2,1.75){};
      
    \foreach \from/\to in {1/2, 2/3} 
    \draw [edge] (\from) to (\to);
    \foreach \from/\to in {1/s1, 4/s4} 
    \draw [arc, dotted, thin] (\from) to (\to);
    \draw [edge] (2) to [bend left] (4);
     \foreach \from/\to in {1/2, 2/3, 3/4, 4/1} 
    \draw [arc, dashed] (\from) to [bend right] (\to);
  \end{scope}
  
\end{tikzpicture}
\caption{Let $R$ be a binary and $U'$ be a unary relation symbol. Consider an extensional ESO $\{R,U'\}$-formula
$\Phi$ of the form $\exists U \forall x ((U'(x) \Rightarrow U(x)) \land \phi)$
where $\phi$ is a first-order $\{R,U\}$-formula. On the left, we depict an $\{R,U'\}$-structure
where the interpretation of $U'$ is indicated by black vertices, and the interpretation of $R$ is indicated by 
undirected  edges.  On the right, we depict the structure $\bB$
with signature 
$\{R, D, E, <, S_U\}$, where $D^{\bB}$ is indicated by the vertices inside the rectangle, $E^{\bB}$ is indicated by bended dashed directed edges,
$<^{\bB}$ is the strict left-to-right linear order on $A \times \{0\}$, and the interpretation of $S_U$ is indicated by straight dotted arcs.
Notice that every substructure $\bB'$ of $\bB$ either does not contain a directed $E$-cycle, or it encodes an expansion of $\bA$
by a unary predicate $U$ where $U(x)$ if and only if $\bB'\models\lnot \exists y. S_U(x,y)$. 
Moreover, this expansion satisfies the clause
$\forall x (U'(x) \Rightarrow U(x))$. We use these facts in the proof of Theorem~\ref{thm:extESO->HerFO}.}
\label{fig:extESO->HerFO}
\end{figure}

\begin{theorem}\label{thm:extESO->HerFO}
    For every sentence $\Psi$ in extensional $\ESO$ there exists a first-order formula $\phi$ 
    such that the problem described by $\neg \Psi$ is log-space equivalent to $\HER(\phi)$.
\end{theorem}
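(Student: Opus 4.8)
The plan is to construct, from the given extensional $\ESO$ sentence $\Psi$, a single first-order sentence $\phi$ and to establish the two log-space reductions separately. First normalise $\Psi$ to the form
$\exists R_1,\dots,R_k\,\big(\bigwedge_{i}\forall\bar x\,(R_i'(\bar x)\Rightarrow R_i(\bar x))\wedge\phi_0\big)$,
where $\phi_0$ is a first-order $\sigma$-sentence in which no symbol $R_i'$ occurs. Then a $\tau$-structure $\bA$ satisfies $\neg\Psi$ if and only if \emph{every} expansion $\bA^*$ of $\bA$ with $R_i^{\bA^*}\supseteq (R_i')^{\bA}$ for all $i$ satisfies $\neg\phi_0$; call such an expansion \emph{valid}. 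The key point exploited throughout is that a valid expansion automatically satisfies the ``extension clauses'', so the first-order sentence we build only has to encode $\neg\phi_0$, not the full negated first-order part of $\Psi$.

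For the reduction $\neg\Psi\le_{\log}\HER(\phi)$: given a $\tau$-structure $\bA$, whose elements we order by the input encoding, build the $\sigma'$-structure $\bB$ of Figure~\ref{fig:extESO->HerFO}, where $\sigma'$ contains $\tau$ together with a unary symbol $D$, binary symbols $<$ and $E$, and, for each $i$, an $(r_i{+}1)$-ary symbol $S_i$. The domain of $\bB$ consists of a copy $D$ of $A$ together with one fresh ``sink'' element for each $i$ and each tuple $\bar a\in A^{r_i}\setminus (R_i')^{\bA}$; $D$ is marked by the symbol $D$, $<$ is the induced linear order on $D$, $E$ is the directed cycle on $(D,<)$ (the gadget), each $R\in\tau$ keeps its $\bA$-interpretation on $D$, and $S_i$ joins each $\bar a$-copy to its sink. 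This is log-space computable. Now set
\[
\phi:=\gamma\;\vee\;\widetilde{\neg\phi_0},
\]
where $\gamma$ is the first-order sentence ``$D$ is empty, or some element of $D$ has no $E$-successor inside $D$'', and $\widetilde{\neg\phi_0}$ is $\neg\phi_0$ with every quantifier relativised to $D$ and every atom $R_i(\bar y)$ (for the quantified $R_i$) replaced by $\neg\exists z\,S_i(\bar y,z)$. A nonempty substructure $\bB'$ of $\bB$ either omits a vertex of $D$ (or has $D^{\bB'}=\varnothing$), in which case, since $E$ restricted to $D$ is a single directed cycle, some element of $D^{\bB'}$ has no $E$-successor in $D^{\bB'}$ and so $\bB'\models\gamma$; or it contains all of $D$, in which case it is $\bB$ with some sinks removed and encodes the valid expansion $\bA^*$ of $\bA$ with $R_i^{\bA^*}=\{\bar a:\bB'\not\models\exists z\,S_i(\bar a,z)\}$, every valid expansion arising this way exactly once. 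For substructures of the second kind, $\gamma$ fails and $\bB'\models\widetilde{\neg\phi_0}$ iff $\bA^*\models\neg\phi_0$. Hence $\bB\in\HER(\phi)$ iff every valid expansion of $\bA$ satisfies $\neg\phi_0$, i.e.\ iff $\bA\models\neg\Psi$.

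For the converse $\HER(\phi)\le_{\log}\neg\Psi$ we must handle arbitrary $\sigma'$-structures, almost none of which are in the image of the construction above. Following the outline before the theorem, enlarge $\phi$ by further first-order disjuncts $\delta$ that flag every way in which a would-be encoding can be malformed: ``$<$ restricted to $D$ is not a strict linear order'', ``$E$ restricted to $D$ is not the directed cycle of $<$ restricted to $D$'', ``two distinct non-$D$ elements are $S$-indistinguishable'', ``some non-$D$ element is an $S_i$-sink of two distinct tuples, or is an $S_i$-sink and an $S_j$-sink with $i\ne j$'', and so on, chosen so that no genuine encoding (a second-kind substructure above) satisfies $\delta$. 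Then the only substructures of an arbitrary input $\bB$ that can fail $\phi$ are ``locally well-formed'', and from this one shows that $\bB$ reduces in log-space to a substructure $\bB_1$ that is an honest encoding of some $\tau$-structure $\bA$ with $\bB\in\HER(\phi)$ iff $\bB_1\in\HER(\phi)$; the first part then gives $\bB_1\in\HER(\phi)$ iff $\bA\models\neg\Psi$.

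I expect the main obstacle to be precisely this last step: choosing a finite list of guard disjuncts $\delta$ that is simultaneously complete (no non-encoding substructure can fail $\phi$ for a reason other than a genuine $\widetilde{\neg\phi_0}$-violation on some fixed $D$) and sound (never triggered on a real encoding), and then verifying by a case analysis that every residual piece of ``garbage'' in $\bB$ — stray elements, redundant sinks, partial gadgets — is either inert under $\HER(\phi)$ or recognisably fatal, so that $\bB_1$ and $\bA$ can be extracted in logarithmic space. The forward reduction, by contrast, is essentially the content of Figure~\ref{fig:extESO->HerFO} and should be routine once $\phi$ is in place.
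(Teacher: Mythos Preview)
Your forward reduction is essentially the paper's: the encoding $\bB$, the cycle gadget on $D$, the replacement of $R_i(\bar y)$ by $\neg\exists z\,S_i(\bar y,z)$, and the disjunct $\gamma$ playing the role of the paper's $\phi_2$ are all the same. That direction is fine.

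The backward reduction, however, has a genuine gap, and it stems from your choice to add \emph{disjunctive} malformedness guards $\delta$ rather than \emph{conjunctive} universal well-formedness guards. With $\phi=\gamma\lor\widetilde{\neg\phi_0}\lor\delta$, a substructure $\bB'$ can fail $\phi$ only if it is itself well-formed (fails $\delta$) and contains a full $E$-cycle on $D^{\bB'}$ (fails $\gamma$). But nothing prevents an arbitrary input $\bB$ from containing \emph{several} such substructures with pairwise different $D$-sets. Concretely, take $D^{\bB}=\{a_1,a_2,b_1,b_2\}$ with $a_1<a_2<b_1<b_2$ and $E^{\bB}$ consisting of the two $2$-cycles $a_1\to a_2\to a_1$ and $b_1\to b_2\to b_1$. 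On $\bB$ your disjunct ``$E$ restricted to $D$ is not the directed cycle of $<$'' fires, but on each of the two induced substructures on $\{a_1,a_2\}$ and on $\{b_1,b_2\}$ it does not, and neither does $\gamma$. These two substructures encode two unrelated $\tau$-structures $\bA_1,\bA_2$, and $\bB\in\HER(\phi)$ iff \emph{both} $\bA_1,\bA_2\models\neg\Psi$. No log-space many-one reduction can pick a single output $\bA$ here without already solving the problem; at best you get a conjunctive reduction. Your stated goal of extracting ``an honest encoding $\bB_1$'' with $\bB\in\HER(\phi)\Leftrightarrow\bB_1\in\HER(\phi)$ is therefore not achievable from disjunctive guards alone.

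The paper avoids this by taking $\phi=(\phi_1\lor\phi_2)\land\phi_3\land\phi_4\land\phi_5\land\phi_6$, where $\phi_3,\dots,\phi_6$ are \emph{universal} well-formedness sentences appearing as \emph{conjuncts}. Two things follow: (i) if $\bB$ itself violates any of them, then $\bB\notin\HER(\phi)$ and the reduction returns a fixed structure that satisfies $\Psi$; (ii) if $\bB$ satisfies them, so does every substructure. The crucial conjunct is $\phi_6$: $<$ is a strict linear order on $D$ with at most one $E$-edge going backwards. Together with $\phi_5$ (in/out-degree $\le 1$) this forces at most one $E$-cycle in $D^{\bB}$, which pins down a unique domain $X_1$ and hence a single $\tau$-structure $\bA$. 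Your list of guards actually contains the right ingredients; the fix is to negate them into universal sentences and conjoin them to $\gamma\lor\widetilde{\neg\phi_0}$ rather than disjoin them.
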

\begin{proof}
    Let $\tau$ be the input signature of $\Psi$. We may assume that $\Psi$ is of the form 
$$\exists R_1,\dots,R_n \, Q_1 x_1 \dots Q_m x_m. \psi$$ for some quantifier-free prefix $Q = (Q_1 \dots Q_m)$ and quantifier-free formula $\psi$ in conjunctive normal form.
    For each $i\in [n]$ let $R'_i$ be an $r_i$-ary predicate in
    $\tau$ such that $\psi$ contains the clause $R'_i(y_1,\dots, y_{r_i}) \Rightarrow R_i(y_1,\dots, y_{r_i})$
    where each $y_i$ is a universally quantified variable, and the symbol $R'$ does not
    appear in any other clause of $\psi$. We denote by $\tau'$ the subset of $\tau$ consisting of
    $R_i'$ for every $i\in[n]$.
 
    We first consider the trivial cases where either all finite structures satisfy $\Psi$, or no
    finite structure satisfies $\Psi$.  In the former case, we may pick $\phi$ to be the formula
    that is always false, and in the latter, we pick the formula that is always true. 
   Then we have a trivial reduction from $\HER(\phi)$ to $\neg \Psi$ and vice versa. 

    Let $\rho$ be the signature that contains $\tau$, a unary symbol $D$, two binary symbols $E$ and $<$,
    and for each $i\in[n]$ a symbol $S_i$ of arity $r_i+1$ (as we will see below, these symbols $S_i$ will
    encode the existentially quantified symbol $R_i$).
    Let $\phi$ be the formula $(\phi_1 \lor \phi_2) \land \phi_3 \land \phi_4 \land \phi_5 \land \phi_6$ defined as follows.
    \begin{enumerate}
        \item We define $\phi_1$ as follows: first, let $\chi: = \lnot (Q_1 x_1 \dots Q_m x_m. \psi)$ and $\delta(x):= D(x)$; secondly, 
        let $\chi' : = \chi_\delta$ (see
        Observation~\ref{obs:FO-relative}); finally, let $\phi_1$ be the formula obtained from $\chi'$ by replacing occurrences
        of $R_i(y_1,\dots, y_r)$ by $\forall z. \lnot S_i(y_1,\dots, y_r,z)$. It will also be convenient
        to denote by $\phi_1'(x_1,\dots, x_m)$ the formula where the bounded variables $x_1,\dots, x_m$ are
        now free variables (but the universally quantified variables $z$ remain bounded). 
        \item Let $\phi_2$ be the formula stating that there is some $y$ such that for all $x_1,\dots, x_m$
        the formulas $\lnot E(x_1,y)$ and  $\lnot S_i(x_1,\dots, x_{r_i},y)$ hold for all $i\in[n]$.
        \item Let $\phi_3$ be the formula that asserts that if
        $S_i(\overline{x},y)$ holds, then $y$ does not appear in any other tuple of the structure.
        \item Let $\phi_4$ be the formula stating that if $E(x,y)$ holds, then $D(x)$ and $D(y)$ hold. 
        \item Let $\phi_5$ be the formula stating that the in-degree and out-degree of every vertex
        with respect to  the relation $E$ is at most $1$.
        \item Let $\phi_6$ be the formula stating that $<$ is a strict linear order on the subset $D$
        such that there is at most one edge (of $E$) \textit{going backwards with respect to $<$}, i.e.,
        \[
        \forall x,y,z,w\in D \big ( (x < y \land z < w \land E(y,x) \land E(w,z)) \Rightarrow (x = z \land y = w) \big ).
        \]
    \end{enumerate}
    We claim that  $\neg \Psi$ and $\HER(\phi)$ are polynomial-time equivalent.

    For the polynomial-time reduction from $\neg \Psi$ to $\HER(\phi)$ consider a finite $\tau$-structure
    $\bA$ with domain $A = \{a_{0},\dots, a_{\ell-1}\}$. We first construct an auxiliary structure $\bF$
    with domain $$A\times \{0\} \cup A^{r_1}\times \{1\} \cup \cdots \cup A^{r_n}\times \{n\}$$ such that 
    \begin{itemize}
        \item 
        the map $a \mapsto (a,0)$ is an isomorphism between $\bA$ and the $(\tau\setminus \tau')$-reduct
        of the substructure of $\bF$ with domain $A\times\{0\}$,
        \item $D(x)$ holds for every $x\in A\times \{0\}$,
        \item $E((a_{i-1},0),(a_i,0))$ holds for every $i \in [\ell]$ (where indices are modulo $\ell$),
        \item $(a_i,0) < (a_j,0)$ if and only if $i < j$, for every $i,j\in \{0,\dots, \ell-1\}$, 
        \item $S_i((a_{j_1},0),\dots, (a_{j_{r_i}},0),(a_{j_1},\dots, a_{j_{r_i}},i))$ holds for all $i\in [n]$ and all
        $a_{j_1},\dots, a_{j_{r_i}} \in A$, and 
        \item no other relations hold in $\bF$. 
    \end{itemize}
    Now, we consider the substructure $\bB$ of $\bF$ with domain 
    \[
    B:=\{(a,0)\mid a\in A\} \cup \{(\overline{a},i) \mid \bA\models \lnot R_i'(\overline{a})\}.
    \]
    Clearly, $\bB$ can be computed from $\bA$ in polynomial time and logarithmic space ---
    we provide an illustration of this construction 
    in Figure~\ref{fig:extESO->HerFO}.
    We have to argue that $\bA \models \neg \Psi$ if and only if 
    all substructures of $\bB$ satisfy $\phi$. Note that 
    $\bB$ satisfies $\phi_3$, $\phi_4$, $\phi_5$, and $\phi_6$ by construction.
    Since $\phi_3 \land \phi_4\land\phi_5 \land \phi_6$ is a universal
    sentence,  all substructures of $\bB$ satisfy $\phi_3\land \phi_4 \land \phi_5 \land \phi_6$.
    Now, suppose that a  substructure $\bB'$ of $\bB$ does not contain $(a_i,0)$ for some
    $i \in \{0,\dots,\ell-1\}$. First consider the case that  $(a_j,0)\in B'$ for some $j\in\{0,\dots, \ell-1\}$.
    It follows that $\bB'$ satisfies $\phi_2$,
    because the vertex $(a_k,0)$ where $k$ is the first index $i'$
    from $i,i+1,\dots, j$ (where indices are modulo $\ell$) such that $(a_{i'},0)$ belongs to the domain of
    $\mathbb B'$  provides a witness for the existentially quantified variable $y$ in $\phi_2$ (the conjuncts
    $\lnot S_i(x_1,\dots, x_{r_i},y)$ are satisfied by $(a_k,0)$ because no element from $A\times \{0\}$ is the last
    coordinate of some tuple in $S_i^\bB$).
    If
    $B'\cap (A\times\{0\}) = \varnothing$, then any element $b\in B'$ is a witness for the existentially quantified variable
    $y$ in $\phi_2$. This proves that if $A\times\{0\}\not\subseteq B'$, then $\bB'$ satisfies $\phi_2$.
    Conversely, if $A\times \{0\} \subseteq B'$, then 
    $\bB'$ does not satisfy $\phi_2$. Therefore, we have to argue that
    $\bA \models \neg \Psi$ if and only if  all substructures $\bB'$ of $\bB$  with
    $A\times \{0\}\subseteq B'$ satisfy $\phi_1$.

    To see this, consider the following mapping $e$ from substructures $\bB'$ of $\bB$ with
    $A\times \{0\}\subseteq B'$ to expansions of $\bA$. For such a $\bB'$, let $\bC := e(\bB')$ be the expansion
    of $\bA$ where $R_i(x_1,\dots, x_{r_i})$ holds if $(x_1,\dots,x_{r_i},i) \not\in B'$,
    equivalently, if $\lnot S_i((x_1,0),\dots, (x_{r_i},0),y)$ holds in $\bB'$ for all $y\in B'$.
   It is straightforward to verify that $e$ is injective. Since $\bB'$ is a substructure of 
   $\bB$, then $\bC$ satisfies the clause $R'(\bar x)\Rightarrow R(\bar x)$ for every $R\in \tau$. 
   Moreover, the image of $e$ is exactly the set of expansions of $\bA$ that 
   satisfy $R'(\bar x)\Rightarrow R(\bar x)$ for every $R\in \tau$. Hence, we regard $e$ as a bijection
   between substructures of $\bB$ containing $A\times \{0\}$ and the expansions of $\bA$ satisfying 
   $R'(\bar x)\Rightarrow R(\bar x)$ for every $R\in \tau$. 

    It follows from the definition of $\phi_1$ and of $\bC$ that for every  $a_{j_1},\dots,a_{j_m} \in A$
    \[
    \bC \models \neg \psi(a_{j_1},\dots,a_{j_m}) \text{ if and only  if } \bB' \models \phi_1'((a_{j_1},0),\dots,(a_{j_m},0)).
    \]
    Then, since $A\times\{0\}\subseteq B'$, it 
    follows that $\bB' \models \phi_1$ if and only if $\bC\models \neg (Q_1x_1\dots Q_mx_m. \psi)$.
    Therefore, if $\bA \models \neg \Psi$ and $\bB'$ is a substructure of $\bB$ with $A\times\{0\}\subseteq B'$, 
    then $\bC \models \neg(Q_1x_1\dots Q_mx_m.\psi)$ and thus $\bB' \models \neg\phi_1$. This implies
    that if $\bA \models \neg \Psi$, then $\bB \in \HER(\phi)$. 
    Conversely, suppose $\bA \models \Psi$, and let $\bC$ be an $\{R_1,\dots,R_n\}$-expansion  of $\bA$
    such that $\bC\models Q_1x_1\dots Q_mx_m.\psi$. In particular, $\bC$ satisfies
    $R'(\bar x)\Rightarrow R(\bar x)$. 
    Hence, there is a substructure $\bB'$ of $\bB$ such that $e(\bB') = \bC$. 
    By the previous arguments we conclude that $\bB'\models \neg \phi_1$, and since
    $A\times\{0\}\subseteq B'$, we see that  $\bB'\models \neg \phi_2$, and so $\bB'\models \neg \phi$.
    Therefore, $\bA\models \neg\Psi$ if and only if $\bB\in \HER(\phi)$, and since
    $\bB$ can be constructed in logarithmic space from $\bA$, we have a
    log-space reduction from $\neg \Psi$ to $\HER(\phi)$.

    We now present a log-space reduction from $\HER(\phi)$ to $\neg \Psi$. Recall
    that there are structures $\bD$ and $\bF$ such that $\bD\models \Psi$ and
    $\bF\models\lnot \Psi$ --- we fix $\bD$ and $\bF$ for this reduction. 
    Let $\bB$ be a finite $\rho$-structure. If $\bB$ does not satisfy
    $\phi_3 \land \phi_4 \land\phi_5 \land \phi_6$, then the reduction returns $\bD$.
    Otherwise, let $X\subseteq B$ be the set of elements
    $x$ for which there is no $\overline {w}$ such that $S_i(\overline{w},x)$ holds for some
    $i\in[n]$. In particular, for every element $b$ in $B\setminus X$, there is some tuple
    $\overline w$ such that $\bB\models S_i(\overline w, b)$ for some $i\in[n]$, and since $\bB$
    satisfies $\phi_3$, it follows that every element of $\bB$ that belongs to some tuple of $E$ belongs to $X$. 
    Also, since $\bB$ satisfies $\phi_5$, the $\{E\}$-reduct of the substructure $\bX$ of $\bB$ with domain
    $X$ is a disjoint union of directed paths and directed cycles. Similarly as above, one can
    note that if there is no directed cycle, then all substructures of $\bB$ satisfy $\phi_2$, and hence
    $\bB$ hereditarily satisfies $\phi$. In this case, the reduction returns the fixed $\tau$-structure $\bF$
    that does not model $\Psi$.
    Otherwise, let $X_1,\dots, X_k$ be the subsets of $X$ inducing a directed cycle in the $\{E\}$-reduct of $\bX$.
    We claim that $k = 1$.
    First note that if $x\in X_i$ for some $i\in [k]$,
    then $\bB\models D(x)$, because $\bB$ satisfies $\phi_4$.  Hence, the union $\bigcup_{i\in[k]}X_i \subseteq D$ is linearly
    ordered by $<$, because $\bB$ satisfies $\phi_6$. Moreover,  $\phi_6$ also guarantees that there is at most one edge
    going backwards, i.e., an edge $(y,x)\in E$ such that $x < y$. Clearly, for every linearly ordered
    directed cycle $X_i$ there is at least one edge going backwards, and so $k = 1$.
    We construct a $\tau$-structure $\bA$ with domain $X_1$ as follows. 
    \begin{itemize}
    \item We start from the substructure of $\bB$ with domain $X_1$. 
    \item For every $i \in \{1,\dots,n\}$, add ($x_1,\dots, x_{r_i})$ to the relation $R'_i$
        whenever there is no $y \in B$ such that $S_i(x_1,\dots, x_{r_i},y)$.
    \item Finally, $\bA$ is the $\tau$-reduct of the resulting structure.
    \end{itemize}
    Clearly, $\bA$ can be computed from $\bB$ in logarithmic space. 
    Now, we argue that $\bA \models \neg \Psi$ if and only if every substructure
    of $\bB$ satisfies $\phi$. Since $\bB\models\phi_3\land\phi_4\land \phi_5 \land \phi_6$ and
    $\phi_3\land\phi_4\land\phi_5 \land \phi_6$ is a universal sentence, 
     we have to argue that $\bA \models \neg \Psi$ if and only if every substructure
     of $\bB$ satisfies $\phi_1\lor\phi_2$.
     Similarly as above, notice that a substructure $\bB'$ of $\bB$ satisfies $\phi_2$ if and only if
     $B' \cap X$ is not $X_1$. 

    Let $\bB_1$ be the substructure of $\bB$ with vertex set $(B\setminus X)\cup X_1$. 
    As explained above, $\bB\in \HER(\phi)$ if and only if all substructures $\bB'_1$ of $\bB_1$ 
    with $X_1\subseteq B_1'$ satisfy $\phi_1$. Similarly as above, we define a mapping $e$ 
    from substructures of $\bB_1$ containing $X_1$ to expansions of $\bA$ satisfying
    $R'(\bar x)\Rightarrow R(\bar x)$. In this case,  $e(\bB_1')$ is the expansion of $\bA$
     where $R_i(x_1,\dots, x_{r_i})$ holds whenever there is no $y \in B_1'$ such that
     $S_i(x_1,\dots, x_{r_i},y)$. Notice that in this case, the mapping $e$ is surjective
     but need not be injective. However, it follows with similar arguments as before
     that $\bB_1' \models \phi_1$ if and only if $e(\bB_1')\models \lnot (Q_1x_1\dots Q_mx_m.\psi)$.
     We thus conclude (via the mapping $e$) that $\bB_1$ contains a structure that does not
     model $\phi_1$ if and only if $\bA$ has an expansion that satisfies $Q_1x_1\dots Q_mx_m.\psi$.
     It now follows that $\bA\models\neg\Psi$ if and only if $\bB\in \HER(\psi)$. 
     Thus, $\HER(\phi)$ reduces in logarithmic space to $\neg\Psi$.
\end{proof}

\begin{theorem}\label{thm:meESO-extESO-coHerFO}
    The following classes have the same computational power up to log-space equivalence, 
    i.e., for every problem $P$ in any of the following classes, there exists a log-space equivalent
    problem in each of the other ones.
    \begin{itemize}
        \item Monadic extensional $\ESO$,
        \item extensional $\ESO$, and
        \item complements of problems in $\HerFO$.
    \end{itemize}
\end{theorem}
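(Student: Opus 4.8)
The plan is to close a short cycle of log-space reductions among the three classes, using the two results already proved together with the trivial observation that monadic extensional $\ESO$ is a syntactic fragment of extensional $\ESO$. Write $\mathcal M$, $\mathcal E$, and $\mathcal H$ for the classes of problems described by monadic extensional $\ESO$ sentences, by extensional $\ESO$ sentences, and of complements of $\HerFO$ problems, respectively; note that every member of $\mathcal H$ is of the form $\overline{\HER(\phi)}$ for some first-order sentence $\phi$. Two preliminary remarks will be used throughout: if a many-one reduction $f$ witnesses $x\in A \Leftrightarrow f(x)\in B$, then the same $f$ witnesses $x\in\overline A \Leftrightarrow f(x)\in\overline B$, so log-space equivalence is preserved under complementation; and since the composition of two log-space-computable functions is again log-space computable, log-space equivalence is transitive.

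First I would record three one-step reductions. \emph{(i)} Every monadic extensional $\ESO$ sentence is in particular an extensional $\ESO$ sentence, so every problem in $\mathcal M$ is literally a problem in $\mathcal E$ (reduction by the identity). \emph{(ii)} By Theorem~\ref{thm:extESO->HerFO}, for every extensional $\ESO$ sentence $\Psi$ there is a first-order $\phi$ such that the problem described by $\neg\Psi$ is log-space equivalent to $\HER(\phi)$; taking complements, the problem described by $\Psi$ is log-space equivalent to $\overline{\HER(\phi)}\in\mathcal H$, so every problem in $\mathcal E$ has a log-space equivalent problem in $\mathcal H$. \emph{(iii)} By Lemma~\ref{lem:HerFO-UMSO}, for every first-order $\phi$ --- which, after passing to prenex normal form, has the shape $Q_1x_1\dots Q_nx_n.\psi$ assumed there --- there is a monadic extensional $\ESO$ sentence $\Psi$ such that $\HER(\phi)$ is log-space equivalent to the complement of the problem described by $\Psi$; taking complements, $\overline{\HER(\phi)}$ is log-space equivalent to the problem described by $\Psi\in\mathcal M$. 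Since every problem in $\mathcal H$ is of the form $\overline{\HER(\phi)}$, this shows every problem in $\mathcal H$ has a log-space equivalent problem in $\mathcal M$.

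Items \emph{(i)}--\emph{(iii)} give the cycle $\mathcal M \to \mathcal E \to \mathcal H \to \mathcal M$, where $A\to B$ abbreviates ``every problem in $A$ has a log-space equivalent problem in $B$''. Composing along this cycle and using transitivity of log-space equivalence then yields the three remaining directions: $\mathcal M\to\mathcal H$ (via $\mathcal E$), $\mathcal E\to\mathcal M$ (via $\mathcal H$), and $\mathcal H\to\mathcal E$ (via $\mathcal M$). Together with \emph{(i)}--\emph{(iii)} this establishes all six directions demanded by the statement. I do not expect a genuine obstacle here: the mathematical content is entirely contained in Lemma~\ref{lem:HerFO-UMSO} and Theorem~\ref{thm:extESO->HerFO}, and this proof is just the bookkeeping that assembles them. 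The only points that deserve a word of justification are the composability of log-space reductions (argued in the standard way, by recomputing intermediate output symbols on demand within logarithmic work space) and the harmless normalization to prenex form (and to a CNF quantifier-free part) when invoking the two cited results.
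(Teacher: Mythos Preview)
Your proposal is correct and follows exactly the same approach as the paper: the paper's proof is a single sentence stating that the theorem is an immediate consequence of the fact that monadic extensional $\ESO$ is a fragment of extensional $\ESO$, of Lemma~\ref{lem:HerFO-UMSO}, and of Theorem~\ref{thm:extESO->HerFO}. Your write-up simply makes the three-step cycle $\mathcal M \to \mathcal E \to \mathcal H \to \mathcal M$ and the required bookkeeping (complementation, transitivity of log-space equivalence) explicit.
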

\begin{proof}
    This statement is an immediate consequence of the fact that monadic extensional $\ESO$
    is a fragment of extensional ESO, of Lemma~\ref{lem:HerFO-UMSO}, and
    of Theorem~\ref{thm:extESO->HerFO}.
\end{proof}

\section{Monotone extensional SNP}
\label{sect:mon-ext-SNP}
Naturally, \emph{extensional SNP} is the fragment of extensional ESO sentences whose first-order
part is universal. Equivalently, an SNP sentence $\Psi$ with first-order part
$\psi$ in conjunctive normal form is extensional if and only if for every 
existentially quantified symbol $R$ there is a distinct symbol $R'\in R$ such that $\psi$
contains the clause $R'(\overline{x})\Rightarrow R(\overline{x})$, and $R'$ does not appear in any
other clause of $\psi$. 

In this section we show that CSPs of finitely bounded structures are captured by 
extensional SNP (Corollary~\ref{cor:FBstructures})
and conclude with the proof of 
Theorem~\ref{thm:muSNP-fbCSP-sucSNPwe}.
For this purpose, it will be convenient to work with a slight generalization of CSPs: instead of $\CSP(\bB)$ for some relational structure $\bB$, we consider 
$\CSP(\phi)$ for some universal formula $\phi$, or, more generally, for some first-order theory $T$ (see, e.g.,~\cite[Chapter 1]{Book}).

An  instance of $\CSP(\phi)$ (and of $\CSP(T)$) is a 
primitive  positive formula $\psi$, and the task is to decide whether  $\phi\land \psi$
(resp.\ $T\cup\{\psi\}$) is satisfiable. Equivalently, the instance is a structure $\bA$ and the
task is to decide if there is a (possibly infinite) model $\bB$ of $\phi$  such that $\bA\to \bB$. 
Note that this generalizes $\CSP(\bB)$ for finite structure $\bB$, since for every finite structure $\bB$ there exists a universal sentence $\phi$ such that $\CSP(\bB)$ and $\CSP(\phi)$ is the same computational problem; the converse is not true in general.
We first characterize monotone extensional SNP 
in terms of constraint satisfaction problems of universal formulas (Theorem~\ref{thm:CSPs-SUSNP}). 

\subsection{Finitely bounded structures and full homomorphisms}

\label{sect:full}
This section contains a technical lemma that we use to characterize 
the expressive power of monotone extensional $\SNP$ in terms of CSPs of universal sentences
(Theorem~\ref{thm:CSPs-SUSNP}). On the way, we obtain a new (and simpler) proof of a theorem
proved by Ball, Ne\v{s}et\v{r}il, and Pultr~\cite[Theorem 3.3]{ballEJC31}.

Consider a $\tau$-structure $\bA$. We say that two distinct vertices $a,b\in A$ are
\emph{twins} if for every $R\in \tau$ of arity $r$, for $(a_1,\dots, a_r)\in A^r$
and every $i\in[r]$, it is the case that $(a_1,\dots, a_{i-1},a,a_{i+1},\dots, a_r)\in R^\bA$
if and only if $(a_1,\dots, a_{i-1},b,a_{i+1},\dots, a_r)\in R^\bA$. Following graph theoretic
nomenclature, we say that $\bA$ is \emph{point-determining} if $\bA$ contains no twins. 

Notice that for every finite relational signature $\tau$, there is an existential first-order
$\tau$-formula $\NT(x,y)$ such that $\bA \models \NT(a,b)$ if and only if $a$ and $b$ are not twins in $\bA$. Indeed,  one can construct such a formula
by considering the disjunction $\bigvee_{R\in\tau} \psi_R(x,y)$ where for each
$R\in \tau$ of arity $r$, the formula $\psi_R(x,y)$ equals 
    \[
    \exists z_1,\dots z_r \big ( (R(x,z_2,\dots, z_r) \Leftrightarrow \lnot R(y,z_2,\dots, z_r)) \lor \dots \lor
    (R(z_1,\dots, z_{r-1},x) \Leftrightarrow \lnot R(z_1,\dots, z_{r-1},y)) \big ) .
    \]
In particular, $\NT(x,y)$ implies the inequality $x\neq y$, and if $\bA$ is point-determining,
then $\bA\models \NT(a,b)$ if and only if $a\neq b$.

We say that a structure $\bA^\ast$ 
is a \emph{blow-up} of a structure $\bA$ if $\bA$ is a substructure of $\bB$, and every $b\in B\setminus A$ has
a twin $a$ in $A$. We extend this notion to a class of structures $\calC$, and call the class $\calC^\ast$ of
structures $\bB$ that are a blow-up of some $\bA\in \calC$  the \emph{blow-up of $\calC$}.

Recall that a homomorphism $f\colon \bA\to \bB$ is called \emph{full} (sometimes also \emph{strong}) if it also preserves
the relation $\lnot R$ for each $R\in \tau$. Notice that blow-ups and full homomorphisms are naturally
related: there is a surjective full homomorphism $f\colon \bA\to \bB$ if and only if $\bA$ is isomorphic
to a blow-up of $\bB$. 
It is straightforward to observe that for every finite structure $\bB$, there is (up to isomorphism) a unique
point-determining structure $\bB_0$ such that $\bB_0$ embeds into $\bB$ and $\bB$ is isomorphic to a blow-up of
$\bB_0$ --- such a structure can be obtained by 
repeatedly removing twins from $\bB$. 
It follows that if $\calC$ is a hereditary class, then $\bB\in \calC^\ast$ if and
only if $\bB_0 \in \calC$.

\begin{lemma}\label{lem:finite-bound->blow-up}
    Let $\calC$ be a hereditary class of relational structures with a finite domain and signature.
    If $\calC$ is finitely bounded, then the blow-up of $\calC$ is finitely bounded.
    Moreover, there is a universal sentence $\phi$ without equalities such that
    a finite structure $\bA$ satisfies $\phi$ if and only if $\bA\in \calC^\ast$.
\end{lemma}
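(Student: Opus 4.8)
The plan is to prove the ``moreover'' statement first, exhibiting an explicit equality-free universal sentence $\phi$ whose finite models are exactly the structures in $\calC^\ast$; finite boundedness of $\calC^\ast$ then follows from the standard fact that the class of finite models of an equality-free universal sentence over a finite signature is finitely bounded. Let $\tau$ be the (finite) signature. Since $\calC$ is hereditary and finitely bounded, fix a finite set $\calF$ of finite $\tau$-structures with $\bA \in \calC$ if and only if no structure from $\calF$ embeds into $\bA$. Recall from the discussion above that for a finite $\tau$-structure $\bB$ one has $\bB \in \calC^\ast$ if and only if $\bB_0 \in \calC$, where $\bB_0$ denotes the point-determining core of $\bB$; recall also that $\bB_0$ embeds into $\bB$ as an (induced) substructure, and that the twin-removal construction yields a surjective full homomorphism $h \colon \bB \to \bB_0$ which is the identity on $B_0$ and identifies exactly the twins of $\bB$ (that is, $h(b) = h(b')$ if and only if $b = b'$ or $b,b'$ are twins in $\bB$).

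The heart of the argument is the following claim: for a finite $\tau$-structure $\bF$ with domain $\{f_1, \dots, f_m\}$, $\bF$ embeds into $\bB_0$ if and only if there are $b_1, \dots, b_m \in B$ that are pairwise non-twins in $\bB$ and such that for every $R \in \tau$ of arity $r$ and every tuple $(i_1, \dots, i_r)$ of indices, $(f_{i_1}, \dots, f_{i_r}) \in R^{\bF}$ exactly when $(b_{i_1}, \dots, b_{i_r}) \in R^{\bB}$. For the forward direction, given an embedding $g \colon \bF \hookrightarrow \bB_0$, put $b_i := g(f_i) \in B_0$; distinct elements of $B_0$ lie in distinct twin-classes of $\bB$ (since $h$ is the identity on $B_0$ and identifies twins), hence are non-twins, and the required relational equivalence holds because $\bB_0$ is an induced substructure of $\bB$. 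For the converse, given such $b_1, \dots, b_m$, set $g(f_i) := h(b_i)$; pairwise non-twinness forces $h(b_i) \ne h(b_j)$ for $i \ne j$, so $g$ is injective, and since $h$ is a full homomorphism (hence preserves both $R$ and $\lnot R$) the relational equivalence over $\bB$ is transported to $\bB_0$, making $g$ an embedding. Note that nothing here requires $\bF$ to be point-determining.

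Now recall the equality-free existential formula $\NT(x,y)$ with $\bB \models \NT(a,b)$ iff $a,b$ are not twins; its negation $\lnot\NT(x,y)$ is equivalent to an equality-free universal formula. For each $\bF \in \calF$ with domain $\{f_1, \dots, f_{m_\bF}\}$, let $\mathrm{match}_\bF(x_1, \dots, x_{m_\bF})$ be the quantifier-free equality-free conjunction, over all $R \in \tau$ of arity $r$ and all index tuples $(i_1, \dots, i_r)$, of $R(x_{i_1}, \dots, x_{i_r})$ if $(f_{i_1}, \dots, f_{i_r}) \in R^{\bF}$ and of $\lnot R(x_{i_1}, \dots, x_{i_r})$ otherwise, and set
\[
\phi \;:=\; \bigwedge_{\bF \in \calF}\; \forall x_1 \cdots x_{m_\bF}\; \Bigl(\; \bigwedge_{1 \le i < j \le m_\bF} \NT(x_i, x_j) \;\Longrightarrow\; \lnot\,\mathrm{match}_\bF(x_1, \dots, x_{m_\bF}) \Bigr).
\]
By the claim, for a finite $\tau$-structure $\bB$ we have $\bB \models \phi$ iff no $\bF \in \calF$ embeds into $\bB_0$, i.e.\ iff $\bB_0 \in \calC$, i.e.\ iff $\bB \in \calC^\ast$. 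Finally, since in the antecedents the subformulas $\NT(x_i,x_j)$ occur negatively, pulling out and renaming their existential quantifiers shows that $\phi$ is equivalent to a prenex equality-free universal sentence, which proves the ``moreover'' part. For finite boundedness: if a finite $\bB$ fails $\phi$ then it already has a bad tuple together with witnesses for the relevant $\NT$-subformulas, all living in a subset of $B$ of size at most some $N$ depending only on $\tau$ and $\calF$; the induced substructure on that subset again fails $\phi$ (its relevant atomic diagram is unchanged and $\phi$ is universal and equality-free), so $\calC^\ast$ is hereditary and its complement among finite $\tau$-structures is generated under embeddings by the finitely many non-$\calC^\ast$ structures of size at most $N$ --- the desired finite family of bounds.

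The main obstacle is the key claim, and specifically the twin bookkeeping inside it: one must phrase the ``matching'' condition with respect to $\bB$ rather than $\bB_0$ (this is what keeps it equality-free and first-order over the input structure) while checking that non-twinness in $\bB$ --- rather than mere distinctness --- is precisely the correct substitute for injectivity of the induced map $f_i \mapsto h(b_i)$ into the point-determining core. The remaining ingredients --- writing down $\phi$, verifying it is equality-free and universal, and the passage from an equality-free universal axiomatization to a finite bound --- are routine.
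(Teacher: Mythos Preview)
Your proof is correct and takes essentially the same approach as the paper: your formula $\bigwedge_{i<j}\NT(x_i,x_j)\wedge\mathrm{match}_\bF$ is exactly what the paper calls the \emph{point-determining diagram} $\PDdiam{\bF}$ (the ordinary diagram with each $x_a\neq x_b$ replaced by $\NT(x_a,x_b)$), and your key claim is precisely the equivalence of the three itemized statements the paper proves. The paper's proof of that equivalence is organized slightly differently (it first argues directly that $\bB\models\exists\overline{x}\,\PDdiam{\bF}$ iff $\bB_0\models\exists\overline{x}\,\PDdiam{\bF}$, using that full homomorphisms preserve both atomic formulas and $\NT$), but the content is the same as your forward/backward argument via the twin-quotient map $h$.
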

\begin{proof}
    Given a finite $\tau$-structure $\bA$, the \emph{diagram} of $\bA$ is the $\tau$-formula
    $\diam_{\bA}$ defined as follows. 
    \begin{itemize}
        \item $\diam_{\bA}$ has a free variable $x_a$ for every $a\in A$, 
        \item  for every $R\in \tau \cup\{\neq\}$
    of arity $r$ and each tuple $(a_1,\dots, a_r)$ of $A$, the formula $\diam_{\bA}$ contains the conjunct $R(x_{a_1},\dots, x_{a_r})$
    if $\bA\models R(a_1,\dots, a_r)$,
       \item for every $R\in \tau$
    of arity $r$ and each tuple $(a_1,\dots, a_r)$ of $A$, the formula $\diam_{\bA}$ contains the conjunct $\lnot R(x_{a_1},\dots, x_{a_r})$
    if $\bA\models \lnot R(a_1,\dots, a_r)$. 
    \end{itemize}
    It is straightforward to observe
    that a $\tau$-structure $\bB$ satisfies $\exists \overline x.\diam_{\bA}(\overline x)$ if and only if
    $\bA$ embeds into $\bB$. 
    
    Building on the concept of the diagram of $\bA$, we construct the \emph{point-determining diagram} of
    $\bA$, which is the formula $\PDdiam{\bA}$ obtained from $\diam_{\bA}$
    by replacing each conjunct $x_a\neq x_b$ by the formula $\NT(x_a,x_b)$. We claim that the following
    statements are equivalent.
    \begin{itemize}[itemsep = 0.2pt]
        \item $\bB \models \exists \overline x.\PDdiam{\bA}(\overline x)$,
        \item $\bB_0 \models \exists \overline x.\PDdiam{\bA}(\overline x)$, and
        \item $\bA$ embeds into $\bB_0$. 
    \end{itemize}
    Since $\bB_0$ is a point-determining structure, $\bB_0$ satisfies $\NT(a,b) \Leftrightarrow a\neq b$
    for every pair $a,b\in B_0$. Hence, for every tuple $\overline b$ of $\bB_0$ it is the case that
    $\bB_0 \models \PDdiam{\bA}(\overline b)$ if and only if $\bB \models \diam_\bA(\overline b)$.
    Therefore, the last two itemized statements are equivalent, and since $\bB_0$ embeds into $\bB$,
    it follows that the second itemized statement implies the first one. Now,
    suppose that there is a tuple $\overline b$
    such that $\bB \models \PDdiam{\bA}(\overline{b})$. Let $f\colon \bB\to \bB_0$ be the full homomorphism
    such that $f(a) = f(b)$ if and only $a = b$ or if $a$ and $b$ are twins. It follows from the definition
    of $f$ that $\bB\models \NT(a,b)$ if and only if $\bB_0\models \NT(f(a),f(b))$, and since $f$ is a full homomorphism
    for every $R\in \tau$ of arity $r$ and each $r$-tuple $\overline x$ of $B$ it is the case that
    $\bB\models R(\overline x)$ if and only if $\bB_0\models R(f(\overline x))$. Therefore, since
    $\bB\models \PDdiam{\bA}(\overline b)$, we conclude that $\bB_0\models \PDdiam{\bA}(f(\overline{b}))$. 
    
    To show the statement of the lemma, let $\calF$ be a finite set of bounds of $\calC$, and let $\phi$ be the disjunction
    of the point-determining diagrams $\PDdiam{\bF}$ ranging over all $\bF\in \calF$. It follows from 
    the claim above that a finite structure $\bB$ models $\exists \overline x.\phi(\overline x)$ if and only
    if there is a structure $\bF\in \calF$ such that $\bF$ embeds into $\bB_0$. Equivalently, 
    $\bB\models \exists \overline x.\phi(\overline x)$ if and only if $\bB_0$ does not belong to $\calC$. 
    Since $\bB \in \calC^\ast$ if and only if $\bB_0\in \calC$, we conclude that 
    $\bB \models \forall x.\lnot \phi(\overline x)$ if and only if $\bB \in \calC^\ast$. This proves
    the moreover statement of this lemma. In particular, $\bB\in \calC^\ast$
    if and only if every substructure of $\bB$ with at most $|x|$ vertices belongs to $\calC^\ast$. 
    It thus follows that $\calC^\ast$ is finitely bounded. 
\end{proof}

Before stating our intended application of this lemma, we state two implications
of Lemma~\ref{lem:finite-bound->blow-up} of independent interest.
We denote by $\CSP_{\full}(\bB)$ the class of finite structures that admit a full
homomorphism to $\bB$. \emph{Closure under inverse full
homomorphisms} is defined analogously to closure under homomorphisms.

\begin{corollary}\label{cor:fin-bd}
    Let $\bB$ be a relational structure with finite signature. If $\bB$ is finitely bounded, 
    then $\CSP_{\full}(\bB)$ is finitely bounded.
\end{corollary}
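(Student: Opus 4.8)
The plan is to exhibit $\CSP_{\full}(\bB)$ as the blow-up of a hereditary finitely bounded class, and then quote Lemma~\ref{lem:finite-bound->blow-up}. Let $\tau$ be the (finite) signature of $\bB$, and let $\calC$ be the class of finite $\tau$-structures that embed into $\bB$. This class is hereditary, and it is finitely bounded: if $\calF$ is a finite set of finite structures witnessing that $\bB$ is finitely bounded, then a finite structure embeds into $\bB$ precisely when no member of $\calF$ embeds into it, so the same set $\calF$ witnesses that $\calC$ is finitely bounded.

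Next I would check that $\CSP_{\full}(\bB) = \calC^\ast$. A finite structure $\bA$ admits a full homomorphism $f$ to $\bB$ if and only if it admits one whose image is an induced substructure $\bB'$ of $\bB$ and which is surjective onto $\bB'$: for one direction corestrict $f$ to $f(\bA)$, which is still a full homomorphism since $\bB'$ is induced; for the other, compose a surjective full homomorphism onto a substructure of $\bB$ with the inclusion into $\bB$, which is an embedding and hence a full homomorphism. By the correspondence between surjective full homomorphisms and blow-ups recalled in Section~\ref{sect:full}, this happens exactly when $\bA$ is isomorphic to a blow-up of some substructure of $\bB$; since substructures of $\bB$ are, up to isomorphism, precisely the members of $\calC$, and blow-ups are closed under isomorphism, this is the same as $\bA \in \calC^\ast$.

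Finally, since $\calC$ is a hereditary, finitely bounded class of finite structures over the finite signature $\tau$, Lemma~\ref{lem:finite-bound->blow-up} applies and shows that $\calC^\ast$ is finitely bounded (in fact it even produces an equality-free universal axiomatization of $\calC^\ast$ among finite structures); since $\calC^\ast = \CSP_{\full}(\bB)$, this proves the corollary. The only step requiring care is the identification $\CSP_{\full}(\bB) = \calC^\ast$: one must verify that a full homomorphism always factors through its image as a full homomorphism onto an induced substructure, and that passing between ``substructure of $\bB$'' and ``structure embeddable into $\bB$'' does not change the resulting blow-up class — both are routine but are the load-bearing observations.
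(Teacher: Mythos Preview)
Your proposal is correct and follows exactly the paper's approach: the paper's proof is the single sentence ``This follows from Lemma~\ref{lem:finite-bound->blow-up}, because $\CSP_{\full}(\bB)$ is the blow-up of the age of $\bB$,'' and you have simply unpacked the identification $\CSP_{\full}(\bB) = \calC^\ast$ that the paper leaves to the reader. The only minor remark is that when you write ``substructures of $\bB$ are, up to isomorphism, precisely the members of $\calC$'' you should say \emph{finite} substructures (which is automatic since the image of a finite $\bA$ is finite), but this is clearly what you intend.
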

\begin{proof}
    This follows from Lemma~\ref{lem:finite-bound->blow-up},  because $\CSP_{\full}(\bB)$ is the blow-up
    of the age of $\bB$.
\end{proof}

The second application of Lemma~\ref{lem:finite-bound->blow-up} is a new 
proof of the following theorem proved by Ball, Ne\v{s}et\v{r}il, and Pultr~\cite[Theorem 3.3]{ballEJC31}. 
Using their notation, given
a set of structures  $\calB$, we denote the union  $\bigcup_{\bB\in \calB}\CSP_{\full}(\bB)$ by $\CSP_{\full}(\calB)$. 

\begin{corollary}
    For every finite set  $\calB$ of finite relational structures with finite signature, there is a finite
    set of structures $\calA$ such that $\Forb_{\full}(\calA) = \CSP_{\full}(\calB)$. 
\end{corollary}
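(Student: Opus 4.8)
The plan is to reduce the finite-union statement to the single-template case, which is exactly Corollary~\ref{cor:fin-bd}. First I would recall that for a single finite structure $\bB$ with finite signature, the class $\CSP_{\full}(\bB)$ is the blow-up of $\Age(\bB)$, and $\Age(\bB)$ is trivially finitely bounded (being the age of a finite structure, it is even defined by finitely many bounds, e.g., all structures of size $|B|+1$); hence Lemma~\ref{lem:finite-bound->blow-up} (or directly Corollary~\ref{cor:fin-bd}) yields a finite set $\calA_\bB$ of structures with $\Forb_{\full}(\calA_\bB) = \CSP_{\full}(\bB)$. The remaining task is to handle the union over $\bB \in \calB$.

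The main obstacle is that $\Forb_{\full}$ of a union of obstruction sets is \emph{not} the union of the $\CSP_{\full}$'s in general: forbidding fewer structures gives a larger class, so $\Forb_{\full}(\bigcap_\bB \calA_\bB)$ is too big, while $\Forb_{\full}(\bigcup_\bB \calA_\bB)$ is the \emph{intersection} $\bigcap_\bB \CSP_{\full}(\bB)$, not the union we want. To get around this I would work at the level of universal sentences rather than obstruction sets: by Lemma~\ref{lem:finite-bound->blow-up} applied to each $\Age(\bB)$, there is a universal sentence $\phi_\bB$ (without equalities) such that a finite structure $\bA$ satisfies $\phi_\bB$ if and only if $\bA \in \CSP_{\full}(\bB)$. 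Each $\phi_\bB$ is logically equivalent to a negation of a finite disjunction of point-determining diagrams, i.e.\ to a conjunction $\bigwedge_j \forall \overline x.\, \lnot \eta_{\bB,j}(\overline x)$ where each $\eta_{\bB,j}$ is a conjunction of atomic and $\NT$-formulas. Taking the disjunction $\phi := \bigvee_{\bB \in \calB} \phi_\bB$ gives a first-order sentence whose finite models are exactly $\CSP_{\full}(\calB)$, but $\phi$ is no longer in the required "universal, negation of diagrams" shape — a disjunction of universal sentences need not be universal.

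To repair this I would distribute: $\bigvee_\bB \bigwedge_j \forall \overline x.\,\lnot\eta_{\bB,j}$ is equivalent, over structures, to $\bigwedge_{f} \forall (\text{tuples})\, \bigvee_\bB \lnot\eta_{\bB, f(\bB)}(\overline x_\bB)$ where $f$ ranges over choice functions selecting one conjunct per $\bB$; bringing all the universal quantifiers out front (using disjoint tuples of variables $\overline x_\bB$ for the different disjuncts) we obtain a single universal sentence $\psi$ equivalent to $\phi$, each of whose clauses is a disjunction of negated atoms and negated $\NT$-atoms. Since every $\NT(x,y)$ unfolds (as displayed in the text just before Lemma~\ref{lem:finite-bound->blow-up}) into an existential positive-over-atoms-and-$\Leftrightarrow$ formula, $\lnot \NT(x,y)$ is universal, and after pushing those universal quantifiers out as well, $\psi$ becomes a universal sentence in which every atom occurs negatively. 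Finally, reading each clause of such a $\psi$ as its canonical database in the style of the paper's discussion of negative formulas, we read off a finite set $\calA$ of finite structures (equipped with the relevant equalities/inequalities from the $\NT$-expansions, handled exactly as twins) such that $\Forb_{\full}(\calA)$ is the set of finite models of $\psi$, which is $\CSP_{\full}(\calB)$. The bookkeeping in the distribution step and in tracking how $\NT$-literals translate into constraints on the obstruction structures is the fiddly part, but it is routine once the single-template case is in hand.
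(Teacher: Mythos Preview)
Your approach is essentially correct but considerably more roundabout than the paper's. The paper avoids the per-template decomposition entirely: it observes that $\calC := \bigcup_{\bB\in\calB}\Age(\bB)$ is a hereditary class containing only structures of size at most $m := \max_{\bB\in\calB}|B|$, hence trivially finitely bounded, and that $\CSP_{\full}(\calB)$ is precisely the blow-up $\calC^\ast$. One application of Lemma~\ref{lem:finite-bound->blow-up} to $\calC$ (rather than to each $\Age(\bB)$ separately) then gives a finite set of bounds for $\CSP_{\full}(\calB)$ directly. The passage from ``finitely bounded'' to ``$\Forb_{\full}(\calA)$ for finite $\calA$'' is the short final observation that, since $\CSP_{\full}(\calB)$ is closed under inverse full homomorphisms, the minimal bounds may be taken point-determining, and for point-determining structures embedding coincides with full homomorphism.

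Your distributivity argument is sound --- the key identity $\bigvee_i \forall \overline x_i.\,\theta_i(\overline x_i) \equiv \forall \overline x_1,\dots,\overline x_k.\,\bigvee_i \theta_i(\overline x_i)$ holds over any fixed structure --- and, unwound combinatorially, it amounts to taking $\calA = \{\biguplus_{\bB\in\calB}\bA_{f(\bB)} : f\in\prod_\bB \calA_\bB\}$, since a full homomorphism from a disjoint union exists iff each component has one. That direct description would save you the formula manipulation. Your final step, however, is where the sketch is loose: ``reading each clause as its canonical database'' in the paper's sense produces \emph{homomorphism} obstructions, not full-homomorphism obstructions, and the clauses you obtain after expanding $\NT$'s are not complete diagrams. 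You still need the closure-under-inverse-full-homomorphisms / point-determining argument that the paper supplies in its last paragraph to convert a universal axiomatisation into a $\Forb_{\full}$ description; this is not ``routine bookkeeping'' from the canonical-database viewpoint alone.
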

\begin{proof}
    Let $\calC$ be the union of the ages of $\bB$ over all $\bB\in \calB$. Notice that if $m$ is the size of the
    largest structure in $\calB$, then no structure on at least $m +1$ vertices belongs to $\calC$, and hence $\calC$
    is finitely bounded. Since $\CSP_{\full}(\calB)$ is the blow-up of $\calC$,
    there is a finite set of bounds $\calA$ to $\CSP_{\full}(\calB)$ (Lemma~\ref{lem:finite-bound->blow-up}).

    Moreover, since $\CSP_{\full}(\calB)$ is preserved under inverse full homomorphisms, 
    we may assume that every structure in $\calA$ is point-determining, and thus, a structure $\bA\in \calA$ 
    embeds into a structure $\bC$ if and only if there is a full homomorphism $\bA\to \bC$. The claim now follows. 
\end{proof}

\subsection{Monotone extensional SNP and CSPs}

Given a finite relational signature $\tau$ and a first-order $\tau$-sentence $\phi$, we denote by $\CSP(\phi)$
the class of finite $\tau$-structures $\bA$ for which there is a
$\tau$-structure $\bB$ such that $\bB\models \phi$ and $\bA \to \bB$.
There are first-order sentences such that $\CSP(\phi)$ is undecidable.\footnote{To see this, suppose for contradiction that
 $\CSP(\phi)$ is decidable for every first-order formula $\phi$. Notice that 
the structure $\bA$ with one-element domain and empty interpretation of every $R\in \tau$ belongs to $\CSP(\phi)$
if and only if there is a non-empty model of $\phi$.
Hence, given a first-order formula $\phi$ 
we can check whether $\bA\in\CSP(\phi)$ to decide whether $\phi$ is satisfied in a non-empty structure, contradicting
the undecidablility of first-order logic~\cite{DecisionProblem}.} 
Nonetheless, if one restricts to a universal sentence $\phi$, then $\CSP(\phi)$ is in NP. Indeed, suppose that for
input $\bA$ there is a structure $\bB$ such that $\bA\to \bB$ and $\bB\models \phi$. Since $\phi$ is
universal, it follows that any substructure of $\bB$ also models $\phi$, and thus, by guessing a structure
on at most $|A|$ vertices, one can verify in polynomial time whether $\bA\in \CSP(\phi)$. We propose the following characterization
of monotone extensional SNP.

\begin{theorem}\label{thm:CSPs-SUSNP}
    For a class $\calC$ of relational structures with finite signature, the following statements are equivalent. 
    \begin{itemize}
        \item $\calC$ is in monotone extensional $\SNP$. 
        \item There is a universal sentence $\phi$ such that $\calC = \CSP(\phi)$. 
    \end{itemize}
\end{theorem}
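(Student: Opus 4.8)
The plan is to give, for each direction, a canonical translation between $\CSP(\phi)$ and a monotone extensional $\SNP$ sentence, and to concentrate the real work in a single place: eliminating equality from $\phi$. Two auxiliary facts carry most of the argument. The first is a \emph{pullback lemma}: for an equality-free universal $\tau$-sentence $\theta$, a finite $\tau$-structure $\bA$ lies in $\CSP(\theta)$ if and only if there is a $\tau$-structure $\bB$ on the \emph{same} domain $A$ with $R^\bA\subseteq R^\bB$ for all $R\in\tau$ and $\bB\models\theta$. (Given a homomorphism $f\colon\bA\to\bC$ with $\bC\models\theta$, put $R^\bB:=\{\bar a: f(\bar a)\in R^\bC\}$; then $f\colon\bB\to\bC$ preserves \emph{and} reflects every relation, i.e.\ is a full homomorphism, and since $\theta$ has a quantifier-free matrix with no equality, full homomorphisms preserve it, so $\bB\models\theta$; conversely the identity is a homomorphism $\bA\to\bB$.) The second is an \emph{equality-elimination lemma}: for any universal $\tau$-sentence $\phi$ there is an equality-free universal $\tau$-sentence $\phi^\ast$ with $\CSP(\phi)=\CSP(\phi^\ast)$. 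One obtains $\phi^\ast$ by first deleting every negated equality literal $\lnot(u=v)$ from a clause by substituting $u$ for $v$ throughout that clause (an equivalence), so that only positive equality literals remain, and then replacing each literal $x=y$ by $\lnot\NT(x,y)$ with the formula $\NT$ from Section~\ref{sect:full}; since $\NT$ is existential, $\lnot\NT$ is universal and equality-free, so $\phi^\ast$ is again universal and equality-free.

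Granting these, both implications are short. For $(\Rightarrow)$, let $\Psi$ be a monotone extensional $\SNP$ sentence; by the standing convention it uses no equality, and after replacing every input symbol $T$ that is not the extension-partner of an existential symbol by a fresh existential $T^\circ$ together with the clause $T(\bar x)\Rightarrow T^\circ(\bar x)$ — which is sound because $T$, lying in the input signature of a monotone sentence, occurs only negatively, so the minimal choice $T^\circ=T^\bA$ is optimal — we may assume its first-order part has the shape $\bigwedge_{R\in\tau}\bigl(R(\bar x)\Rightarrow R^\circ(\bar x)\bigr)\wedge\psi_0$, where $\psi_0$ is equality-free and mentions only the existential symbols $R^\circ$. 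Let $\phi$ be $\forall\bar x.\,\psi_0$ with each $R^\circ$ renamed to the corresponding $R\in\tau$. Then $\bA\models\Psi$ iff some interpretation of the $R^\circ$ on $A$ extending each $R^\bA$ satisfies $\forall\bar x.\,\psi_0$, which by the pullback lemma says exactly that $\bA\in\CSP(\phi)$. For $(\Leftarrow)$, given a universal $\phi$ take $\phi^\ast$ from the equality-elimination lemma; by the pullback lemma $\CSP(\phi)=\CSP(\phi^\ast)$ is the class of models of the monotone extensional $\SNP$ sentence
\[
  \exists (R^\circ)_{R\in\tau}\ \forall\bar x\ \Bigl(\bigwedge_{R\in\tau}\bigl(R(\bar x)\Rightarrow R^\circ(\bar x)\bigr)\ \wedge\ \phi^\ast[R\mapsto R^\circ]\Bigr),
\]
which is monotone (each input symbol $R$ occurs only negatively, in $\lnot R(\bar x)\vee R^\circ(\bar x)$, and there is no equality) and extensional (each $R^\circ$ extends $R$, which occurs nowhere else).

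The main obstacle is the equality-elimination lemma, that is, $\CSP(\phi)=\CSP(\phi^\ast)$ for $\phi^\ast$ obtained by the $\lnot\NT$-substitution. A clause-by-clause elimination does not work: merely dropping clauses that contain a positive equality literal, or strengthening $x=y$ to ``always equal'', changes the CSP as soon as such clauses interact with others. The correct argument exploits that a homomorphism may collapse interchangeable elements and that $\lnot\NT$ captures precisely this. Concretely, since the interpretation of every relation is invariant under exchanging twins, for any structure $\bB$ one has $\bB\models\phi^\ast$ if and only if the point-determining quotient $\bB_0$ of $\bB$ satisfies $\phi$ — because $\lnot\NT(x,y)$ holds exactly between elements of a common twin-class, and on $\bB_0$ this is genuine equality. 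Moreover a finite structure lies in $\CSP(\phi)$ if and only if it maps homomorphically to some point-determining model of $\phi$: given any model $\bC$ of $\phi$ receiving $\bA$, the homomorphic image of $\bA$ in $\bC$ is a substructure of $\bC$, hence models $\phi$; its point-determining quotient is point-determining, embeds into $\bC$, so still models the universal sentence $\phi$, and it still receives $\bA$. Combining these two observations gives $\CSP(\phi)=\CSP(\phi^\ast)$. Everything else — the pullback lemma, the removal of negated equalities, the renaming of existential symbols, and the absorption of bare input symbols — is routine, as is checking that the displayed sentence is syntactically in monotone extensional $\SNP$.
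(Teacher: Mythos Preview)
Your argument is correct and follows essentially the same route as the paper: both directions hinge on the twin/$\NT$ machinery, with your ``pullback lemma'' playing the role of the paper's observation that an extension on the same domain encodes a homomorphism to a model, and your equality-elimination step paralleling Lemma~\ref{lem:finite-bound->blow-up} (which produces an equality-free axiomatisation of the blow-up $\calC^\ast$ via point-determining diagrams of the bounds, whereas you do the direct syntactic substitution $x{=}y\mapsto\lnot\NT(x,y)$ and argue via $\bB\models\phi^\ast\Leftrightarrow\bB_0\models\phi$). Your packaging is somewhat more streamlined; the paper instead keeps non-partner input symbols as they are and appeals to preservation of monotone $\SNP$ under inverse homomorphisms, where you absorb them and use the pullback lemma uniformly.

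One point deserves an explicit sentence. You assert that $\phi^\ast$ is ``again universal'', but a clause of $\phi^\ast$ may contain several $\lnot\NT$ disjuncts, each itself a universally quantified subformula, and a disjunction of universal formulas is not literally universal. After renaming bound variables, the standard prenex equivalence $(\forall\bar z.\,\alpha)\vee(\forall\bar w.\,\beta)\equiv\forall\bar z,\bar w.\,(\alpha\vee\beta)$ (valid over non-empty domains, which the paper assumes) does yield an equivalent universal equality-free sentence, so the displayed $\SNP$ sentence is syntactically well-formed; but this step should be stated rather than left implicit.
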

\begin{proof}
    Let $\tau$ be a finite relational signature and  let $\calC$ be a class of finite $\tau$-structures.
    First, suppose that there is a monotone extensional $\SNP$ $\tau$-sentence $\Phi$ describing
    the class $\calC$. Let $\sigma$ be the signature containing exactly the existentially quantified
    symbols from $\Phi$ and let $\tau'\subseteq \tau$ be the signature that contains the symbols $R'$ that are extended by
    some $R\in \sigma$. Let $\nu \land \psi$ be the first-order part of $\Phi$ where $\nu$ is 
    the conjunction of the clauses $R'\Rightarrow R$ for every $R\in \sigma$ and
    $\psi$ is a universal $(\sigma\cup \tau\setminus \tau')$-formula. Let $\psi'$ be the $\tau$-formula
    obtained from $\psi$ by replacing each existentially quantified symbol $R$ by $R'$. We claim that a
    finite $\tau$-structure $\bA$ satisfies $\Phi$ if and only if $\bA\in \CSP(\psi')$. 
    
    Let $\bB$ be an expansion of $\bA$ witnessing that $\bA$ satisfies $\Phi$. 
    Consider the structure $\bA'$ with domain $A$ where $R^{\bA'} = R^\bA$ for every
    $R\in \tau\setminus \tau'$, and where 
    $(R')^{\bA'} = R^{\bB}$
    for every $R'\in \tau'$. 
    Clearly, $\bA'\models \psi'$ because $\bB \models \psi$,
    and since $\bB \models \nu$, it follows that the identity on $A$ defines a homomorphism
    $\bA \to \bA'$, and thus $\bA\in \CSP(\psi')$. 
    
    Conversely, suppose that $\bA\in \CSP(\psi')$, 
    and let $\bB$ be a model of $\psi'$ such that $\bA \to  \bB$. Since $\psi'$ is a universal sentence, 
    we may assume without loss of generality that $\bB$ is a finite structure. Moreover, since $\Phi$
    is in monotone extensional $\SNP$, it is preserved under inverse homomorphisms (see, e.g.,
    Theorem~\ref{thm:SNP-CSP}). Thus, in order to show that $\bA$ satisfies $\Phi$,
    it suffices to show that every finite structure $\bB$ that models $\psi'$ also models $\Phi$. 
    This is immediate: consider the $\sigma\cup\tau$-expansion $\bB'$ of $\bB$ where there interpretation
    of each $R\in \sigma$ coincides with the interpretation of $R'$ in $\bB$; clearly, $\bB'\models \nu$, 
    and since $\bB\models \psi'$, it follows that $\bB'\models \psi$. This proves that the first item implies
    the second one.

    Now, suppose that the second itemized statement holds, and let $\calB$ be the class of finite structures
    that model $\phi$. Again, since $\phi$ is a universal $\tau$-sentence, a $\tau$-structure $\bA$ belongs
    to $\CSP(\phi)$ if and only if there is a structure $\bB\in \calB$ such that $\bA\to \bB$. In particular,  
    every blow-up $\bB^\ast$ of a structure $\bB\in \calB$ belongs to $\CSP(\phi)$ (see Section~\ref{sect:full}). In turn, this implies
    that a structure $\bA$ belongs to $\CSP(\phi)$ if and only if there is an injective
    homomorphism $f\colon \bA\to \bB$ for some  structure $\bB\in \calB^\ast$. 
    Let $\psi'$ be a universal $\tau$-sentence
    without equalities that defines $\calB^\ast$ (see Lemma~\ref{lem:finite-bound->blow-up}). Consider a 
    signature $\sigma$ containing a relation symbol $R^\ast$ for each $R\in \tau$, and the formula $\psi$
    obtained from $\psi'$ by replacing each occurrence of a symbol $R\in \tau$ by $R^\ast$. Finally, 
    let $\Psi$ be the $\SNP$ $\tau$-sentence with existentially quantified symbols
    from $\sigma$ and first-order part $\nu \land \psi$, where $\nu$ is the conjunction of clauses $R \Rightarrow R^\ast$
    ranging over $R\in \tau$. Similarly as above  one can notice that a finite structure $\bA$ satisfies $\Phi$
    if and only if $\bA\in \CSP(\phi)$: every $\bB \in \calB^\ast$ satisfies $\Psi$ (indeed, define an expansion of $\bB$
    satisfying $\nu\land \psi$ by letting $R^\ast = R$ for each $R^\ast\in \sigma$), and since $\Psi$ is in monotone $\SNP$,
    we conclude that every structure $\bA$ in $\CSP(\phi)$ satisfies $\Psi$. Conversely, if $\bB$ satisfies
    $\Phi$, then by considering the structure $\bB'$ where $R^{\bB'} = (R^\ast)^\bB$ for each $R\in \tau$, we obtain
    a structure $\bB'$ that satisfies $\phi$, and such that $\bB\to\bB'$.
    Clearly, $\Phi$ is in monotone extensional $\SNP$, and therefore, the second itemized statement
    implies the first one. 
\end{proof}

Recall from the introduction that a structure $\bB$ is finitely bounded if and only if there exists
a finite set ${\mathcal F}$ of finite structures such that a finite structure $\bA$ embeds into $\bB$ if
and only if no structure from ${\mathcal F}$ embeds into $\bA$. Note that CSPs of finitely bounded structures
are in NP.
 
\begin{corollary}\label{cor:FBstructures}
    Let $\bS$ be a relational structure in a finite signature. If $\bS$ is finitely bounded, 
    then $\CSP(\bS)$ is in monotone extensional $\SNP$. Moreover, if $\bS$ has a finite
    set of connected bounds, then $\CSP(\bS)$ is in monotone connected extensional $\SNP$. 
\end{corollary}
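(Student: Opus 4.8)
The plan is to first rewrite $\CSP(\bS)$ as $\CSP(\phi)$ for a suitable universal sentence $\phi$ and then apply Theorem~\ref{thm:CSPs-SUSNP}; for the second statement I would re-run the construction underlying that theorem and keep track of connectedness. Concretely, let $\tau$ be the signature of $\bS$ and let $\calF$ be a finite set of finite $\tau$-structures witnessing that $\bS$ is finitely bounded, which I may assume to be an antichain with respect to embeddability. For $\bF\in\calF$ write $\diam_{\bF}$ for its diagram --- the conjunction, over variables indexed by the elements of $\bF$, of $R(x_{\overline f})$ for $\overline f\in R^{\bF}$, of $\lnot R(x_{\overline f})$ for $\overline f\notin R^{\bF}$, and of $x_a\neq x_b$ for distinct elements $a,b$ --- and set $\phi:=\bigwedge_{\bF\in\calF}\forall\overline x\,\lnot\diam_{\bF}(\overline x)$, a universal $\tau$-sentence. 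A finite $\tau$-structure models $\phi$ iff it is $\calF$-free iff it embeds into $\bS$; and since $\phi$ is universal and every finite substructure of $\bS$ embeds into $\bS$, also $\bS\models\phi$. I claim $\CSP(\bS)=\CSP(\phi)$. If $\bA\to\bS$ then $\bA\in\CSP(\phi)$ since $\bS\models\phi$. Conversely, if $\bA\in\CSP(\phi)$, pick $\bB\models\phi$ with $\bA\to\bB$ and replace $\bB$ by its substructure on the image of the homomorphism; this substructure is finite and, as $\phi$ is universal, still models $\phi$, hence is $\calF$-free and embeds into $\bS$, so composing gives $\bA\to\bS$. Theorem~\ref{thm:CSPs-SUSNP} then yields that $\CSP(\bS)=\CSP(\phi)$ is in monotone extensional $\SNP$.

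For the ``moreover'' part I would assume every $\bF\in\calF$ is connected and revisit the second-implies-first direction of the proof of Theorem~\ref{thm:CSPs-SUSNP} applied to this $\phi$. That argument uses Lemma~\ref{lem:finite-bound->blow-up} to obtain an equality-free universal sentence defining the blow-up of the class of finite models of $\phi$, which here is $\Age(\bS)$; supplying $\calF$ itself as the set of bounds, this sentence is $\psi':=\forall\overline x\,\bigwedge_{\bF\in\calF}\lnot\PDdiam{\bF}(\overline x)$, where $\PDdiam{\bF}$ is $\diam_{\bF}$ with each inequality $x_a\neq x_b$ replaced by $\NT(x_a,x_b)$. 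The resulting SNP sentence $\Psi$ has a fresh symbol $R^{*}$ for each $R\in\tau$, first-order part $\big(\bigwedge_{R\in\tau}\forall\overline x\,(R(\overline x)\Rightarrow R^{*}(\overline x))\big)\wedge\psi$ with $\psi$ obtained from $\psi'$ by replacing every $R$ by $R^{*}$, it describes $\CSP(\bS)$, and it is monotone and extensional; what is left is to check it is connected. The extensional clauses $R(\overline x)\Rightarrow R^{*}(\overline x)$ are connected since their only negative literal is $\lnot R(\overline x)$, a single hyperedge on all variables. For the clauses coming from $\psi$ one first moves the universal quantifiers hidden in the subformulas $\lnot\NT(\cdot,\cdot)$ to the front and distributes into prenex conjunctive normal form; since $\bF$ is connected I may assume it has at least two elements (one-element bounds produce single-variable clauses). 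Each resulting clause then contains, as a sub-disjunction, the whole family of literals $\lnot R^{*}(x_{\overline f})$ over $\overline f\in R^{\bF}$ and all $R$; the canonical database of that sub-disjunction, restricted to the variables $\{x_f:f\in F\}$, is precisely the relational part of $\bF$, so connectedness of $\bF$ makes these variables one connected block. Every other variable of the clause appears only because it was introduced when prenexing a conjunct of some $\lnot\NT(x_a,x_b)$, and such a conjunct is a normal-form rewriting of $\forall\overline z\,(R^{*}(\overline u)\Leftrightarrow R^{*}(\overline v))$, where $\overline u$ and $\overline v$ coincide except that one coordinate carries $x_a$ in $\overline u$ and $x_b$ in $\overline v$; each of its two clauses has a negative literal $\lnot R^{*}(\overline u)$ or $\lnot R^{*}(\overline v)$ whose hyperedge links the new variables $\overline z$ to $x_a$ or to $x_b$, both of which lie in the connected block. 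Hence the canonical database of every clause of $\Psi$ is connected, and as $\Psi$ has no positive equality literal it is the desired connected monotone extensional $\SNP$ sentence.

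The hard part will be this last bookkeeping: checking that pushing the universal quantifiers of the $\lnot\NT$-subformulas outward and normalizing cannot disconnect a clause, i.e.\ that every auxiliary variable remains joined to the ``core'' variables $x_f$ by a negative literal. The remaining ingredients --- the identity $\CSP(\bS)=\CSP(\phi)$ and the two appeals to Theorem~\ref{thm:CSPs-SUSNP} and Lemma~\ref{lem:finite-bound->blow-up} --- are routine.
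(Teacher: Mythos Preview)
Your proposal is correct and follows essentially the same route as the paper. The first part (write $\CSP(\bS)=\CSP(\phi)$ for the universal sentence built from the diagrams of the bounds, then invoke Theorem~\ref{thm:CSPs-SUSNP}) is identical to the paper's proof. For the ``moreover'' part the paper simply asserts that the universal formula produced by Lemma~\ref{lem:finite-bound->blow-up} is connected whenever the bounds are connected, and then relies on the construction of Theorem~\ref{thm:CSPs-SUSNP} to carry this through; you instead spell out the bookkeeping that justifies this assertion (each CNF clause of $\lnot\PDdiam{\bF}$ retains the full block of negative literals $\lnot R^{*}(x_{\overline f})$ encoding $\bF$, and every auxiliary $z$-variable introduced by a $\lnot\NT$-conjunct is attached to some $x_a$ or $x_b$ via the negative half of its biconditional). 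That verification is exactly what the paper suppresses, so your plan is the paper's proof with the omitted detail filled in.
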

\begin{proof}
    Since $\bS$ is finitely bounded, there is a universal sentence $\phi$ such that a finite structure
    models $\phi$ if and only if $\bA$ embeds into $\bS$. Clearly,  $\CSP(\bS) \subseteq \CSP(\phi)$.  We briefly argue
    that this inclusion is actually an equality. Let $\bB$ be a model of $\phi$ such that there is homomorphism
    $f\colon \bA\to \bB$.  Since $\phi$ is universal, it follows that the substructure $\bB'$ of $\bB$ with vertex
    set $f[A]$ is a finite model of $\phi$ such that $\bA\to \bB'$. 
    We can conclude that $\bA\in \CSP(\bS)$, because
    $\bB'$ is a finite structure satisfying $\phi$, so $\bB'$ embeds into $\bS$, and thus $\bA \to  \bS$.
    The claim of the corollary now follows from Theorem~\ref{thm:CSPs-SUSNP}. 
    Finally, the moreover statement follows analogously by noticing that the construction of the
    formula $\phi$ from Lemma~\ref{lem:finite-bound->blow-up} satisfies that if the finite bounds of 
    $\calC$ are connected, then $\phi$ is a connected universal formula defining $\calC^\ast$. 
\end{proof}

\subsection{CSPs of finitely bounded structures}
Corollary~\ref{cor:FBstructures} asserts that CSPs of finitely bounded structures are in 
monotone extensional SNP. However, there are monotone extensional SNP
sentences that do not describe the CSP of a finitely bounded structure, because the class of finite
models of such a sentence need not be preserved under disjoint unions.
In this section we show that nonetheless, CSPs of finitely bounded structures capture the computational
power of monotone extensional SNP, up to polynomial-time conjunctive-equivalence.

Theorem~\ref{thm:SNP-CSP} guarantees that every monotone connected extensional SNP sentence
$\Phi$ describes the CSP of some structure. Moreover, it follows from the proof of
Theorem~\ref{thm:CSPs-SUSNP} that $\Phi$ describes the CSP of a finitely bounded structure.

\begin{corollary}\label{cor:SUCSNP}
    Every monotone connected extensional SNP sentence $\Phi$ describes a problem of the
    form $\CSP(\bB)$ for some finitely bounded structure $\bB$.
\end{corollary}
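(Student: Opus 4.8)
Let $\calC$ be the class of finite $\tau$-structures described by $\Phi$. The plan is to extract from the proof of Theorem~\ref{thm:CSPs-SUSNP} a \emph{connected} universal sentence defining $\calC$ as a $\CSP$, to observe that such a sentence is finitely bounded by a finite set of \emph{connected} structures, and finally to read off an explicit finitely bounded template. Since $\Phi$ is in particular a monotone extensional $\SNP$ sentence, the first half of the proof of Theorem~\ref{thm:CSPs-SUSNP} produces a universal $\tau$-sentence $\psi'$ with $\calC = \CSP(\psi')$: one deletes from the first-order part of $\Phi$ the extension clauses $R'(\bar x)\Rightarrow R(\bar x)$ and replaces each existentially quantified symbol $R$ by the input symbol $R'$ it extends. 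I would first note that $\psi'$ is moreover connected. Indeed, each extension clause $R'(\bar x)\Rightarrow R(\bar x)$ is connected, since its canonical database is the single tuple $\bar x$ in the relation $R'$; hence connectedness of $\Phi$ forces every other clause of its first-order part to be connected, and renaming relation symbols by symbols of the same arity changes neither the canonical database of a clause nor its connectedness. In particular every variable occurring in a clause of $\psi'$ occurs in some negated atom of that clause.

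Next I would show that the class $\calB$ of finite models of $\psi'$ is finitely bounded by a finite set $\calF$ of connected structures. This is essentially the familiar argument that a universal sentence defines a finitely bounded class: a finite structure $\bB$ fails $\psi'$ if and only if some induced substructure of $\bB$ is a ``bad pattern'', namely a homomorphic image of the canonical database of a clause of $\psi'$ in which, moreover, every positively occurring atom of that clause is falsified. Up to isomorphism there are only finitely many bad patterns --- each has at most as many elements as its clause has variables --- so they form a finite set $\calF$, and $\calB$ is exactly the class of finite $\calF$-free structures. Each bad pattern is connected: the canonical database of each clause is connected and, by the previous paragraph, involves all of the clause's variables, quotients of connected structures are connected, and adding tuples to a relation cannot disconnect a structure. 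Since every member of $\calF$ is connected, it embeds into a disjoint union of structures if and only if it embeds into one of the parts; consequently $\calB$ is closed under disjoint unions.

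Finally, let $\bS$ be the disjoint union of one copy of each isomorphism type occurring in $\calB$. If a finite structure $\bA$ embeds into $\bS$, then each connected component of $\bA$, being connected, lies within a single component of $\bS$, which belongs to $\calB$ and is therefore $\calF$-free; since every member of $\calF$ is connected, no member of $\calF$ embeds into $\bA$, so $\bA \in \calB$. Conversely each member of $\calB$ is a component of $\bS$, so $\Age(\bS) = \calB$ and $\bS$ is finitely bounded. Furthermore, if $\bA \to \bS$ for a finite $\bA$, then the image meets only finitely many components $\bB_1,\dots,\bB_k$ of $\bS$, so $\bA \to \bB_1 \uplus \cdots \uplus \bB_k \in \calB$ and hence $\bA \in \CSP(\psi')$; conversely, if $\bA \to \bB$ for some $\bB\in\calB$ then $\bA \to \bS$ because $\bB$ is a component of $\bS$. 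Thus $\calC = \CSP(\psi') = \CSP(\bS)$ with $\bS$ finitely bounded, which is the claim. The degenerate case $\calB = \varnothing$ (forcing $\calC = \varnothing$) is governed by whatever convention already makes Theorem~\ref{thm:SNP-CSP} hold.

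I expect the one delicate step to be the second paragraph: making fully precise the passage from the clauses of the connected universal sentence $\psi'$ to a finite family of connected forbidden \emph{induced} substructures, keeping careful track of the positively occurring atoms of the clauses and of the fact that ``embeds'' everywhere means ``isomorphic to an induced substructure''. The remaining steps are routine manipulations of the notions set up in Section~\ref{sect:mon-ext-SNP}.
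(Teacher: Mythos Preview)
Your argument is correct and follows essentially the same strategy the paper sketches: extract the universal sentence $\psi'$ from the first half of the proof of Theorem~\ref{thm:CSPs-SUSNP}, observe that connectedness of $\Phi$ transfers to $\psi'$, and use this to produce a finitely bounded template. The paper compresses all of this into two sentences, invoking Theorem~\ref{thm:SNP-CSP} as a black box to get \emph{some} template and then pointing at the proof of Theorem~\ref{thm:CSPs-SUSNP} for finite boundedness; you instead construct the template $\bS$ explicitly as a disjoint union and verify directly that $\Age(\bS)$ equals the class of finite models of $\psi'$ and that $\CSP(\bS)=\CSP(\psi')$. This is the natural way to unpack the paper's terse justification, and it has the minor advantage of not needing Theorem~\ref{thm:SNP-CSP} except, as you note, to license the degenerate case.

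Two small remarks. First, in your second paragraph the cleanest way to isolate the connected bounds is to take the \emph{minimal} bad patterns (with respect to embedding): any minimal witness to the failure of a clause must use all of its domain as the image of the clause's variables, so it is a quotient of the connected canonical database with possibly extra tuples, hence connected. Your description via ``homomorphic image plus falsified positive atoms'' is correct but slightly obscures why no spurious disconnected bounds slip in. Second, your handling of the case $\calB=\varnothing$ is the right call; one can manufacture unsatisfiable connected monotone extensional SNP sentences with single-variable clauses, so this edge case genuinely needs to be deferred to whatever convention makes Theorem~\ref{thm:SNP-CSP} true.
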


In Appendix~\ref{ap:fdCSP->mceSNP}, we include some results regarding the connected fragment
of monotone extensional SNP. In particular, we show that every finite domain CSP is described 
by a monotone connected extensional SNP sentence (Corollary~\ref{cor:finite-domain-MCESNP}),
and we leave as an open question whether for every finitely bounded structure $\bB$, there
is a monotone connected extensional SNP sentence describing $\CSP(\bB)$. We also
show that for every finitely bounded CSP there is a polynomial-time equivalent problem
described by a monotone connected extensional SNP sentence (Lemma~\ref{lem:B->BB2}); we 
use this fact in our next proof.

\begin{theorem}\label{thm:muSNP-fbCSP-sucSNPwe}
     The following classes have the same computational power up to polynomial-time conjunctive-equivalence, i.e.,
    for every problem $P$ in any of the following classes, there are polynomial-time conjunctive-equivalent problems
    in each of the other ones. 
    \begin{itemize}
        \item Monotone extensional $\SNP$.
        \item Monotone connected extensional $\SNP$.
        \item $\CSP$s of finitely bounded structures. 
    \end{itemize}
    Moreover, the last two classes have the same computational power up to log-space equivalence. 
\end{theorem}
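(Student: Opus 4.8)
The plan is to establish a cycle of reductions among the three classes, leaning on the
characterization of monotone extensional $\SNP$ from Theorem~\ref{thm:CSPs-SUSNP}, the
connectedness statement in Corollary~\ref{cor:SUCSNP}, and the auxiliary reductions promised
in Appendix~\ref{ap:fdCSP->mceSNP} (in particular Lemma~\ref{lem:B->BB2}). First I would note
the trivial inclusions: monotone connected extensional $\SNP$ is a syntactic fragment of
monotone extensional $\SNP$, and by Corollary~\ref{cor:SUCSNP} every problem in the connected
fragment is literally (not just up to equivalence) a $\CSP$ of a finitely bounded structure, so
$$\text{monotone connected ext.\ }\SNP \subseteq \{\CSP(\bB) : \bB \text{ finitely bounded}\} \subseteq \text{monotone ext.\ }\SNP,$$
the last inclusion being exactly Corollary~\ref{cor:FBstructures}. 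Thus the whole theorem
reduces to closing the loop: showing that every problem in monotone extensional $\SNP$ is
polynomial-time conjunctive-equivalent to a $\CSP$ of a finitely bounded structure (and, for the
``moreover'', that a $\CSP$ of a finitely bounded structure is log-space equivalent to one in the
connected fragment).

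The main obstacle — and the only nontrivial direction — is handling \emph{disconnectedness}. By
Theorem~\ref{thm:CSPs-SUSNP} a monotone extensional $\SNP$ sentence $\Phi$ describes $\CSP(\phi)$
for some universal $\phi$, but $\CSP(\phi)$ need not be closed under disjoint unions, so it need
not be $\CSP(\bB)$ for any single $\bB$. The standard fix is: an instance $\bA$ of $\CSP(\phi)$ is
a ``yes'' instance iff \emph{each} connected component $\bA_i$ of $\bA$ maps into some finite
model of $\phi$ on at most $|A_i|$ vertices; and a finite structure maps into some model of $\phi$
iff it maps into a model of the connected part of $\phi$ obtained by discarding all disconnected
conjuncts (a clause that is a disjunction of several connected pieces can only fail on a structure
containing all those pieces simultaneously, and such a structure, if connected, is itself too
large — more precisely one checks that on connected instances $\CSP(\phi)$ coincides with
$\CSP(\phi')$ where $\phi'$ keeps only the connected clauses of $\phi$, which by
Theorem~\ref{thm:SNP-CSP}/Corollary~\ref{cor:SUCSNP} is $\CSP(\bB)$ for a finitely bounded $\bB$).
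This yields a conjunctive reduction from $\CSP(\phi)$ to $\CSP(\bB)$: on input $\bA$, output the
list of its connected components. The reverse many-one reduction is immediate since
$\CSP(\bB)\subseteq\CSP(\phi')\subseteq\CSP(\phi)$ and the connected fragment is a subfragment.
I would be careful that the ``connected part'' argument genuinely preserves the class on connected
inputs — this is where the finite-boundedness of $\phi$ matters, bounding the size of a
hypothetical connected counterexample — and I expect this to be the step requiring the most care.

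For the ``moreover'' statement, I would invoke Lemma~\ref{lem:B->BB2}: given a finitely bounded
$\bB$, that lemma provides a finitely bounded $\bB'$ with \emph{connected} bounds (hence, via
Corollary~\ref{cor:FBstructures}'s moreover clause, $\CSP(\bB')$ is in monotone connected
extensional $\SNP$) together with log-space reductions between $\CSP(\bB)$ and $\CSP(\bB')$ in
both directions. Combined with the inclusion $\text{monotone connected ext.\ }\SNP \subseteq
\{\CSP(\bB):\bB\text{ finitely bounded}\}$ from Corollary~\ref{cor:SUCSNP}, this gives log-space
equivalence between the last two classes. Finally I would assemble the pieces into the claimed
cycle: monotone ext.\ $\SNP$ $\rightsquigarrow$ $\CSP$ of finitely bounded (conjunctive, via the
component reduction above) $\rightsquigarrow$ monotone connected ext.\ $\SNP$ (log-space, via
$\bB\mapsto\bB'$) $\subseteq$ monotone ext.\ $\SNP$ (syntactic), noting that log-space reductions
are in particular polynomial-time many-one hence polynomial-time conjunctive, so the three-way
conjunctive-equivalence follows, and the sharper log-space equivalence holds between the two
$\CSP$-free-of-disjointness classes.
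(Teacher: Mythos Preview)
Your overall architecture (set up the trivial inclusions, then close the loop) matches the paper, and your use of Lemma~\ref{lem:B->BB2} for the ``moreover'' part is essentially what the paper does (though note that the lemma operates on \emph{sentences}, not on structures: it hands you a connected monotone extensional $\SNP$ sentence $\Psi$ log-space equivalent to $\CSP(\bB)$, and then Corollary~\ref{cor:SUCSNP} turns $\Psi$ into a finitely bounded $\CSP$ --- it does not directly produce a $\bB'$ with connected bounds).

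The genuine gap is in your main step, the passage from a general monotone extensional $\SNP$ sentence (equivalently, $\CSP(\phi)$ for a universal $\phi$) to a finitely bounded $\CSP$ via ``take the connected part $\phi'$ of $\phi$ and split the instance into components.'' Your claim that $\CSP(\phi)$ and $\CSP(\phi')$ agree on connected instances is false. Take unary symbols $R,S$ and a binary $E$, and let $\phi := \forall x,y\,(\lnot R(x)\vee\lnot S(y))$. The only clause is disconnected, so $\phi'$ is vacuous and $\CSP(\phi')$ is everything. But the \emph{connected} two-vertex instance with $E(a,b)$, $R(a)$, $S(b)$ is not in $\CSP(\phi)$: any homomorphic image still has an $R$-vertex and an $S$-vertex, hence fails $\phi$. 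So your conjunctive reduction (output the list of components) does not compute $\CSP(\phi)$. The informal justification ``a disconnected clause can only fail on a structure containing all pieces, and such a structure if connected is too large'' conflates the forbidden pattern with the model: nothing prevents a small connected model from containing all the pieces.

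The paper closes the loop differently and more simply: it applies Lemma~\ref{lem:B->BB2} \emph{already at this step}, not only for the ``moreover.'' The lemma does not drop disconnected clauses; instead it \emph{expands the input signature} by a fresh binary symbol $E'$ and produces a connected sentence $\Psi$ in which every clause of $\phi$ is guarded by $E$-edges. The forward reduction $\bA\mapsto(\bA,A^2)$ and the backward conjunctive reduction (return the $\tau$-reducts of the connected components of $(\bB,E')$) then give the required conjunctive equivalence between $\Phi$ and $\Psi$, and Corollary~\ref{cor:SUCSNP} finishes. The signature expansion is exactly the missing idea: without introducing a new relation to tie components together, you cannot in general replace $\phi$ by a connected sentence while preserving $\CSP(\phi)$ on connected inputs.
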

\begin{proof}
    It follows form Corollary~\ref{cor:SUCSNP} that every problem in the second itemized class
    belongs to the third one. It follows from Corollary~\ref{cor:FBstructures} that if $\bB$
    is a finitely bounded structure, then $\CSP(\bB)$ is in monotone extensional $\SNP$.
    By Lemma~\ref{lem:B->BB2} we see that if $\CSP(\bB)$ is in monotone extensional SNP,
    then there is a connected monotone extensional SNP sentence $\Phi$ which is log-space equivalent
    to $\CSP(\bB)$. This proves that the ``moreover'' statement holds. 
    Clearly, every problem in the second itemized class belongs to the first itemized class
    of problems. Finally, by Lemma~\ref{lem:B->BB2} we know that for every monotone extensional 
    $\SNP$ sentence $\Phi$ there is a polynomial-time conjunctive-equivalent monotone extensional 
    $\SNP$. 
\end{proof}

\section{Candidate NP-intermediate problems in extensional ESO}
\label{sec:NP-intermediate}
Here we consider the well-known computational problems of
Graph Isomorphism~\cite{groheComACM63} and Monotone Dualization~\cite{eiterDAM156}.
The complexity of each of these problems has been open for more than 50 years, and
some authors regard them as candidate NP-intermediate problems
(see, for instance, the Wikipedia entry for candidate NP-intermediate problems).
We show that for each of these problems there is an extensional ESO sentence
$\Phi$ such that $\Phi$ is polynomial-time equivalent to the corresponding
problem.

\paragraph{Graph Isomorphism.} 
The graph isomorphism problem (GI) is one of the most studied problems
in algorithmic graph theory (see, e.g.,~\cite{groheComACM63} for an overview on this problem).
One of the major breakthroughs in this area is the quasipolynomial-time algorithm
presented by Babai~\cite{babaiGI}. Despite intensive research over the past decades, the
complexity of the graph isomorphism problem remains one of the problems in NP that is not known to be in P,
and thus, it is considered a candidate NP-intermediate problem.

It is well known that the graph isomorphism problem is polynomial-time equivalent
to the graph isomorphism completion problem. The latter  takes as an input
a pair of graphs $(\bG,\bH)$ and a partial function $f'\colon U\subseteq G\to H$,
and the task is to decide if there is an isomorphism $f\colon \bG\to \bH$ such that
$f_{|U} = f'$. We now show that the graph isomorphism completion problem is in extensional ESO.

    Let $U_1$ and $U_2$ be unary relation symbols, $E$ a binary relation symbol,
    and $I$ and $I'$ binary relation symbols that will encode  isomorphisms
    and partial isomorphisms, respectively. Here, the input signature is
    $\tau:=\{U_1,U_2,E,I'\}$.
    Let $\textrm{ISO}(I)$ be a first-order $\{U_1,U_2,E\}$-formula stating
    that $I$ defines an isomorphism from the graph with vertices in $U_1$
    to the graph with vertex set $U_2$, e.g.,
    \begin{align*}
        \textrm{ISO}(I):= \forall x,y\; &(I(x,y)\Rightarrow U_1(x)\land U_2(x)) \\
         \land \; \forall x,y,z\; &\big ((I(x,y)\land I(x,z) \Rightarrow y = z) \land (I(y,z)\land I(y,z) \Rightarrow x = y) \big )\\
        \land \; \forall x \exists y\; & \big ( (U_1(x)\Rightarrow I(x,y))\land (U_2(x)\Rightarrow I(y,x)) \big ) \\
        \land \; \forall x_1,x_2,y_1,y_2\; & \big ( (E(x_1,y_1) \land I(x_1,x_2)\land I(y_1,y_2)) \Rightarrow E(y_1,y_2) \big ) .
    \end{align*}
    Hence, if $\phi$ is a first-order $\{U_1,U_2,E\}$-sentence stating that $E$ is a
    symmetric irreflexive relation that connects only vertices within $U_1$ or within $U_2$, 
    we get that Graph Isomorphism Completion can be expressed by the extensional ESO sentence
    \[
        \Psi:=\exists I.\; \forall x,y \big ((I'(x,y)\Rightarrow I(x,y)) \land \textrm{Iso}(I)\land \phi \big ).
    \]
    Clearly, every GI completion instance $X$ can be encoded as a $\tau$-structure $\bA$
    such that $\bA\models \Psi$ if and only if $X$ is a yes-instance. Also notice that
    if an input $\tau$-structure $\bA$ is such that $I'$ does
    not define a partial isomorphism from the graph with vertex set $U_1(\bA)$ to the
    graph with vertex set $U_2(\bA)$, then $\bA\not\models \Psi$. In this way, we obtain
    a polynomial-time equivalence between the graph isomorphism completion problem and deciding
    $\Psi$.

\paragraph{Monotone Dualization.} We assume basic familiarity with propositional logic
(see, e.g.,~\cite[Section~2]{eiterSICOMP32}).
Our presentation of the monotone dualization problem is similar to the presentation given in~\cite{eiterDAM156},
and we refer to that survey for a detailed exposition.
The \emph{dual} of a boolean function $f\colon\{0,1\}^n\to \{0,1\}$
is the function $f^d(x_1,\dots, x_n):= \bar f(\bar x_1, \dots, \bar x_n)$.
In particular, if $\phi$ is a CNF formula representing $f$ (where the constant $0$
is represented by an empty clause, and the constant $1$ is represented by an empty
set of clauses),  then the dual $f^d$ of $f$ can be easily obtained by interchanging
the symbols $\land$ and  $\lor$. 
Computing the CNF of the dual of a boolean function $f$ presented in CNF traces back
to the middle of the last century (see, e.g,~\cite{maghout1959}), and it is one of
few problems whose complexity remains unclassified~\cite{eiterDAM156}.

A boolean function $f\colon\{0,1\}^n\to \{0,1\}$ is \emph{monotone} if 
$f(\bar x)\le f(\bar y)$ whenever $x_i\leq y_i$ for each argument $i\in[n]$. 
Equivalently, a boolean function is monotone if it can be represented by
a boolean formula $\phi$ in CNF with no negative literals.

\vspace{0.4cm}
\noindent\textbf{Monotone Dual}

 \textsc{Input:} Functions $f$ and $g$ represented by
 formulas $\phi$ and $\psi$ in CNF without negative literals.
 
 \textsc{Question:} If $g$ the dual of $f$?
\vspace{0.3cm}

Clearly, this problem is in coNP: if there is an evaluation
$v_1,\dots, v_x$ of the variables $x_1,\dots, x_n$ such that
$\phi^d(v_1,\dots, v_n)$ is true, and $\psi(v_1,\dots, v_n)$ is
false, then $g$ is not the dual of $f$ (here, $\phi^d$ is 
the boolean formula in DNF obtained by swapping the
symbols $\land$ and $\lor$ in $\phi$). This
problem can be solved in quasi-polynomial time~\cite{fredmanJA21}, and
according to~\cite{eiterDAM156} is has been open for over 40
years now, whether it can be solved in polynomial time.
We construct an extensional ESO sentence $\Phi$
such that deciding $\Phi$ is polynomial-time equivalent
to the complement of Monotone Dual, i.e., deciding whether 
$g$ and $f$ are not duals.

To begin with, we will consider a boolean formula
$\phi$ in CNF or DNF, and without 
negative literals, empty clauses, and empty monomials.
We will encode such formulas as
structures $F(\phi)$
with the binary signature $\{C,EQ\}$ as follows.
The vertices are occurrences of variables (i.e., for each
clause $c$ and each variable $x$ in $c$, there is a vertex
$v_{x,c}$), and the interpretations of $C$ and $EQ$ are
equivalence relations where 
\begin{itemize}
\item each class of $C$  consists of all occurrences of a variable appearing
in the same clause, and 
\item each class of $EQ$ consists of the occurrences of the same variable.
\end{itemize} 
In symbols, $(v_{x,c},v_{y,c'})\in C$ iff $c = c'$, and
$(v_{x,c},v_{y,c'})\in EQ$ iff  $x = y$.
In particular,  if $\phi$ is in CNF and $\phi^d$ is the dual DNF formula, 
then $F(\phi) = F(\phi^d)$. Also, for every $\{C,EQ\}$-structure
$\bA$ where $C$ and $EQ$ are equivalence relations, there is
a monotone CNF formula $\phi$ such that $F(\phi) = \bA = F(\phi^d)$.

We now consider a unary predicate $U_1$ 
that will encode an evaluation of the variables in the
formula $\phi$. In order for $U_1$ to encode 
a valid evaluation we must impose the first-order restriction
stating that if $(x,y)\in EQ$, then $x\in U_1$ if and only
if $y \in U_1$ (i.e., different occurrence of the same variable
must receive the same value). So let $\delta$ be
the (universal) first-order $\{U_1,C,EQ\}$-sentence stating
that $C$ and $EQ$ are equivalence relations and that $U_1$ is
preserved by the relation $EQ$ in the sense explained above. Clearly, structures
that satisfy $\delta$ are in one-to-one correspondence with
evaluations of formulas in CNF (and in DNF) without negative literals, empty clauses, and empty monomials.

Consider the following first-order $\{U_1,C,EQ\}$-sentences
\begin{align*}
    \mathtt{true_C}:= \forall x\exists y\; (C(x,y) \land U_1(y))~~ \text{ and }~~\mathtt{true_D}:=\exists x\forall y\; (C(x,y) \Rightarrow U_1(y)).
\end{align*}
Observe that if $\phi$ is a monotone CNF formula with
variables $X$, then an evaluation $e\colon X\to \{0,1\}$ 
makes $\phi$ true if and only if the structure
$(F(\phi), U_1:=e^{-1}(1))$ satisfies $\mathtt{true_C}$. 
Similarly, $e$ 
makes the dual formula $\phi^d$ in DNF true
if and only if $(F(\phi), U_1=e^{-1}(1))\models \mathtt{true_D}$. 
In fact, the 
$\{U_1,C,EQ\}$-expansions of $F(\phi)$ that satisfy
$\mathtt{true_C}$ (resp.\  $\mathtt{true_D}$)
are in one-to-one correspondence with the
satisfying evaluations of $\phi$ (resp.\ of $\phi^d$).

To obtain an extensional ESO sentence that is polynomial-time
equivalent to the problem Monotone Dual, we need three more unary
symbols $U_f, U_g,$ and $U_1'$. Let $\rho$ be a first-order
sentence stating the following:
\begin{itemize}
    \item the  interpretations of $U_f$ and $U_g$
    are total and disjoint, i.e., every vertex belongs to 
    exactly one of $U_f$ and $U_g$, 
    \item if a pair of vertices $x$ and $y$ are connected by an 
    edge in $C$, then $x$ and $y$ both belong to $U_f$ or
    both to $U_g$, and
    \item for each vertex $x\in U_f$ there is a vertex
    $y\in U_g$ such that $xy\in EQ$, and vice versa.
\end{itemize}
Given a pair of CNF formulas $\phi$ and $\psi$ on the 
same set of variables without negative literals and empty clauses, we construct a
structure $F(\phi,\psi)$ with  signature $\{U_f, U_g, U_1', C, EQ\}$
as follows.
This structure is obtained from the disjoint union  $F(\phi)+ F(\psi)$
where  we colour the vertices in  $F(\phi)$ with $U_f$ and  the vertices in 
$F(\psi)$ with $U_g$, the interpretation of $U_1'$ is empty, and we 
modify the interpretation of $EQ$ by connecting all pairs of vertices $x$
and $y$ that represent the same variable 
(in particular, $EQ(F(\phi))\cup EQ(F(\psi))\subseteq EQ(F(\phi,\psi))$).
Clearly, $F(\phi,\psi)\models\rho\land \delta$.

Finally, for $\mathtt{X}\in \{\mathtt{C,D}\}$ and $h\in \{f,g\}$,
we denote by $\mathtt{true_X}(U_h)$ the restriction of $\mathtt{true_X}$
to the substructure with domain $U_h$ (which can be written as a
first-order sentence with symbols in $\{C,EQ, U_1, U_h\}$). With these
sentences we define the extensional ESO sentence
\begin{align*}
     \Gamma:= \exists U_1  \big ( (U'_1\Rightarrow U_1) \land & \; \rho \land\delta \\
    \land & \left( (\mathtt{true_D}(U_f)\land \lnot \mathtt{true_C}(U_g))
    \lor (\lnot \mathtt{true_D}(U_f)\land  \mathtt{true_C}(U_g)) \right ) \big).
\end{align*}
A structure of the form $F(\phi, \psi)$ 
satisfies $\Gamma$ if and only if there is an evaluation of the variables that either make
exactly one of $\phi^d$ or $\psi$ true, and the other one false.
Therefore, on an input $(\phi,\psi)$ to Monotone Dual, where $\phi$ and $\psi$ are
formulas CNF formulas without negative literals and empty clauses that represent
the Boolean functions $f$ and $g$, respectively,
we have that $g$ is not the dual of $f$ if and only if $F(\phi, \psi)$ satisfies
$\Gamma$. This gives us a polynomial-time reduction from
Monotone Dual to deciding $\lnot \Gamma$.

Now we show that conversely,
deciding $\lnot \Gamma$ reduces in polynomial time to Monotone Dual.
First, fix a pair of formulas $\phi_X$ and
$\psi_X$ in monotone CNF over the same set of variable such that
$\psi_X$ is not the dual of $\phi_X$. The reduction works as follows. 
Given a $\{U_1', U_f, U_g, C, EQ\}$-structure $\bA$ we first
first verify whether $\bA\models \rho\land\delta$, and if not, the reduction
returns the pair $(\phi_X, \psi_X)$.  Otherwise, we construct $\phi$
and $\psi$ as follows. 
The variables of $\phi$ and of $\psi$ are the $EQ$-equivalence
classes, and  for each $C$-equivalence class $Y$ in $U_f(\bA)$
(resp.\ in $U_g(\bA)$) we add a clause in $\phi$ (resp.\ in $\psi$)
containing the $EQ$-equivalence classes that intersect $Y$. Finally, 
if some vertex $v$ is colored with $U_1'$, then we remove all clauses from $\phi$ and $\psi$ that contain $v$.
It readily follows from the arguments above that $\bA\models\Gamma$ if
and only if $\phi$ is not the dual of $\psi$. Hence, the
reduction that maps $\bA$ to $(\phi_X,\psi_X)$ if $\bA\not\models\delta\land \rho$, 
and to $(\phi,\psi)$ otherwise, is a polynomial-time reduction from deciding $\lnot\Gamma$
to the Monotone Dual problem. Both reductions together show that
Monotone Dual is polynomial-time equivalent to deciding $\lnot \Gamma$.

\paragraph{Conjecture.} We have seen that surjective finite-domain CSPs (Section~\ref{ex:finite-CSPs})
and CSPs of finitely bounded structures (Corollary~\ref{cor:FBstructures}) are expressible in ESO.
Moreover,  we proved that extensional ESO has the same computational power as HerFO (Theorem~\ref{thm:meESO-extESO-coHerFO}), 
and so, the tractability meta-problem for extensional ESO is undecidable (Corollary~\ref{cor:tractability-problem}). 
Finally, we argued in this section that extensional ESO can express problems
that are polynomial-time equivalent to Graph Isomorphism, and to
the complement of Monotone Dualization. These observations make us believe
that the following is true.

\begin{conjecture}\label{conj:intermediate}
    There is an extensional ESO sentence $\Phi$ such that
    deciding $\Phi$ is NP-intermediate.
\end{conjecture}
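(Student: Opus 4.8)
The plan for attacking Conjecture~\ref{conj:intermediate} would proceed along one of two routes, and in both a genuine obstacle remains. The first route is conditional and rests on the reductions of Section~\ref{sec:NP-intermediate}: since Graph Isomorphism is polynomial-time equivalent to deciding a fixed extensional $\ESO$ sentence $\Psi$, it would be enough to prove that Graph Isomorphism is neither in $\P$ nor $\NP$-complete. The latter is widely expected --- it follows, for instance, from $\NP\not\subseteq\mathsf{coAM}$ via the Arthur--Merlin protocol for Graph Non-Isomorphism --- and the analogous reasoning applies to Monotone Dualization and the sentence $\lnot\Gamma$. So one sub-plan is simply to resolve the complexity status of one of these classical problems; the obvious difficulty is that this has been open for decades.

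The second, more self-contained, route is to build an $\NP$-intermediate problem directly inside the fragment, and the natural place to do so is among $\CSP$s of finitely bounded structures, which by Corollary~\ref{cor:FBstructures} all lie in monotone extensional $\SNP$, hence in extensional $\ESO$. Concretely, I would start from a finitely bounded template in a finite signature whose $\CSP$ is $\NP$-complete --- say a finite non-bipartite graph, where $\CSP(\bB)$ is graph $3$-colourability --- and run a Ladner-style ``blowing holes'' diagonalization \emph{at the level of forbidden sets}: enumerate polynomial-time machines slowly, and on the intervals of input length where the current machine fails to decide the current $\CSP$, append finitely many new forbidden substructures so that all instances of those lengths become trivial, taking care to keep the forbidden set finite throughout. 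The steps would then be (i) set up this template-level diagonalization; (ii) check that every stage stays inside the finitely bounded fragment (finite signature, finite bound set, and --- if one wants the connected variant of Corollary~\ref{cor:FBstructures} --- connected bounds); and (iii) prove the two inequalities $\CSP(\bB)\notin\P$ and that $\CSP(\bB)$ is not $\NP$-complete. By Theorem~\ref{thm:meESO-extESO-coHerFO} this is the same task as exhibiting a first-order sentence $\phi$ for which the hereditary class $\HER(\phi)$ is $\coNP$-intermediate.

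The hard part --- common to every route --- is that there is currently no technique for certifying an explicit, ``natural'' problem as $\NP$-intermediate: Ladner's theorem yields only artificial languages, and the whole difficulty of step (iii) above is to carry out the diagonalization \emph{without leaving} the rigid shape of extensional $\ESO$ (a bounded-quantifier first-order body over monotone existentially quantified predicates). Encoding the diagonalization into the obstruction set of a finitely bounded template, as sketched, is the most promising way to stay inside the fragment, but making the bookkeeping work --- in particular ensuring finiteness of the bound set while still killing all ``hard'' input lengths --- is where a new idea seems to be needed. We regard this, or deducing the conjecture from a standard complexity assumption, as the main open problem left by this work.
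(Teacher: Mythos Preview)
The statement in question is a \emph{conjecture} in the paper, not a theorem; the paper offers no proof, only the supporting evidence of Section~\ref{sec:NP-intermediate} (Graph Isomorphism and Monotone Dualization are polynomial-time equivalent to problems in extensional $\ESO$) and then explicitly leaves the statement open. Your write-up correctly recognizes this and is framed as a discussion of possible attacks rather than as a proof, which is exactly the appropriate stance and matches the paper's own treatment.

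One remark on your second route. The Ladner-style diagonalization you sketch at the level of forbidden substructures cannot, as stated, land inside the class of finitely bounded $\CSP$s: the diagonalization must defeat infinitely many polynomial-time machines (and infinitely many reductions), and each ``hole'' you blow forces new bounds, so the limiting bound set is necessarily infinite. You do flag this at the end, but the earlier phrasing ``taking care to keep the forbidden set finite throughout'' is misleading --- finiteness at every finite stage is automatic, whereas finiteness of the \emph{limit} is exactly what fails. If anything is to work along these lines, one would need the holes to be expressible by a single fixed universal sentence (so that the resulting problem stays within $\HER(\phi)$ for one $\phi$, equivalently within extensional $\ESO$ by Theorem~\ref{thm:meESO-extESO-coHerFO}); the paper's own intermediate construction in Theorem~\ref{thm:non-dicho} does not achieve this, since the set ${\mathcal T}_0$ there depends on a diagonalizing function $g$ and is not first-order definable. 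So the gap you identify is real, and neither the paper nor your proposal closes it.
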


\color{black}

\section{Limitations of the computational power}
\label{sect:limitations}

In this section we show that 
extensional ESO 
is not  NP-rich (unless E = NE), i.e., that there are sets $L$ in NP which 
are not polynomial-time equivalent to a problem in extensional ESO. To do so, we study
disjunctive self-reducibility; a notion in computational complexity that traces back to~\cite{meyerMIT79,ICALP-1976-Schnorr}.

\subsection{Disjunctive self-reducible sets}
Intuitively speaking, a set $L$ is \emph{self-reducible} if deciding whether $x$ belongs to $L$ reduces
in polynomial-time to deciding $L$ for  ``smaller'' instances than $x$. 
Moreover, it is \emph{disjunctive
self-reducible} if  there is a deterministic polynomial-%time algorithm that for input $x$ computes 
smaller instances $x_1,\dots, x_n$ such that $x\in L$ if and only if $x_i\in L$ for some $i\in [n]$.
A typical example of a disjunctive self-reducible set is SAT: a boolean formula 
$\phi(x_1,\dots, x_n)$ is satisfiable if and only if at %least one of the two smaller formulas
$\phi(0,x_2,\dots, x_n)$ and $\phi(1,x_2,\dots, x_n)$ is satisfiable. This concept has played a
major role in the complexity theory literature on the class NP, see, e.g.,~\cite{Ambos-SpiesK88,Hemaspaandra2020}.

Let us formally define disjunctive self-reducible sets (also see also~\cite{meyerMIT79,ICALP-1976-Schnorr}).
A polynomial-time computable partial order $<$ on $\{0,1\}^\ast$ is \emph{OK}
if and only if there is a polynomial $p$ such that
\begin{itemize}
    \item every strictly decreasing chain $x_1 < \cdots < x_n$ of words from $\{0,1\}^\ast$ is shorter than $p(x_n)$, i.e.,
    $n \le p(x_n)$.
    \item for all $x,y\in \{0,1\}^\ast$, $x < y$ implies $|x| \le p(|y|)$.
\end{itemize}
A set $L\subseteq \{0,1\}^\ast$ is \emph{self-reducible} if there are an OK partial order
$<$ on $\{0,1\}^\ast$ and a deterministic polynomial-time oracle machine $M$ that accepts $L$ 
and asks its oracle queries about words smaller than the input $x$, i.e.,
on input $x$, if $M$ asks a query to its oracle for $L$ about $y$, then $y < x$. 
We say that $L$ is \emph{disjunctive self-reducible}  
if on every input $x$ the machine $M$ either
\begin{itemize}
    \item decides whether $x\in L$ without calling the oracle, or
    \item computes queries $x_1,\dots, x_n \in \{0,1\}^*$ strictly smaller than $x$ (with respect to $<$)
    such that
    \[
    x\in L \text{ if and only if } \{x_1,\dots, x_n\}\cap L\neq \varnothing.
    \]
\end{itemize}
Clearly, every set in $\PO$ is disjunctive self-reducible, and it follows from~\cite{koJCSS26}
that every disjunctive self-reducible set is in $\NP$.

\begin{lemma}\label{lem:extESO->d-self-reducible}
    If a class of structures $\calC$ is  expressible in extensional $\ESO$, then $\calC$
    is disjunctive self-reducible. In particular, $\CSPs$ of finitely bounded structures are
    disjunctive-self-reducible.
\end{lemma}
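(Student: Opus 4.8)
The plan is to show directly that any class $\calC$ described by an extensional ESO sentence $\Psi$ is disjunctive self-reducible, by exhibiting an OK partial order on encodings of instances together with a self-reduction that either decides membership outright or branches disjunctively. Recall that $\Psi$ has the form $\exists R_1,\dots,R_n\, Q_1 x_1\cdots Q_m x_m.\,\psi$, where $\psi$ is quantifier-free and contains, for each $i$, the conjunct $\forall \overline y\,(R_i'(\overline y)\Rightarrow R_i(\overline y))$ with $R_i'$ an input symbol not occurring elsewhere. The key observation is that deciding $\bA\models\Psi$ is exactly deciding whether there is an expansion $\bA^+$ of $\bA$ to the symbols $R_1,\dots,R_n$ with $R_i^{\bA}\subseteq R_i^{\bA^+}$ (where by a small abuse $R_i^{\bA}$ means $(R_i')^{\bA}$) such that $\bA^+\models Q_1 x_1\cdots Q_m x_m.\,\psi$, i.e., we must fill in the $R_i$'s \emph{above} the given lower bounds.

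First I would set up the self-reduction on the enriched signature: an input is a $\tau$-structure $\bA$, and we think of it as carrying ``partial'' interpretations of $R_1,\dots,R_n$ via the $R_i'$. The order $<$ is defined on encodings so that $\bA' < \bA$ whenever $\bA'$ is obtained from $\bA$ by keeping the same domain and the same interpretation of all non-extended symbols, but \emph{enlarging} at least one of the $R_i'$ (equivalently, moving at least one tuple into some $R_i'$ that was not there before). Since the domain is fixed and there are only finitely many $R_i$ of bounded arity, every strictly increasing chain of such enlargements has length at most $\sum_i |A|^{r_i}\le \mathrm{poly}(|A|)\le \mathrm{poly}(|\bA|)$, and all structures in a chain have the same encoding length up to a polynomial; so $<$ is OK. The self-reduction works as follows: on input $\bA$, first check in polynomial time whether $\bA^{\mathrm{max}}\models Q_1 x_1\cdots Q_m x_m.\,\psi$, where $\bA^{\mathrm{max}}$ is the expansion in which every $R_i$ is set to the \emph{full} relation $A^{r_i}$ — if this fails, then no expansion works (monotonicity in the wrong direction is not available in general, so this step needs care), otherwise... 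Actually the cleaner branching is: if the current $\bA$, read as an expansion by taking $R_i:=R_i'$, already satisfies $Q_1 x_1\cdots Q_m x_m.\,\psi$, accept; else, for each choice of a tuple $\overline a\notin R_i'^{\bA}$ and each $i\in[n]$, produce the query $\bA_{i,\overline a}$ obtained by adding $\overline a$ to $R_i'$; then $\bA\models\Psi$ iff $\bA_{i,\overline a}\models\Psi$ for \emph{some} such query, since a satisfying expansion above $\bA$ either equals the minimal one or properly extends one of the $R_i'$ by at least one tuple, hence lies above some $\bA_{i,\overline a}$. This is precisely a disjunctive branching into polynomially many strictly smaller instances.

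I expect the main obstacle to be the base case of the recursion and the evaluation step: to \emph{decide} membership when no strictly smaller query is produced, we must check whether the ``current'' expansion (where $R_i$ is interpreted as the accumulated $R_i'$) satisfies the fixed first-order sentence $Q_1 x_1\cdots Q_m x_m.\,\psi$. Evaluating a fixed first-order sentence on a finite structure is in polynomial time (indeed logspace), so this is fine; the subtlety is that the recursion tree has depth at most $\mathrm{poly}(|\bA|)$ but we must make sure the oracle machine $M$ itself only makes \emph{oracle} queries about structures strictly below the input, and performs the first-order check \emph{without} the oracle — which is exactly what the definition of disjunctive self-reducibility permits. Finally, for the ``in particular'' clause, I would invoke Corollary~\ref{cor:FBstructures}, which gives that $\CSP(\bS)$ for a finitely bounded $\bS$ is in (monotone extensional) $\SNP$, hence in extensional $\ESO$, so disjunctive self-reducibility follows from the first part. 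One point to double-check in writing this up is that the encoding of structures is chosen so that adding a tuple to a relation strictly \emph{increases} the string in the order while keeping length polynomially bounded — routine, but it must be stated explicitly to certify that $<$ is OK.
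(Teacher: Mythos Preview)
Your proposal is correct and follows essentially the same approach as the paper: define the OK order by strict enlargement of the $R_i'$ relations on a fixed domain, check in polynomial time whether the minimal expansion (setting $R_i := R_i'$) already satisfies the first-order part, and otherwise branch disjunctively over all one-tuple enlargements of some $R_i'$; the ``in particular'' clause is handled by Corollary~\ref{cor:FBstructures} exactly as you suggest. The paper's proof is slightly terser (it treats the case of a single quantified $R$ and asserts the general case is similar), and your brief detour via $\bA^{\max}$ is a false start you correctly abandon, but the core argument is the same.
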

\begin{proof}
    We prove the claim for the case that $\Phi$ has one existentially quantified relation symbol 
    $R$ or arity $r$ --- the general case follows with similar arguments. Let $R'$ be the
    input symbol such that $\Phi$ contains the clause $R'(x) \Rightarrow R(x)$ and $R'$ does not
    appear in any other clause of $\Phi$. Given an input $\tau$-structure $\bA$ we first test whether 
    the expansion $\bA'$ with $R^{\bA'} = R'$ satisfies the first-order part of $\Phi$.
    If yes, we accept, and otherwise we return the structures obtained from $\bA$
    by adding one new tuple to the interpretation of $R'$. It is clear that the
    order  $\bB < \bA$ if $A = B$, 
    $S^\bA = S^\bB$ for every $S\in\tau\setminus \{R'\}$, 
    and $(R')^{\bA'}\subseteq  (R')^{\bB'} $  is an OK poset witnessing that the
    previous reduction is a disjunctive self-reduction. 
\end{proof}

In the rest of this section we study disjunctive self-reducible sets. In particular, we
show that the class of disjunctive self-reducible sets is not NP-rich, unless E = NE (Theorem~\ref{thm:d-self-reducible-not-NP-rich});
and also prove that there are NP-intermediate disjunctive self-reducible sets unless P = NP
(Theorem~\ref{thm:non-dicho}). The former, together with Lemma~\ref{lem:extESO->d-self-reducible}
implies that extensional ESO does not have the full computational power of NP
(Corollary~\ref{cor:notNPrich}), unless E = NE.

\subsection{NP-intermediate disjunctive self-reducible sets}

We consider the following family of disjunctive self-reducible sets.

\begin{lemma}\label{lem:uNP-Ffree}
    Let $\tau$ be a finite relational signature and $\calF$ a (not necessarily finite) class of finite
    $\tau$-structures. If $\calF$ is in $\PO$, 
    then the class of structures $\bA$ that contain some substructure $\bF$ from $\calF$ is disjunctive self-reducible.
\end{lemma}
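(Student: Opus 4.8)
\emph{Plan.} Write $\calC$ for the class of $\tau$-structures $\bA$ that possess an induced substructure belonging to $\calF$. The plan is to exhibit an explicit disjunctive self-reduction for $\calC$ together with a witnessing OK partial order. On an input word $x$: if $x$ does not encode a valid finite $\tau$-structure, reject immediately (no oracle call). Otherwise $x$ encodes some $\bA$; run the polynomial-time algorithm witnessing $\calF\in\PO$ on $\bA$. If $\bA\in\calF$, accept (no oracle call). If $\bA\notin\calF$ and $|A|=1$, reject (no oracle call), since a one-element structure has no proper substructure and structures are non-empty. In the remaining case ($\bA\notin\calF$, $|A|\ge 2$), output the query list $\{\bA_a : a\in A\}$, where $\bA_a$ denotes the substructure of $\bA$ induced on $A\setminus\{a\}$.

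\emph{Correctness of the disjunctive step.} I would argue the equivalence
\[
\bA\in\calC \quad\Longleftrightarrow\quad \bA\in\calF \ \text{ or }\ \bA_a\in\calC \text{ for some } a\in A .
\]
For the forward direction, if $\bA\in\calC$ is witnessed by an induced substructure $\bF\in\calF$ with domain $S\subseteq A$, then either $S=A$, so $\bA\cong\bF\in\calF$ and the first disjunct holds, or $S\subsetneq A$, and picking any $a\in A\setminus S$ makes $\bF$ an induced substructure of $\bA_a$, so $\bA_a\in\calC$. Conversely, $\bA\in\calF$ gives $\bA\in\calC$ trivially (it contains itself), and if $\bA_a\in\calC$ then any $\calF$-substructure of $\bA_a$ is also an induced substructure of $\bA$, so $\bA\in\calC$. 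Thus, combined with the fact that the direct-decision branches answer correctly, the machine with oracle $\calC$ accepts exactly $\calC$, and each nontrivial input produces queries strictly smaller in the order described below with the required disjunctive condition. The query list is clearly computable from $\bA$ in polynomial time.

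\emph{The OK order.} Define, on words encoding $\tau$-structures, $\bB<\bA$ to hold iff $\bB$ is (the encoding of) an induced substructure of $\bA$ on a \emph{proper} subset of $A$; on every other pair of words let $<$ be undefined. Transitivity of $<$ is transitivity of ``induced substructure on a proper subdomain'', and $<$ is polynomial-time decidable since domains and relations are listed explicitly in the encodings. Each query satisfies $\bA_a<\bA$ because $|A\setminus\{a\}|=|A|-1<|A|$. For the two OK conditions: along any strict chain $\bA_1<\bA_2<\cdots<\bA_n$ the domain sizes strictly increase, so $n\le|A_n|$, which is at most the length of the encoding of $\bA_n$; and if $\bB<\bA$ then $|B|<|A|$, so the encoding of $\bB$ has length polynomially bounded in that of $\bA$. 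Hence $<$ is OK, and the procedure above is a disjunctive self-reduction, so $\calC$ is disjunctive self-reducible.

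\emph{Expected obstacle.} This lemma is essentially bookkeeping rather than deep: the only delicate points are the treatment of ill-formed inputs and of one-element structures in the direct-decision branch, and the verification that ``induced substructure on a proper subdomain'' really yields an OK order --- all of which are immediate once one observes that the domain size strictly drops at every step of the reduction. The hypothesis that $\calF$ may be infinite causes no difficulty, since $\calF$ is consulted only through the $\PO$ membership test and is never enumerated.
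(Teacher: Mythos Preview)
Your proposal is correct and follows exactly the same approach as the paper: test whether $\bA\in\calF$, accept if so, and otherwise return the substructures obtained by deleting one vertex, with the OK order given by the induced-substructure relation. You simply spell out details (ill-formed inputs, the $|A|=1$ base case, the correctness equivalence, and the verification of the two OK conditions) that the paper leaves implicit in its two-sentence proof.
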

\begin{proof}
    Given an input structure $\bA$, we first test whether $\bA\in \calF$. If yes, we accept, otherwise,
    we return the substructures obtained from $\bA$ by removing exactly one element from the domain. 
    Here, the order $\bA < \bB$ defined by $\bA$ being a substructure of $\bB$ is clearly an OK
    poset for which the previous reduction is a disjunctive self-reduction. 
\end{proof}

For our next result we will use a construction of coNP-intermediate CSPs from
Bodirsky and Grohe~\cite{BodirskyGrohe} which uses the following set of tournaments. 
The \textit{Henson set} is the set ${\mathcal T}$ of tournaments $\mathbb T_n$ defined for
 positive integer $n\ge 5$ as follows. The vertex set of $\mathbb T_n$ is $[n]$
 and it contains the edges
 \vspace{-2.4pt}
 \begin{itemize}[itemsep = 0.8pt]
     \item $(1,n)$,
     \item $(i,i+1)$ for $i\in [n-1]$, and
     \item $(j,i)$ for $j > i+1$ and $(j,i)\neq (n,1)$.
 \end{itemize}

It is straightforward to observe that, for any fixed (possibly infinite) set of tournaments $\calF$
(such as $\calT$), the class of oriented graphs that  do not embed any tournament from $\calF$ 
is of the form $\CSP(\bB_{\mathcal F})$ for some countably infinite structure $\bB_{\mathcal F}$
(we may even choose $\bB_{\mathcal F}$ to be homogeneous). In the concrete situation of the class ${\mathcal T}$ defined above,
$\CSP(\bB_{\mathcal T})$ is an example of $\coNP$-complete CSP (see, e.g.,~\cite[Proposition 13.3.1]{Book}).

\begin{theorem}\label{thm:non-dicho}
There is no $\P$- versus $\NP$-complete dichotomy for disjunctive self-reducible sets.
\end{theorem}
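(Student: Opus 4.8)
The plan is to exhibit a disjunctive self-reducible set that is $\NP$-intermediate, by taking the complement of one of the $\coNP$-intermediate CSPs of Bodirsky and Grohe~\cite{BodirskyGrohe} built from sub-families of the Henson set $\calT$, and then invoking Lemma~\ref{lem:uNP-Ffree}. Throughout one assumes $\P \neq \NP$; otherwise every set in $\NP$, hence every disjunctive self-reducible set, lies in $\P$ and the statement is to be read modulo this hypothesis.

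As a warm-up I would first record the ``top'' endpoint: the complement of $\CSP(\bB_{\calT})$ — the class of finite oriented graphs embedding some $\bT_n \in \calT$ — is $\NP$-complete, since $\CSP(\bB_{\calT})$ is $\coNP$-complete, and it is disjunctive self-reducible by Lemma~\ref{lem:uNP-Ffree}: the Henson set $\calT$ is in $\P$ because, given a finite $\{E\}$-structure on $m$ vertices, one checks $m \ge 5$ and then decides isomorphism to $\bT_m$ in polynomial time (each $\bT_n$ is transitive except for the single backward arc $(n,1)$, so a linear order witnessing such an isomorphism can be recovered and verified greedily). Next, for a set $A \subseteq \{\, n : n \ge 5 \,\}$ I set $\calT_A := \{\bT_n : n \in A\}$ and
\[
  L_A := \{\, \bA : \bA \text{ embeds some } \bT_n \text{ with } n \in A \,\} = \overline{\CSP(\bB_{\calT_A})}.
\]
If $A \in \P$ then $\calT_A \in \P$ (one additionally checks $n \in A$, which costs time polynomial in $\log m$), so $L_A$ is disjunctive self-reducible by Lemma~\ref{lem:uNP-Ffree}.

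Two further observations make Ladner's argument applicable to this family. If $A$ is finite, then $\calT_A$ is finite and $L_A$ is a finite union of problems of the form ``contains a fixed template as a substructure'', each decidable in polynomial time, so $L_A \in \P$. If $A \supseteq \{\, n : n \ge N \,\}$ for some $N$, then the $\NP$-complete set $\overline{\CSP(\bB_{\calT})}$ reduces to $L_A$ in polynomial time: given $\bA$, test whether $\bA$ embeds some $\bT_n$ with $5 \le n < N$ (finitely many fixed templates), output the fixed yes-instance $\bT_N \in L_A$ if so, and output $\bA$ otherwise; correctness uses $L_A \subseteq \overline{\CSP(\bB_{\calT})}$ together with the fact that if $\bA$ embeds some $\bT_n$ with $n \ge 5$ but none with $n < N$, then it embeds one with $n \ge N \in A$. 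Since $A \mapsto L_A$ is monotone, turns long all-zero prefixes of $A$ into easy instances and long all-one prefixes into $\NP$-hard instances, and keeps the resulting set inside the disjunctive self-reducible sets, this is exactly the setting of Ladner's theorem — carried out in this tournament encoding by Bodirsky and Grohe — and produces a set $A \in \P$ for which $L_A$ is neither in $\P$ nor $\NP$-complete (equivalently, $\CSP(\bB_{\calT_A})$ is $\coNP$-intermediate). Fixing such an $A$, the set $L_A$ is disjunctive self-reducible, not in $\P$, and not $\NP$-complete, which is the claim.

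The one genuinely delicate ingredient is the Ladner-style ``hole-blowing'' diagonalization defining $A$: one interleaves, over stages, diagonalization against candidate polynomial-time deciders for $L_A$ with diagonalization against candidate polynomial-time many-one reductions from $\overline{\CSP(\bB_{\calT})}$ to $L_A$, using the two observations above to guarantee that every stage can be completed. This is standard, and if one prefers to cite~\cite{BodirskyGrohe} as a black box for the existence of the $\coNP$-intermediate $\CSP(\bB_{\calT_A})$, the only thing that must be added is the routine remark that $A \in \P$ forces $\calT_A \in \P$, so that Lemma~\ref{lem:uNP-Ffree} applies and $L_A = \overline{\CSP(\bB_{\calT_A})}$ is disjunctive self-reducible.
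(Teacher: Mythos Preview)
Your proof is correct and follows essentially the same approach as the paper: take a Bodirsky--Grohe $\coNP$-intermediate CSP of the form $\CSP(\bB_{\calT_0})$ for a polynomial-time recognisable subfamily $\calT_0$ of the Henson set, and apply Lemma~\ref{lem:uNP-Ffree} to conclude that its complement is disjunctive self-reducible and $\NP$-intermediate. The paper simply cites~\cite{BodirskyGrohe} as a black box for the existence of such a $\calT_0$ (phrased via a unary-time function $g$ with $\calT_0 = \{\bT_n : g(n) \text{ even}\}$), whereas you additionally unpack the Ladner diagonalization and record the two endpoint observations; this extra material is sound but not needed once the black box is granted.
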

\begin{proof}
    We use  that there exists a subset ${\mathcal T}_0$ of the Henson set ${\mathcal T}$
    such that there exists a linear-time Turing machine $G$ computing a function $g \colon {\mathbb N} \to {\mathbb N}$
    (where the input to $G$ is represented in unary) with 
    \begin{itemize}
        \item ${\mathcal T}_0 = \{\bT_n \mid g(n) \text{ is even} \}$, 
        \item $\CSP(\bB_{{\mathcal T}_0})$ is not in P  and not coNP-hard, unless P = coNP~\cite{BodirskyGrohe}. 
    \end{itemize}
Clearly, $\calT_0$ is in $\PO$: given an input digraph $D$ verify whether $g(|D|)$ is even
(if not, reject), and if this is the case, verify whether $D$ belongs to the Henson set $\calT$ --- which is even
in first-order logic~\cite{ManuelSantiagoCSL}. 
Since $\CSP(\bB_{\calT_0})$ is the class of $\calT_0$-free
loopless oriented graphs, it follows from Lemma~\ref{lem:uNP-Ffree}
that $\CSP(\bB_{\calT_0})$ is disjunctive self-reducible. 
Note that this implies the unconditional non-dichotomy result stated in the theorem:
if P = NP, then there is no dichotomy, and if P is different from NP,
then  there are NP-intermediate disjunctive self-reducible sets.
\end{proof}

\subsection{Search reduces to decision}
\label{sub:search}

For a polynomial-time computable $R \subseteq (\{0,1\}^*)^2$, the \emph{search problem for $R$}
is the computational problem to compute for a given $u \in \{0,1\}^*$  some $v \in \{0,1\}^*$ such
that $(u,v) \in R$;  if there is no such $v$, the output can be arbitrary (even undefined).
Clearly, the search problem for $R$ is at least as hard deciding the language
$$ L_R := \{u\in\{0,1\}^\ast \mid \text{there is } v\in \{0,1\}^\ast \text{ such that } (u,v)\in R\}.$$ 
For a given $R$, we say that \emph{search for $R$ reduces to decision} if 
the search problem for $R$ can be solved in polynomial time given an oracle for the decision of $L$.  
For a given language $L$ in NP, we say that \emph{search for $L$ reduces to decision} if there
exists a polynomial-time computable $R \subseteq (\{0,1\}^*)^2$ such that $L = L_R$
and search for $R$ reduces to decision. 

In particular, for every disjunctive self-reducible set $L$ search for $L$ reduces to decision.
Indeed, suppose that $M$ is a Turing machine witnessing that $L$ is disjunctive self-reducible and $<$ is the
corresponding OK poset. For each $u\in L$ consider witnesses of the form $v_1\dots v_k$ where $u = v_1$, 
$v_i < v_{i+1}$ for each $i\in [k-1]$, $v_k$ is accepted by $M$ without calling the oracle, and $v_{i+1}$
is one of the queries that $M$  computes on input $v_i$ before calling the oracle. Clearly, the set
$R$ of such tuples $(u,v_1\dots v_k)$ is decidable in polynomial-time, and search reduces to decision for
$R$.

\begin{lemma}\label{lem:std-preservation}
    Let $L$ and $L'$ be sets such that search reduces to decision for $L$. If there is a 
    polynomial-time disjunctive reduction from $L'$ to $L$, and a polynomial-time Turing reduction
    from $L$ to $L'$, then search reduces to decision for $L'$. 
\end{lemma}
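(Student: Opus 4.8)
The plan is to assemble a suitable polynomial-time relation $R'$ for $L'$ out of the given ingredients, and then to solve its search problem with an oracle for $L'$ by chaining the three available procedures.

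First I would fix the ingredients. Since search reduces to decision for $L$, there is a polynomial-time relation $R\subseteq(\{0,1\}^*)^2$ with $L=L_R$ together with a polynomial-time oracle machine $N$ which, on input $u\in L$ with access to a decision oracle for $L$, outputs some $v$ with $(u,v)\in R$. Let $g$ be the polynomial-time function witnessing the disjunctive reduction from $L'$ to $L$; on input $x$ it outputs a set $g(x)=\{y_1,\dots,y_m\}$ of words whose number and sizes are bounded by a polynomial in $|x|$, and $x\in L'$ if and only if $y_i\in L$ for some $i$. Let $M$ be the polynomial-time oracle machine witnessing the Turing reduction from $L$ to $L'$, so that $M^{L'}$ decides $L$.

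Next I would define
\[
 R' := \{\, (x,\langle i,v\rangle) \;\mid\; 1\le i\le |g(x)|,\ (y_i,v)\in R \,\},
\]
where $y_i$ denotes the $i$-th element of $g(x)$ in a fixed polynomial-time order and $\langle\cdot,\cdot\rangle$ is a polynomial-time pairing. Then $R'$ is polynomial-time decidable (compute $g(x)$, decode the pair, test membership in $R$), and $L_{R'}=\{x\mid \exists i\ y_i\in L_R\}=L'$ by correctness of the disjunctive reduction. It then remains to solve the search problem for $R'$ in polynomial time with an oracle for $L_{R'}=L'$. On input $x$: compute $g(x)=\{y_1,\dots,y_m\}$; for $i=1,\dots,m$ decide whether $y_i\in L$ by simulating $M$ on $y_i$ and resolving $M$'s queries with the $L'$-oracle; let $i^\ast$ be the least index with $y_{i^\ast}\in L$ (if there is none, output anything). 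Then simulate $N$ on input $y_{i^\ast}$, resolving each query $q$ of $N$ to its $L$-oracle by a further simulation of $M$ on $q$ with the $L'$-oracle; this yields some $v$ with $(y_{i^\ast},v)\in R$, and the procedure outputs $\langle i^\ast,v\rangle$. If $x\in L'$ then some $y_i$ lies in $L$, so $i^\ast$ and $v$ exist and $(x,\langle i^\ast,v\rangle)\in R'$, which is what search for $R'$ must produce.

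The only point that needs care — and the main, albeit mild, obstacle — is the running-time bookkeeping across the composition: I would check that nesting the simulation of $N$'s $L$-oracle inside $M^{L'}$ merely multiplies polynomially many polynomial bounds (on $m$, on the running times of $M$ and $N$, and on the sizes of all intermediate strings), so the whole procedure remains polynomial time and makes only polynomially many $L'$-oracle calls. With this observation in hand, the lemma follows by unwinding the definitions of disjunctive reduction, Turing reduction, and ``search reduces to decision''.
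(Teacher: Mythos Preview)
Your proposal is correct and follows essentially the same approach as the paper: define the witness relation for $L'$ as pairs consisting of an element of the disjunctive reduction together with an $R$-witness for that element, and solve the resulting search problem by replacing every $L$-oracle call in the search machine for $R$ by a simulation of the Turing reduction $M^{L'}$. The only cosmetic difference is that you first locate an index $i^\ast$ with $y_{i^\ast}\in L$ (via $M^{L'}$) and then run the search once, whereas the paper simply runs the search-with-$L'$-oracle machine on each $u\in f(x)$ and checks afterwards whether the output is a valid $R$-witness; both variants are clearly correct and polynomial.
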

\begin{proof}
    Let $R$ be a binary relation such that $L = L_R$ and search reduces to decision for $R$, and
    let $f$ be a polynomial-time computable function proving that there is a disjunctive polynomial-time
    reduction from  $L'$ to $L$, i.e., $x\in L'$ if and only if $f(x)\cap L\neq \varnothing$.
    Note that a witness for $x\in L'$ is a pair $(u,v)$ such that $u\in f(x)$ and $(u,v)\in R$.
    Since there is a polynomial-time Turing reduction from $L = L_R$ to $L'$, and search
    reduces to decision for $R$, there is a polynomial-time deterministic oracle machine $M$
    that solves the search problem for $R$ using $L'$ as an oracle. Let $M'$ be the deterministic
    polynomial-time oracle machine that on input $x$ does the following:
    \begin{itemize} 
    \item it first computes $f(x)$; 
    \item for each $u\in f(x)$
    it simulates $M$ to find a witness $v$ such that $(u,v)\in R$; 
    \item if it finds such a $v$ for some $u\in f(x)$,
    then $M'$ outputs $(u,v)$; 
    \item otherwise it rejects. 
    \end{itemize}
    It clearly follows that $M'$ is an oracle machine proving
    that search reduces to decision for $L'$.
\end{proof}

It is now straightforward to observe that search reduces to decision for every problem in any of the
classes considered so far.  

\begin{theorem}\label{thm:search-to-decision}
Search  reduces to decision for every problem $P$ in any of the following classes.
\begin{itemize}
    \item $\CSP$s of finitely bounded structures. 
    \item Monotone extensional $\SNP$.
    \item Extensional $\ESO$.
\end{itemize}
In particular, neither of these classes in disjunctive $\NP$-rich unless 
$\EE = \NEE$.
\end{theorem}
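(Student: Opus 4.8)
The plan is to obtain both halves of the statement by stitching together results already established. For the first half, observe first that monotone extensional $\SNP$ is, by definition, a syntactic fragment of extensional $\ESO$, so it suffices to treat extensional $\ESO$ and $\CSP$s of finitely bounded structures. For both of these, Lemma~\ref{lem:extESO->d-self-reducible} already provides that every such problem is disjunctive self-reducible (the ``in particular'' clause of that lemma is precisely what covers $\CSP$s of finitely bounded structures, whose templates are infinite and hence not literally given by an $\ESO$ sentence). It then remains to invoke the observation recorded just before Lemma~\ref{lem:std-preservation}: for every disjunctive self-reducible set $L$, search for $L$ reduces to decision --- one walks down the self-reduction tree, at each non-leaf node using the decision oracle to pick a child still in $L$, and outputs the certificate read off the path to the accepting leaf. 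Combining these gives search-to-decision for every problem in each of the three classes.

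For the ``in particular'' part, I would argue by a reduction to the Bellare--Goldwasser theorem~\cite{BellareGoldwasser}, which says that some language in $\NP$ has search not reducing to decision unless $\EE = \NEE$. Suppose one of the three classes, say $\mathcal L$, were disjunctive $\NP$-rich, and fix an arbitrary $L \in \NP$. Disjunctive $\NP$-richness supplies a problem $L_0 \in \mathcal L$ together with polynomial-time disjunctive reductions from $L$ to $L_0$ and from $L_0$ to $L$. By the first half, search reduces to decision for $L_0$. Apply Lemma~\ref{lem:std-preservation}: its hypotheses are search-to-decision for $L_0$ (we have it), a disjunctive reduction from $L$ to $L_0$ (we have it), and a Turing reduction from $L_0$ to $L$ (we have it, since a disjunctive reduction is in particular a Turing reduction); the conclusion is that search reduces to decision for $L$. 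As $L \in \NP$ was arbitrary, search would reduce to decision for every language in $\NP$, forcing $\EE = \NEE$.

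I do not expect a genuine obstacle here: the real content sits in Lemma~\ref{lem:extESO->d-self-reducible}, Lemma~\ref{lem:std-preservation}, and the cited separation. The one point to handle with care is the bookkeeping of reduction types --- namely, that it is \emph{disjunctive} $\NP$-richness (not plain $\NP$-richness) that is used, since Lemma~\ref{lem:std-preservation} requires a disjunctive reduction in the direction from the $\NP$-instance into the class, and that it suffices to have search-to-decision for the single class member $L_0$ in order to transfer the property to $L$. A secondary sanity check is that the $\EE$/$\NEE$ levels in the hypothesis of Lemma~\ref{lem:std-preservation}'s downstream application match those in the Bellare--Goldwasser statement, so that the final ``unless $\EE = \NEE$'' is exactly what comes out.
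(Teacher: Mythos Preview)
Your proposal is correct and follows the same route as the paper: disjunctive self-reducibility via Lemma~\ref{lem:extESO->d-self-reducible}, then the search-to-decision observation preceding Lemma~\ref{lem:std-preservation}, then Lemma~\ref{lem:std-preservation} combined with Bellare--Goldwasser. One quibble: your parenthetical that finitely bounded CSPs are ``not literally given by an ESO sentence'' is mistaken---Corollary~\ref{cor:FBstructures} shows they \emph{are} in monotone extensional $\SNP$, which is exactly how the ``in particular'' clause of Lemma~\ref{lem:extESO->d-self-reducible} is justified---but this does not affect the validity of your argument.
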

\begin{proof}
    We already argued that search reduces to decision for every disjunctive self-reducible
    sets, and it follows from Lemmas~\ref{lem:extESO->d-self-reducible} and~\ref{lem:uNP-Ffree}
    that any problem in these classes is disjunctive self-reducible. To see that  the last claim of
    this theorem holds,  we use Lemma~\ref{lem:std-preservation} to note that if $L$
    is polynomial-time disjunctive equivalent to $L'$ where $L'$ belongs to one of the itemized classes, 
    then search reduces to decision for $L$. The claim now follows, because
    Ballare and Goldwasser~\cite{BellareGoldwasser} proved that there exists a language $L \subseteq \{0,1\}^*$
    in NP such that search does not reduce to decision for $L$ unless $\EE=\NEE$.
\end{proof}

\subsection{Promise problems}
\label{sub:promise}
Several $\NP$-complete problems become tractable given a strong enough promise.
For instance, $k$-colourability is decidable in polynomial time, given the
promise that the input graph $G$ is \emph{perfect}, i.e., the chromatic number
of $H$ equals the largest complete subgraph of $H$ for every induced subgraph $H$ of
$G$. Hence, to decide whether $G$ is $k$-colourable it suffices to verify whether $G$ is $K_{k+1}$-free,
which is clearly in P. In this example, the promise is even verifiable in polynomial-time~\cite{chusdnovskyAM164},
but this need not be the case in general. A prominent example being the problem 1-in-3 SAT, which 
is tractable given the promise that the input
formula $\phi$ either has a 1-in-3 solution, or does not even have a not-all-equal
solution~\cite{BG21}. In contrast, some problems remain hard even given some non-trivial promise.
A typical example is $3$-colourability which is $\NP$-hard given the promise that the
input graph $G$ is either $3$-colourable or not even $5$-colourable~\cite{bartoJACM68}. 

A general framework for studying hard promises was introduced in~\cite{selmanIC78}.
Formally, a (decidable) \emph{promise problem} is a pair of decidable sets $(Q,R)$ where 
$Q$ is the \emph{promise}, and $R$ is the \emph{property} that needs to be decided
given the promise $Q$. A deterministic Turing machine $M$ \emph{solves} the promise
problem $(Q,R)$ if for every input $x$ the following holds
\[
\text{if } x\in Q, \text{ then } M \text{ accepts } x \text{ if and only if } x\in R.
\]
In this case, we say that the language $L$ accepted by $M$ is a \emph{solution} to $(Q,R)$. 
In particular, notice that $R$ and $Q \cap R$ are solutions to $(Q,R)$. Going back
to our previous examples, if $Q$ is the class of perfect graphs, and $R$ the class
of $k$-colourable graphs, then the class of $K_{k+1}$-free graphs is a polynomial-time
solution to $(Q,R)$.

A promise problem $(Q,R)$ is in $\PO$ if $(Q,R)$ has a solution  $L$ in $\PO$, and
$(Q,R)$ is \emph{NP-hard} if every solution to $(Q,R)$ is NP-hard. Note that for every promise
problem $(Q,R)$, we have that $R$ is a solution to $(Q,R)$, and hence if $R$ is in $\PO$, then $(Q,R)$ is in $\PO$.
A promise problem $(Q,R)$ is \emph{polynomial-time Turing-reducible%
\footnote{Many-one, disjunctive, and conjunctive reductions are defined analogously,
but are not needed for this paper.}} to a promise problem $(S,T)$
if for every solution $A$ of $(S,T)$ there is a solution $B$ of $(Q,T)$ such that $B$
is polynomial-time Turing-reducible to $A$. Notice that if $L$ is a decidable set,
then the only solution to $(\{0,1\}^\ast, L)$ is $L$, and hence, we say that a set
$L$ is polynomial-time Turing-reducible to a promise problem $(Q,R)$ if $(\{0,1\}^\ast, L)$
is polynomial-time Turing-reducible to $(Q,R)$. 

\emph{Natural promise problems} trace back to at least 1984, and they arise in the context
of public-key cryptography~\cite{evenIC61}. Given a decidable set $L \subseteq \Sigma^*$,
for some finite alphabet $\Sigma$, the \emph{natural promise problem} for $L$ is the promise
problem 
\[\left ( L\times (\Sigma^* \setminus L)~\uplus~(\Sigma^* \setminus L) \times L,~L\times \Sigma^\ast \right).\]
That is, an instance to the problem is a pair $(x,y)$, the promise is that exactly one of $x$ or $y$ belongs to $L$,
and the property to decide is whether $x\in L$.
Clearly, the natural promise problem for $L$ is at most as hard as $L$, and thus, if $L$ is in $\PO$
then $L$ and the natural promise problem for $L$ are trivially polynomial-time equivalent.
In~\cite{evenIC61}, the authors  prove that the natural promise problem for SAT is $\NP$-hard, 
and it follows from~\cite{selmanIC78} that $L$ and its natural promise problem are polynomial-time
Turing-equivalent for every $\NP$-complete set $L$. In particular, it follows from the finite domain dichotomy
conjecture~\cite{BulatovFVConjecture,ZhukFVConjecture} that every finite domain CSP is polynomial-time
Turing-equivalent to its natural promise problem -- this will also follow from our next theorem without
using the finite domain dichotomy. The graph isomorphism problem (which
is not known to belong to P and most likely not NP-complete) is polynomial-time equivalent to its natural promise problem~\cite{selmanIC78}.

\begin{theorem}\label{thm:promises}
    For every $L$ in any of the following classes, the natural promise problem
    for $L$ is polynomial-time Turing equivalent to $L$.
    \begin{itemize}
        \item $\CSP$s of finitely bounded structures. 
        \item Monotone extensional $\SNP$.
        \item Extensional $\ESO$.
    \end{itemize}
\end{theorem}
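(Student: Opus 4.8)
The plan is to prove both Turing reductions between $L$ and its natural promise problem $(Q,R)$, where $Q:=L\times(\Sigma^*\setminus L)\uplus(\Sigma^*\setminus L)\times L$ and $R:=L\times\Sigma^*$. One direction --- that $(Q,R)$ is at most as hard as $L$ --- is immediate and already noted above: $R$ is itself a solution of $(Q,R)$ and $R$ many-one reduces to $L$ via $(x,y)\mapsto x$. For the converse one must show $L\le^p_T A$ for \emph{every} solution $A$ of $(Q,R)$ (this is exactly what it means for $L$ to be Turing-reducible to the promise problem). By Lemma~\ref{lem:extESO->d-self-reducible} (and Lemma~\ref{lem:uNP-Ffree}), every $L$ in the three listed classes is disjunctive self-reducible, so it suffices to prove the general statement: every disjunctive self-reducible set $L$ is polynomial-time Turing-reducible to every solution of its natural promise problem.

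The key observation is that any solution $A$ of $(Q,R)$ yields a \emph{selector} for $L$ computable in polynomial time relative to $A$: put $s(u,w):=u$ if $(u,w)\in A$, and $s(u,w):=w$ otherwise. Then $s(u,w)\in\{u,w\}$ always, and $s(u,w)\in L$ whenever $u\in L$ or $w\in L$. Indeed, if exactly one of $u,w$ lies in $L$ then $(u,w)\in Q$, so $(u,w)\in A$ iff $u\in L$, and $s$ returns the member; if both lie in $L$, either choice is in $L$; and if neither does, there is nothing to prove. The point is that this holds no matter how $A$ answers on pairs outside $Q$, that is, on pairs both of whose coordinates lie in $L$ or neither of whose coordinates does.

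I would then run the standard depth-bounded walk, folding children with the selector. Fix an OK order $<$ and a machine $M$ witnessing disjunctive self-reducibility of $L$, and on input $x$ keep a current instance $c$, initialised to $x$, with the invariant ``$c\in L$ iff $x\in L$''. At each round, carry out the self-reduction step on $c$: if $M$ decides membership of $c$ without an oracle call, output that answer (correct by the invariant); otherwise $M$ produces children $y_1,\dots,y_n<c$ with $c\in L$ iff some $y_j\in L$. Fold them, $z_1:=y_1$ and $z_{j+1}:=s(z_j,y_{j+1})$, and replace $c$ by $z_n$. A short induction on $j$ using the selector property shows $z_n\in\{y_1,\dots,y_n\}$ and that $z_n\in L$ as soon as some $y_j\in L$; since $z_n$ is a child of the old value of $c$, this gives ``$z_n\in L$ iff (old $c$) $\in L$ iff $x\in L$'', so the invariant is maintained. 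Each successive value of $c$ is strictly below the previous one in $<$, so by the OK property the walk halts after polynomially many rounds, each running $M$ once and making polynomially many queries to $A$; hence the whole procedure decides $L$ in polynomial time with oracle $A$.

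The only non-routine point --- the main obstacle --- is the selector observation together with the verification that the invariant survives the fold: a priori $A$ may lie on exactly the pairs that occur during folding (both children in $L$, or neither), and one must check that this never spoils ``$c\in L$ iff $x\in L$'', which is what the case analysis above secures. Everything else is supplied by results already in hand: disjunctive self-reducibility of the three classes from Lemma~\ref{lem:extESO->d-self-reducible} and Lemma~\ref{lem:uNP-Ffree}, and termination in polynomially many rounds from the definition of an OK order.
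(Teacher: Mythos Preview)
Your proof is correct and follows the same high-level route as the paper: one direction is immediate, and for the other you reduce to the fact that every $L$ in the three listed classes is disjunctive self-reducible (via Lemma~\ref{lem:extESO->d-self-reducible}; Lemma~\ref{lem:uNP-Ffree} is not actually needed here, though the paper also cites it). The difference is that the paper simply invokes Theorem~3 of~\cite{selmanIC78} for the statement ``disjunctive self-reducible implies Turing-equivalent to its natural promise problem'', whereas you supply a self-contained proof of that theorem via the selector construction $s(u,w)$ and the fold-along-the-self-reduction argument. Your argument is exactly the standard proof of Selman's result, so the two approaches are not genuinely different; yours is just the unpacked version, which buys self-containment at the cost of some length.
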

\begin{proof}
    Clearly, every set $L$ is at least as hard as its natural promise problem. 
    On the other hand, Theorem 3 in~\cite{selmanIC78} asserts that if $L$ is disjunctive self-reducible,
    then there is a polynomial-time Turing-reduction from $L$ to the natural promise problem $L$.
    By Lemmas~\ref{lem:extESO->d-self-reducible} and~\ref{lem:uNP-Ffree} we know that
    any problem in $L$ in any of the itemized classes is disjunctive self-reducible, and so,
    the natural promise problem for $L$ is polynomial-time Turing equivalent to $L$.
\end{proof}

\subsection{Failure of NP-richness}

It follows from results in~\cite{selmanIC78} that there are problems in $\NP$ which are
not polynomial-time disjunctive-equivalent to any disjunctive self-reducible set, unless E = NE.%

\begin{theorem}
[essentially in~\cite{selmanIC78}]
\label{thm:d-self-reducible-not-NP-rich}
    The class of disjunctive self-reducible sets is not disjunctive $\NP$-rich, unless $\E = \NE$. 
\end{theorem}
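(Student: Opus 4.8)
The plan is to recover this statement, which is essentially Selman's~\cite{selmanIC78}. It suffices to exhibit a single set $L\in\NP$ that, under the hypothesis $\E\neq\NE$, is not polynomial-time disjunctive-equivalent to any disjunctive self-reducible set; this contradicts the class being disjunctive $\NP$-rich.

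For the witness set I would invoke the standard padding characterisation $\E=\NE$ if and only if $\NP\cap\mathrm{TALLY}\subseteq\P$ (a double translation between unary and binary encodings: a tally $T\in\NTIME(n^k)$ re-encodes, via $0^n\mapsto\mathrm{bin}(n)$, to a set in $\NE$, which under $\E=\NE$ collapses and yields $T\in\P$; conversely $L\in\NTIME(2^{kn})$ pads, via $x\mapsto 0^{\mathrm{num}(x)}$, to a tally $\NP$ set, which placed in $\P$ gives $L\in\E$). So, assuming $\E\neq\NE$, fix a tally set $T\subseteq\{0\}^\ast$ with $T\in\NP\setminus\P$. Now suppose toward a contradiction that $T$ is disjunctive-equivalent to a disjunctive self-reducible set $A$ (so $A\in\NP$). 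I would exploit the structural rigidity of disjunctive self-reducible sets recorded in Sections~\ref{sub:search}--\ref{sub:promise}: $A$ admits a polynomial-depth self-reduction over an $\mathrm{OK}$ order, and (by Theorem~3 of~\cite{selmanIC78}, as used in the proof of Theorem~\ref{thm:promises}) $A$ is polynomial-time Turing-equivalent to its natural promise problem. Transferring this rigidity to $T$ along the disjunctive equivalence in the style of Lemma~\ref{lem:std-preservation}, and using that the disjunctive reduction $A\le^p_d T$ may be assumed to query only words of $\{0\}^\ast$ of polynomially bounded length, one sees that only polynomially many (and polynomially long) tally queries are ever relevant to decide $A$, hence $T$, on inputs up to a fixed length; Selman's analysis then pins these bits of $T$ down via the reduction $T\le^p_d A$ and concludes $T\in\P$, the desired contradiction.

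The hard part will be this closing step, where the recursion must be tamed: deciding $A$ up to length $N$ a priori needs the bits $[0^m\in T]$ for $m$ polynomial in $N$, evaluating those via $T\le^p_d A$ asks about $A$ on longer inputs, and the length bounds compose rather than close. Selman's resolution leans on $T$ being tally — so that only boundedly many, polynomially-bounded queries arise over the entire self-reduction tree — together with the promise-problem equivalence; this is the technical core I would reproduce from~\cite{selmanIC78}. A secondary point to watch is the reduction type: Selman's theorem is cleanest for many-one equivalence, where the natural promise problem is manifestly functorial, so strengthening the conclusion to disjunctive equivalence (as ``disjunctive $\NP$-rich'' demands) requires exactly the transfer argument above, carried out with care, with the tallyness of $T$ keeping the query structure under control.
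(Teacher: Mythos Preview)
Your proposal identifies the right ingredients from~\cite{selmanIC78} but assembles them in a more convoluted way than the paper does, and the ``hard part'' you flag is in fact entirely avoidable.

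The paper's proof is a direct three-citation argument. First, Selman's Theorem~3 (which you also invoke) says that every disjunctive self-reducible set $A$ is polynomial-time Turing-equivalent to its own natural promise problem. Second --- and this is the step you are missing in clean form --- Selman's Theorem~4 asserts that the property ``$L$ is Turing-equivalent to its natural promise problem'' is preserved under polynomial-time \emph{disjunctive} equivalence. So if $L$ is disjunctive-equivalent to any disjunctive self-reducible $A$, then $L$ itself enjoys this property. Third, Selman's Theorem~7 guarantees (assuming $\E\neq\NE$) a set $L\in\NP\setminus\P$ whose natural promise problem \emph{is} in $\P$, hence not Turing-equivalent to $L$. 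That $L$ is the witness, and the proof is complete.

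By contrast, you pick a tally $T\in\NP\setminus\P$ (which is how Selman's Theorem~7 is proved, so you are re-deriving rather than citing it), gesture at a transfer ``in the style of Lemma~\ref{lem:std-preservation}'' (but that lemma is about search-to-decision, not natural promise problems; the right tool is Selman's Theorem~4), and then embark on a direct query-counting argument to force $T\in\P$. The recursion-taming difficulty you anticipate is real for that route, but it simply does not arise once you use the natural promise problem as the invariant: the contradiction is immediate from the failure of Turing-equivalence, with no need to bound tally queries or unwind the self-reduction tree.
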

\begin{proof}
    Theorem 3 in~\cite{selmanIC78} asserts that if $L$ is disjunctive self-reducible, then
    the natural promise problem for $L$ is polynomial-time Turing-equivalent to $L$, and Theorem 4
    in~\cite{selmanIC78} asserts that the property ``$L$ and the natural promise problem
    for $L$ are polynomial-time Turing equivalent'' is preserved under polynomial time
    disjunctive-equivalence. Finally, Theorem 7 in~\cite{selmanIC78} guarantees that if $\E \neq \NE$,
    then there is a set $L\in \NP\setminus \PO$ such that the natural promise problem for $L$ is in $\PO$,
    i.e., $L$ and its natural promise problem are not polynomial-time Turing-equivalent.
\end{proof}

\begin{corollary}\label{cor:notNPrich}
    None of the following classes is $\NP$-rich, 
    unless $\E = \NE$. 
    \begin{itemize}
        \item $\CSP$s of finitely bounded structures. 
        \item Monotone extensional $\SNP$.
        \item Extensional $\ESO$.
    \end{itemize}
\end{corollary}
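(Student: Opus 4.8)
The plan is to exhibit each of the three classes as a subclass of the disjunctive self-reducible sets and then invoke Theorem~\ref{thm:d-self-reducible-not-NP-rich}. First I would note that monotone extensional $\SNP$ is, by definition, a fragment of extensional $\ESO$ (its first-order part is universal), so Lemma~\ref{lem:extESO->d-self-reducible} already shows that every problem expressible in monotone extensional $\SNP$, and more generally in extensional $\ESO$, is disjunctive self-reducible. For $\CSP$s of finitely bounded structures, Corollary~\ref{cor:FBstructures} places them inside monotone extensional $\SNP$ (this is also recorded directly in the ``in particular'' clause of Lemma~\ref{lem:extESO->d-self-reducible}), so they too are disjunctive self-reducible. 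Hence each of the three itemized classes is contained in the class of disjunctive self-reducible sets.

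Next I would argue that being a subclass of a class that fails to be disjunctive $\NP$-rich forces failure of $\NP$-richness. Concretely, suppose one of the three classes, say $\mathcal{L}$, were $\NP$-rich: then every $L \in \NP$ would be polynomial-time (many-one) equivalent to some $L' \in \mathcal{L}$. Since a many-one polynomial-time reduction is a special case of a disjunctive polynomial-time reduction (a single query), $L$ would be polynomial-time disjunctive-equivalent to $L'$, and $L'$ is disjunctive self-reducible by the previous paragraph. Thus every $L \in \NP$ would be polynomial-time disjunctive-equivalent to a disjunctive self-reducible set, i.e.\ the class of disjunctive self-reducible sets would be disjunctive $\NP$-rich. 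By Theorem~\ref{thm:d-self-reducible-not-NP-rich} this forces $\E = \NE$, which proves the corollary.

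There is essentially no hard step here: the substance is entirely carried by Theorem~\ref{thm:d-self-reducible-not-NP-rich} (which packages the results of~\cite{selmanIC78}) together with Lemma~\ref{lem:extESO->d-self-reducible}. The one point that requires care is the bookkeeping of reduction types: one must use that many-one reductions refine to disjunctive reductions, so that $\NP$-richness --- stated with polynomial-time many-one equivalence --- indeed implies disjunctive $\NP$-richness, and hence the contrapositive of Theorem~\ref{thm:d-self-reducible-not-NP-rich} applies to each of the three classes at once. A secondary, cosmetic point is to confirm that the inclusions among the three classes (via Corollary~\ref{cor:FBstructures} and the fragment relation) are consistent with the conventions fixed earlier, so that the single appeal to Lemma~\ref{lem:extESO->d-self-reducible} covers all three bullets simultaneously.
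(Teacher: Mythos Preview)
Your proposal is correct and matches the paper's intended argument: the corollary is stated without proof precisely because it is meant to follow immediately from Lemma~\ref{lem:extESO->d-self-reducible} (placing all three classes inside the disjunctive self-reducible sets) together with Theorem~\ref{thm:d-self-reducible-not-NP-rich}, plus the observation that many-one equivalence implies disjunctive equivalence. You have spelled out exactly this chain, including the one bookkeeping point about reduction types that the paper leaves implicit.
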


\section{Conclusion and Open Problems}
\label{sect:open}

We  proved that the complement of every problem in extensional ESO is polynomial-time equivalent
to a problem in $\HerFO$, and vice versa (Theorem~\ref{thm:meESO-extESO-coHerFO}). We also 
showed that there are problems in $\coNP$ that are not polynomial-time equivalent to problems in
Extensional ESO, unless NE=E (Corollary~\ref{cor:notNPrich}). 
Our results also show that  there are subclasses of NP that are believed to have
no P versus NP-complete dichotomy, while the previous technique of showing this
by establishing that the class is NP-rich does not work. For instance for
proving that CSPs of finitely bounded structures do not have a complexity dichotomy,
we therefore need a different approach.

Besides Conjecture~\ref{conj:intermediate}, 
a number of open problems are left for future research. 
\begin{enumerate}
    \item Is every disjunctive self-reducible set polynomial-time (disjunctive-) equivalent to an  extensional ESO sentence (see Figure~\ref{fig:landscape})?
    \item Is it true that for every extensional ESO sentence $\Phi$ there is a finitely bounded structure $\bA$ such that $\Phi$ and $\CSP(\bA)$ are
    polynomial-time equivalent (see again Figure~\ref{fig:landscape})? 
    \item Are there $\NP$-intermediate CSPs of finitely bounded structures (assuming $\PO\neq \NP$)?
    \item Is there a finitely bounded structure $\bA$ such that $\CSP(\bA)$ is polynomial-time equivalent 
    to the graph isomorphism problem? 
    \item Is every CSP of a finitely bounded structure in connected monotone extensional SNP? (Compare with
    Corollary~\ref{cor:SUCSNP} and the moreover statement in Theorem~\ref{thm:muSNP-fbCSP-sucSNPwe}).
\end{enumerate}

\section*{Acknowledgments}
The authors are thankful to Antoine Amarilli for a guided tour through
Constraint Topological Sorting.

%%%%% Bibliography

\bibliographystyle{abbrv}
\bibliography{global.bib}

\begin{thebibliography}{10}

\bibitem{alvaradoAOR280}
J.~Alvarado, S.~Dantas, and D.~Rautenbach.
\newblock Sandwiches missing two ingredients of order four.
\newblock {\em Annals of Operations Research}, 280:47--63, 2019.

\bibitem{amarilliICALP2018}
A.~Amarilli and C.~Paperman.
\newblock {Topological Sorting with Regular Constraints}.
\newblock In I.~Chatzigiannakis, C.~Kaklamanis, D.~Marx, and D.~Sannella, editors, {\em 45th International Colloquium on Automata, Languages, and Programming (ICALP 2018)}, volume 107 of {\em Leibniz International Proceedings in Informatics (LIPIcs)}, pages 115:1--115:14, Dagstuhl, Germany, 2018. Schloss Dagstuhl -- Leibniz-Zentrum f{\"u}r Informatik.

\bibitem{Ambos-SpiesK88}
K.~Ambos{-}Spies and J.~K{\"{a}}mper.
\newblock On disjunctive self-reducibility.
\newblock In E.~B{\"{o}}rger, H.~K. B{\"{u}}ning, and M.~M. Richter, editors, {\em {CSL} '88, 2nd Workshop on Computer Science Logic, Duisburg, Germany, October 3-7, 1988, Proceedings}, volume 385 of {\em Lecture Notes in Computer Science}, pages 1--13. Springer, 1988.

\bibitem{babaiGI}
L.~Babai.
\newblock Graph isomorphism in quasipolynomial time.
\newblock {\em arXiv:1512.03547}, 2015.

\bibitem{ballEJC31}
R.~N. Ball, J.~Ne\v{s}et\v{r}il, and A.~Pultr.
\newblock Dualities in full homomorphisms.
\newblock {\em European Journal of Combinatorics}, 31:106--199, 2010.

\bibitem{bangjensenJGT87}
J.~Bang-Jensen, J.~Huang, and X.~Zhu.
\newblock Completing orientations of partially oriented graphs.
\newblock {\em Journal of Graph Theory}, 87(3):285--304, 2017.

\bibitem{banachMFCS24}
M.~Bannach, F.~Chudigiewitsch, and T.~Tantau.
\newblock {On the Descriptive Complexity of Vertex Deletion Problems}.
\newblock In R.~Kr\'{a}lovi\v{c} and A.~Ku\v{c}era, editors, {\em 49th International Symposium on Mathematical Foundations of Computer Science (MFCS 2024)}, volume 306 of {\em Leibniz International Proceedings in Informatics (LIPIcs)}, pages 17:1--17:14, Dagstuhl, Germany, 2024. Schloss Dagstuhl -- Leibniz-Zentrum f{\"u}r Informatik.

\bibitem{BarsukovM23}
A.~Barsukov and F.~R. Madelaine.
\newblock On guarded extensions of {MMSNP}.
\newblock In G.~D. Vedova, B.~Dundua, S.~Lempp, and F.~Manea, editors, {\em Unity of Logic and Computation - 19th Conference on Computability in Europe, CiE 2023, Batumi, Georgia, July 24-28, 2023, Proceedings}, volume 13967 of {\em Lecture Notes in Computer Science}, pages 202--213. Springer, 2023.

\bibitem{bartoJACM68}
L.~Barto, J.~Bul\'{\i}n, A.~Krokhin, and J.~Opr\v{s}al.
\newblock Algebraic approach to promise constraint satisfaction.
\newblock {\em J. ACM}, 68(4), July 2021.

\bibitem{BellareGoldwasser}
M.~Bellare and S.~Goldwasser.
\newblock The complexity of decision versus search.
\newblock {\em {SIAM} J. Comput.}, 23(1):97--119, 1994.

\bibitem{bienvenu2014}
M.~Bienvenu, B.~ten Cate, C.~Lutz, and F.~Wolter.
\newblock {Ontology-Based Data Access: A Study through Disjunctive Datalog, CSP, and MMSNP}.
\newblock {\em {ACM} Trans. Database Syst.}, 39(4):33:1--33:44, 2014.

\bibitem{Book}
M.~Bodirsky.
\newblock {\em Complexity of Infinite-Domain Constraint Satisfaction}.
\newblock Lecture Notes in Logic (52). Cambridge University Press, Cambridge, United Kingdom; New York, NY, 2021.

\bibitem{BodirskyGrohe}
M.~Bodirsky and M.~Grohe.
\newblock Non-dichotomies in constraint satisfaction complexity.
\newblock pages 184 --196. Springer Verlag, July 2008.

\bibitem{ManuelSantiagoCSL}
M.~Bodirsky and S.~Guzm\'an-Pro.
\newblock Hereditary first-order logic: the tractable quantifier prefixes, 2025.
\newblock Preprint arXiv:2411.10860v2.

\bibitem{bodirskyJGT109}
M.~Bodirsky and S.~Guzmán-Pro.
\newblock The generic circular triangle-free graph.
\newblock {\em Journal of Graph Theory}, 109(4):426--445, 2025.

\bibitem{bodirsky_asnp}
M.~Bodirsky, S.~Kn{\"{a}}uer, and F.~Starke.
\newblock {ASNP:} {A} tame fragment of existential second-order logic.
\newblock In {\em CiE 2020, Proceedings}, volume 12098 of {\em Lecture Notes in Computer Science}, pages 149--162. Springer, 2020.

\bibitem{bondy2008}
J.~A. Bondy and U.~S.~R. Murty.
\newblock {\em Graph Theory}.
\newblock Springer, Berlin, 2008.

\bibitem{DecisionProblem}
E.~B{\"{o}}rger, E.~Gr{\"{a}}del, and Y.~Gurevich.
\newblock {\em The Classical Decision Problem}.
\newblock Perspectives in Mathematical Logic. Springer, 1997.

\bibitem{BG21}
J.~Brakensiek and V.~Guruswami.
\newblock Promise constraint satisfaction: Algebraic structure and a symmetric boolean dichotomy.
\newblock {\em {SIAM} J. Comput.}, 50(6):1663--1700, 2021.

\bibitem{BulatovFVConjecture}
A.~A. Bulatov.
\newblock A dichotomy theorem for nonuniform {CSP}s.
\newblock In {\em 58th {IEEE} Annual Symposium on Foundations of Computer Science, {FOCS} 2017, {B}erkeley, {CA}, {USA}, {O}ctober 15-17}, pages 319--330, 2017.

\bibitem{chusdnovskyAM164}
M.~Chudnovsky, N.~Roberston, P.~Seymour, and R.~Thomas.
\newblock The strong perfect graph theorem.
\newblock {\em Annals of Mathematics}, 164:51--229, 2006.

\bibitem{dantasDAM159}
S.~Dantas, C.~M. de~Figueiredo, M.~V. da~Silva, and R.~B. Teixeira.
\newblock On the forbidden induced subgraph sandwich problem.
\newblock {\em Discrete Applied Mathematics}, 159:1717--1725, 2015.

\bibitem{dantasENTCS346}
S.~Dantas, C.~M. de~Figueiredo, P.~Petito, and R.~B. Teixeira.
\newblock A general method for forbidden induced subgraph sandwich problem np-completeness.
\newblock {\em Electronic Notes in Theoretical Computer Science}, 346:393--400, 2019.

\bibitem{figueiredoDAM251}
C.~M. de~Figueiredo and S.~Spirkl.
\newblock Sandwich and probe problems for excluding paths.
\newblock {\em Discrete Applied Mathematics}, 251:146--154, 2018.

\bibitem{eiterSICOMP32}
T.~Eiter, G.~Gottlob, and K.~Makino.
\newblock New results on monotone dualization and generating hypergraph transversals.
\newblock {\em SIAM Journal on Computing}, 32(2):514--537, 2003.

\bibitem{eiterDAM156}
T.~Eiter, K.~Makino, and G.~Gottlob.
\newblock Computational aspects of monotone dualization: A brief survey.
\newblock {\em Discrete Applied Mathematics}, 156(11):2035--2049, 2008.
\newblock In Memory of Leonid Khachiyan (1952 - 2005 ).

\bibitem{evenIC61}
S.~Even, A.~L. Selman, and Y.~Yacobi.
\newblock The complexity of promise problems with applications to public-key cryptography.
\newblock {\em Information and Control}, 61(2):159--173, 1984.

\bibitem{Fagin}
R.~Fagin.
\newblock Generalized first-order spectra and polynomial-time recognizable sets.
\newblock {\em Complexity of Computation}, 7:43--73, 1974.

\bibitem{FederVardi}
T.~Feder and M.~Y. Vardi.
\newblock The computational structure of monotone monadic {SNP} and constraint satisfaction: {a} study through {D}atalog and group theory.
\newblock {\em {SIAM} Journal on Computing}, 28(1):57--104, 1999.

\bibitem{fominTCS64}
F.~Fomin, P.~Golovach, and D.~Thilikos.
\newblock On the parameterized complexity of graph modification to first-order logic properties.
\newblock {\em Theory of Computing Systems}, 64:251--271, 2020.

\bibitem{fredmanJA21}
M.~L. Fredman and L.~Khachiyan.
\newblock On the complexity of dualization of monotone disjunctive normal forms.
\newblock {\em Journal of Algorithms}, 21(3):618--628, 1996.

\bibitem{golumbicJA19}
M.~Golumbic, H.~Kaplan, and R.~Shamir.
\newblock Graph {S}andwich {P}roblems.
\newblock {\em Journal of Algorithms}, 19:449--473, 1995.

\bibitem{groheComACM63}
M.~Grohe and P.~Schweitzer.
\newblock The graph isomorphism problem.
\newblock {\em Commun. ACM}, 63(11):128–134, Oct. 2020.

\bibitem{hemaspaandraJCSS5}
E.~Hemaspaandra, A.~V. Naik, M.~Ogihara, and A.~L. Selman.
\newblock P-selective sets and reducing search to decision vs self-reducibility.
\newblock {\em Journal of Computer and System Sciences}, 5:194--209, 1996.

\bibitem{Hemaspaandra2020}
L.~A. Hemaspaandra.
\newblock {\em The Power of Self-Reducibility: Selectivity, Information, and Approximation}, pages 19--47.
\newblock Springer International Publishing, Cham, 2020.

\bibitem{Hodges}
W.~Hodges.
\newblock {\em A shorter model theory}.
\newblock Cambridge University Press, Cambridge, 1997.

\bibitem{Immerman}
N.~Immerman.
\newblock {\em Descriptive Complexity}.
\newblock Graduate Texts in Computer Science, Springer, New York, 1998.

\bibitem{koJCSS26}
K.-I. Ko.
\newblock On self-reducibility and weak p-selectivity.
\newblock {\em Journal of Computer and System Sciences}, 26:209--221, 1983.

\bibitem{KunNesetril24}
G.~Kun and J.~Nešetřil.
\newblock Dichotomy for orderings?, 2025.

\bibitem{LachlanWoodrow}
A.~H. Lachlan and R.~E. Woodrow.
\newblock Countable ultrahomogeneous undirected graphs.
\newblock {\em Transactions of the AMS}, 262(1):51--94, 1980.

\bibitem{ladnerACM22}
R.~E. Ladner.
\newblock On the structure of polynomial time reducibility.
\newblock {\em J. ACM}, 22(1):155–171, Jan. 1975.

\bibitem{macphersonDM311}
D.~Macpherson.
\newblock A survey of homogeneous structures.
\newblock {\em Discrete Mathematics}, 311(15):1599--1634, 2011.
\newblock Infinite Graphs: Introductions, Connections, Surveys.

\bibitem{maghout1959}
K.~Maghout.
\newblock Sur la determination des nombres de stabilite et du nombre chromatique dun graphe.
\newblock {\em COMPTES RENDUS HEBDOMADAIRES DES SEANCES DE L ACADEMIE DES SCIENCES}, 248(25):3522--3523, 1959.

\bibitem{meyerMIT79}
A.~Meyer and M.~Patersons.
\newblock {With what frequency are apparently intractable problems difficult?}, 1979.
\newblock {Tech. Report MIT/LCS/TM-126, Massachusetts Institute of Technology, Cambridge, MA}.

\bibitem{ICALP-1976-Schnorr}
C.-P. Schnorr.
\newblock {Optimal Algorithms for Self-Reducible Problems}.
\newblock In {\em {Proceedings of the Third International Colloquium on Automata, Languages and Programming}}, pages 322--337, 1976.

\bibitem{selmanIC78}
A.~L. Selman.
\newblock Promise problems for complexity classes.
\newblock {\em Information and Computation}, 78:87--98, 1988.

\bibitem{straubingSIGLOG5}
H.~Straubing.
\newblock First-order logic and aperiodic languages: a revisionist history.
\newblock {\em ACM SIGLOG News}, 5(3):4–20, July 2018.

\bibitem{ZhukFVConjecture}
D.~N. Zhuk.
\newblock A proof of {CSP} dichotomy conjecture.
\newblock In {\em 58th {IEEE} Annual Symposium on Foundations of Computer Science, {FOCS} 2017, {B}erkeley, {CA}, {USA}, {O}ctober 15-17}, pages 331--342, 2017.
\newblock https://arxiv.org/abs/1704.01914.

\end{thebibliography}

\newpage
\appendix

\section{A remark concerning the definition of extensional ESO}

We point out that if we remove the condition ``and $R'$ does not appear
anywhere else in $\Psi$'' in the definition of extensional $\ESO$ (in Section~\ref{sec:extensional-ESO}),
we obtain essentially the same logic as ESO.

\begin{observation}\label{obs:meaningless}
    For every $\ESO$ $\tau$-sentence $\Phi$, there are a signature $\tau^\ast\supseteq \tau$ and
    an $\ESO$ $\tau^\ast$-sentence $\Psi$ such that
    \begin{itemize}
        \item a $\tau^\ast$-structure $\bA$ satisfies $\Psi$ if and only if $\bA$ is a $\tau$-structure
        and $\bA\models \Phi$,
        \item if $\Phi$ is monotone (resp.\ in $\SNP$), then $\Psi$ is monotone (resp.\ in $\SNP$), and
        \item for every existentially quantified symbol $R$ of $\Psi$, there
        is a distinct symbol  $R'\in \tau^\ast$ such that $\Psi$ contains the clause
        $\forall \overline{x}.R'(\overline{x})\Rightarrow R(\overline{x})$.
    \end{itemize}
\end{observation}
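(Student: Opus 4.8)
The plan is to let $\Psi$ be $\Phi$ itself, together with, for each existentially quantified predicate $R_i$ of $\Phi$, a brand-new input symbol $R_i'$ and two trivializing conjuncts: the required implication $\forall\overline x.(R_i'(\overline x)\Rightarrow R_i(\overline x))$ and, crucially, a clause $\forall\overline x.\,\lnot R_i'(\overline x)$ forcing $R_i'$ to be empty. Concretely, write $\Phi = \exists R_1,\dots,R_k.\,\phi$ with $\phi$ a first-order $(\tau\cup\{R_1,\dots,R_k\})$-sentence and the $R_i$ not already in $\tau$; set $\tau^\ast := \tau\cup\{R_1',\dots,R_k'\}$ where each $R_i'$ is a fresh relation symbol of the same arity as $R_i$, distinct from all other symbols; and put
\[
\Psi := \exists R_1,\dots,R_k.\Bigl(\phi \;\wedge\; \bigwedge_{i=1}^{k}\forall\overline x.\bigl(R_i'(\overline x)\Rightarrow R_i(\overline x)\bigr) \;\wedge\; \bigwedge_{i=1}^{k}\forall\overline x.\,\lnot R_i'(\overline x)\Bigr).
\]
Since $\phi$ and the added conjuncts are first-order, $\Psi$ is an $\ESO$ $\tau^\ast$-sentence.

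For the first bullet I would check that $\bA\mapsto$ ($\tau$-reduct of $\bA$) is a bijection from $\{\bA : \bA\models\Psi\}$ onto $\{\bC : \bC\models\Phi\}$ whose inverse adjoins empty interpretations for every $R_i'$. The key point is that in any expansion witnessing $\bA\models\Psi$ the conjuncts $\forall\overline x.\,\lnot R_i'(\overline x)$ force $(R_i')^{\bA}=\varnothing$, whence the implications $R_i'(\overline x)\Rightarrow R_i(\overline x)$ become vacuous and place no constraint on the $R_i$, so $\bA\models\Psi$ is equivalent to: the $\tau$-reduct of $\bA$ has an $\{R_1,\dots,R_k\}$-expansion satisfying $\phi$, i.e.\ to $\bA{\restriction}\tau\models\Phi$, \emph{and} all $(R_i')^{\bA}$ are empty. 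Under the standard identification of a $\tau$-structure with the $\tau^\ast$-structure in which every $R_i'$ is empty, this is exactly the first bullet. (This also explains why the $\lnot R_i'$ clauses cannot be dropped: if $\bA$ had a nonempty $R_i'$ with $R_i$ occurring negatively in $\phi$, one could neither enlarge $R_i$ to cover it nor conclude $\bA\not\models\Psi$, so the biconditional would fail.) The third bullet then needs no argument: the existentially quantified symbols of $\Psi$ are exactly $R_1,\dots,R_k$, each with its distinct input symbol $R_i'\in\tau^\ast$, and $\Psi$ contains the required conjunct by construction.

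For the second bullet I would note that the symbols of $\tau\cup\{=\}$ occur in $\Psi$ only inside $\phi$, with unchanged polarity, while each new symbol $R_i'$ occurs only in the clauses $\lnot R_i'(\overline x)\vee R_i(\overline x)$ and $\lnot R_i'(\overline x)$, always negated, and $=$ does not occur in the new conjuncts; hence if $\Phi$ is monotone then so is $\Psi$. If $\Phi$ is in $\SNP$, say $\phi=\forall y_1\cdots y_n.\,\psi$ with $\psi$ quantifier-free, then the two families of new conjuncts are already universal, so after renaming bound variables apart one pulls all universal quantifiers to the front and obtains an $\SNP$ sentence; running both arguments together handles the monotone-$\SNP$ case as well.

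As the construction is purely syntactic I do not anticipate a real obstacle; the only point requiring care is the phrase ``$\bA$ is a $\tau$-structure'' in the first bullet, which must be read — as is standard — as ``$\bA$ is obtained from a $\tau$-structure by interpreting every $R_i'$ as the empty relation'', together with the accompanying observation that it is these forced-empty clauses, rather than any enlargement of the $R_i$, that make $\Psi$ behave exactly like $\Phi$.
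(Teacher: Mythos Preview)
Your proposal is correct and takes essentially the same approach as the paper: add a fresh input symbol $R_i'$ for each existentially quantified $R_i$, conjoin the required implication $R_i'(\overline x)\Rightarrow R_i(\overline x)$, and additionally conjoin $\lnot R_i'(\overline x)$ to force the new input relations to be empty. The paper's proof is identical in substance, the only cosmetic difference being that it first disposes of the degenerate case where $\Phi$ holds on all finite $\tau$-structures (which your general construction handles anyway).
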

\begin{proof}
    Let $\phi$ be the first-order part of $\Phi$; we assume $\phi$ is in conjunctive
    normal form. 
    We first consider the case where $\Phi$ holds on every finite $\tau$-structure. In this case,
    let $\psi$ be the first-order $\tau$-sentence obtained from $\phi$ by removing all clauses that
    contain an existentially quantified symbol from $\Phi$. Since $\Phi$ holds on all finite
    $\tau$-structures, $\psi$ does as well. Hence, $\Phi$ and $\psi$ are equivalent on finite
    structures. Moreover, if $\Phi$ is monotone (resp.\ universal) then so is $\psi$, and $\psi$ satisfies
    the third itemized statement because $\psi$ has no quantified symbols. 

    Let $\sigma$ be the set of existentially quantified symbols in $\Phi$. Consider the
    relational signature $\tau^\ast$ that contains $\tau$ and additionally a symbol $R'$ of arity $r$
    for every $R\in \sigma$ of arity $r$. Let $m$ be the maximum
    arity of the quantified symbols of $\Phi$, and define $\psi$ as
    \[ 
    \psi:= \phi \land \forall x_1,\dots, x_m \left ( \bigwedge_{R \in \sigma} \lnot R'(x_1,\dots, x_r)  \land \left( \bigwedge_{R\in \sigma} R'(x_1,\dots, x_r) \Rightarrow R(x_1,\dots, x_r) \right ) \right ).
    \]
    Note that this formula can be simplified; the purpose of writing it in the given redundant form is to
    make sure that the formula $\Psi$ that we define next meets the syntactic requirements from the observation.
    We now define $\Psi$ as the $\ESO$ $\tau^\ast$-formula obtained from $\Phi$ be replacing
    $\phi$ by $\psi$. 
    Since $\tau\subseteq \tau^\ast$, every $\tau$-structure $\bA$ is 
    a $\tau^\ast$-structure. It is also straightforward to observe that $\bA$ satisfies
    $\Phi$ if and only if it satisfies $\Psi$. Hence, 
    the forward implication
    of the first itemized
    statement holds for $\Phi$ and $\Psi$. To see that the converse also holds consider a
    $\tau^\ast$-structure $\bB$. Notice that if the interpretation of $R'$ in $\bB$
    is not empty for some $R\in \sigma$, then $\bB\not\models \Psi$.
    Otherwise, $\bB$ is a $\tau$-structure, and it follows from the definition
    of $\Psi$ that $\bB\models \Phi$ if and only if $\bB\models \Psi$. We thus
    conclude that the first itemized statement holds for $\Phi$ and $\Psi$. The second and third
    itemized statement hold by the definition of $\Psi$. 
\end{proof}

\section{Monotone connected extensional SNP}
\label{ap:fdCSP->mceSNP}

The following lemma is used in our proof of Theorem~\ref{thm:muSNP-fbCSP-sucSNPwe}.

\begin{lemma}\label{lem:B->BB2}
    For every extensional $\SNP$ $\tau$-sentence $\Phi$ and a binary symbol $E'$ not in $\tau$
    there is a connected extensional $\SNP$ $(\tau\cup\{E'\})$-sentence $\Psi$ such that the following statements
    hold.
    \begin{itemize}
        \item There is a polynomial-time reduction from $\Phi$ to $\Psi$, and a polynomial-time conjunctive reduction
        from $\Psi$ to $\Phi$. 
        \item If $\Phi$ is preserved under disjoint unions, then $\Phi$ and $\Psi$ are log-space equivalent.
        \item If $\Phi$ is in monotone $\SNP$, then $\Psi$ is in monotone $\SNP$.
    \end{itemize}
\end{lemma}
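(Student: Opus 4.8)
The plan is to glue the disconnected clauses of $\Phi$ together using a fresh existentially quantified binary relation $F$ that extends the new input symbol $E'$ and that we \emph{force to be an equivalence relation}. Concretely, I would bring $\Phi$ into the shape $\exists\,\bar R\,\forall\,\bar x\,\bigwedge_j C_j$ with the $C_j$ in clausal form, with one distinguished clause $R_i'(\bar x)\Rightarrow R_i(\bar x)$ witnessing extensionality for each $R_i$; I may assume $\Phi$ is equality-free (for monotone $\Phi$ this is the convention of Section~\ref{sect:prelims}, and in general negative equalities are eliminated as there while positive ones only require routine extra care). For every clause $C_j$ that is \emph{not} an extension clause, pad it with a fresh dummy variable if it has fewer than two variables and set
\[
C_j' \;:=\; C_j \ \vee\ \bigvee\{\,\lnot F(x,y)\;:\;x\neq y\ \text{are variables of}\ C_j\,\},
\]
so that the $F$-part of the canonical database of $C_j'$ is a clique on the variables of $C_j$. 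Then put
\begin{align*}
\Psi \;:=\; \exists\,\bar R, F\ \forall\,\bar x\ \Big(&\textstyle\bigwedge_i\big(R_i'(\bar x)\Rightarrow R_i(\bar x)\big)\ \wedge\ \big(E'(\bar x)\Rightarrow F(\bar x)\big)\\
&\wedge\ \mathrm{eqrel}(F)\ \wedge\ \textstyle\bigwedge_j C_j'\Big),
\end{align*}
where $\mathrm{eqrel}(F)$ is the conjunction of the (already connected) clauses saying that $F$ is reflexive, symmetric and transitive. A direct check shows that $\Psi$ is a connected extensional $\SNP$ $(\tau\cup\{E'\})$-sentence: the canonical database of each $C_j'$ is connected thanks to the $F$-clique, the auxiliary clauses are connected, $F$ extends $E'$ and the $R_i$ still extend the $R_i'$, and the only input symbol occurring outside an extension clause is $E'$, which occurs only in $E'(\bar x)\Rightarrow F(\bar x)$. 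Moreover, if $\Phi$ is monotone then every literal of $\Psi$ not already in $\Phi$ is the negative literal $\lnot E'(\bar x)$ of the $F$-extension clause, a literal $\lnot F$, or a literal in the existential symbol $F$, and no equality is introduced; since $F$ is existentially quantified, $\Psi$ is monotone as well.

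For the reduction from $\Phi$ to $\Psi$ I would map a $\tau$-structure $\bA$ to the expansion $\bA^{+}$ with $E'^{\bA^{+}}:=A\times A$. Then $F$ is forced to equal $A\times A$, which is an equivalence relation, so every literal $\lnot F(x,y)$ is false and each $C_j'$ collapses to $C_j$; hence $\bA^{+}\models\Psi$ iff $\bA\models\Phi$, and the map is computable in log-space.

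For the reduction from $\Psi$ to $\Phi$, given a $(\tau\cup\{E'\})$-structure $\bB$ let $\equiv$ be the equivalence closure of $E'^{\bB}$; its classes are the connected components of the graph $(B,E'^{\bB})$, so they and the induced substructures are log-space computable and there are at most $|B|$ of them. The two facts I would establish are: (i) enlarging $F$ only makes the $C_j'$ harder to satisfy, while $\mathrm{eqrel}(F)$ and $E'(\bar x)\Rightarrow F(\bar x)$ force $F\supseteq{\equiv}$, so if $\bB\models\Psi$ then it is already witnessed by $F={\equiv}$; and (ii) since $C_j'$ clique-connects all variables of $C_j$ through $\lnot F$, an assignment falsifies all clique-literals of $C_j'$ --- thereby ``activating'' $C_j$ --- exactly when its image lies inside a single $\equiv$-class. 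Together with the extension clauses, which force the witnessing $\bar R$ to contain $R'^{\bB}$ globally, this should yield
\[
\bB\models\Psi \quad\Longleftrightarrow\quad \bB[K]{\upharpoonright}\tau \ \models\ \Phi \ \text{ for every }\equiv\text{-class }K,
\]
with ``$\Leftarrow$'' obtained by taking the disjoint union of per-class witnessing expansions and adjoining $R'^{\bB}$ (whose cross-class tuples never activate a clause), and ``$\Rightarrow$'' by restricting a global witness to each class. This is a polynomial-time (in fact log-space) conjunctive reduction with at most $|B|$ queries; and if in addition $\Phi$ is preserved under disjoint unions, the right-hand side is equivalent to $\biguplus_K \bB[K]{\upharpoonright}\tau \models \Phi$, turning it into a single log-space many-one reduction and hence, with the first reduction, into a log-space equivalence.

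The main obstacle is the $\Psi\to\Phi$ direction, and specifically the design choice to clique-connect \emph{every} non-extension clause through a relation that is \emph{forced to be an equivalence relation}: this is exactly what converts ``all clique-literals present'' into ``the assignment lives in one $\equiv$-class'', making the polynomially many, log-space computable $\equiv$-classes the right granularity for a black-box reduction to $\Phi$. Proving the displayed equivalence --- gluing per-class witnesses into a global one and conversely restricting --- is the technical core; the remaining points (connectedness, extensionality, monotonicity, and the handling of equality literals) are routine bookkeeping.
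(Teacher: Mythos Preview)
Your proposal is correct and follows essentially the same approach as the paper's proof (which itself adapts Proposition~1.4.11 of \cite{Book}): introduce a fresh existentially quantified binary relation extending $E'$, force it to be symmetric/transitive, pad each non-extension clause with negated atoms of this relation to achieve connectedness and to localise the clause to a single class, and then reduce via the map $\bA\mapsto(\bA,A^2)$ in one direction and a per-component check in the other. The only cosmetic differences are that you additionally require reflexivity and you omit the paper's clauses $R(\bar x)\Rightarrow E(x_i,x_j)$ for input symbols $R\in\tau$, so your decomposition is by $E'$-components rather than Gaifman components of $(\bB,E')$; both choices work, and yours in fact sidesteps a small wrinkle with singleton components and with the symbols $R_i'$ reappearing outside their extension clauses.
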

\begin{proof}
    We follow almost the same proof as the one for Proposition 1.4.11 in~\cite{Book} ---
    we thus choose to refer the reader to that proof for any missing details in this one. Let $\tau$ be a 
    finite relational signature and let $\Phi$ be an extensional $\SNP$ $\tau$-sentence of the 
    form $$\exists R_1,\dots, R_k \forall x_1,\dots, x_n  (\nu \land \phi)$$ where $\nu$ consists of the
    conjuncts $R_i'\Rightarrow R_i$ for every $i\in [k]$  and $\phi$ is a quantifier-free 
    $(\tau\cup\{R_1,\dots, R_k\})$-formula in conjunctive normal form.
    We consider two new binary relation symbols $E'$ and $E$ and the signature $\tau' := \tau \cup\{E'\}$.
    We define the uniform  $\SNP$ $\tau'$-sentence
    $$\Psi:= \exists R_1,\dots, R_k, E \; \forall x_1,\dots, x_n, z_1,z_2,z_3 (\nu\land E'(z_1,z_2)
    \Rightarrow E(z_1,z_2) \land \psi)$$ where $\psi$ is the conjunction of the following clauses: 
    \begin{itemize} 
        \item  $\lnot E(z_1,z_2) \lor \lnot E(z_2,z_3)\lor E(z_1,z_3)$ (i.e., $E$ is a transitive relation),
        \item $\lnot E(z_1,z_2) \lor E(z_2,z_1)$ (i.e., $E$ is a symmetric relation),
        \item for every $R\in \tau$ of arity $r$ and all $i, j \in  [r]$ where $i\neq j$, the formula $\psi$ contains the conjunct
        $R(x_1,\dots, x_r) \Rightarrow E(x_i,x_j)$, and
        \item for each clause $\phi'$ of $\phi$ with free variables $y_1,\dots, y_m \subseteq \{x_1,\dots, x_n\}$, the formula 
        $\psi$ contains the conjunct
        \[
        \phi' \lor \bigvee_{i,j\in [m]} \lnot E(y_i,y_j).
        \]
    \end{itemize}
    Clearly, $\Psi$ is connected. Since $\Phi$ is in extensional SNP, $\Psi$
    is in extensional SNP as well. It is not hard to verify that a $\tau'$-structure $(\bB,E')$ satisfies
    $\Psi$ if and only if the $\tau$-reduct $\bC$ of every connected component $(\bC,E')$ of $(\bB,E')$ satisfies
    $\Phi$. Thus, there is a polynomial-time conjunctive reduction from $\Psi$ to $\Phi$. 
    If $\Phi$ is preserved under disjoint unions, then $(\bB,E')\mapsto \bB$ is a polynomial-time
    (and log-space) reduction from  $\Psi$ to $\Phi$. For the converse reduction, on input structure $\bA$
    consider the $(\tau\cup\{E'\})$-structure $\bB:=(\bA, A^2)$ and notice that $\bA\models \Phi$ if and only
    if $\bB \models \Psi$. Hence, $\Phi$ reduces in logarithmic space to $\Psi$.
    Finally, it follows from the definition of $\Psi$ that if $\Phi$ is monotone, then $\Psi$ 
    is monotone as well.
\end{proof}

\subsection*{Finite-domain CSPs}
This appendix contains a lemma which in particular implies that every finite domain CSP is
expressible in connected monotone extensional $\SNP$. Given a relational $\tau$-structure
$\bA$, we denote by $\Age(\bA)$ the class of finite structures that embed into $\bA$;
equivalently, the class of finite substructures of $\bA$, up to isomorphism. A set of
finite structures $\calF$ is called a set of \emph{bounds} of $\bA$, 
if for every finite structure $\bB$ it is the case that $\bB\in \Age(\bA)$ if and only if $\bB$
is $\calF$-free. We say that $\calF$ is a set of \emph{connected} bounds if every structure in
$\calF$  is connected. The \emph{Gaifman graph} of $\bA$ is the undirected 
graph $G(\bA)$ with vertex set $A$ and there is an edge $ab$ if $a$ and $b$ belong to some common
tuple in $R^\bA$ for some $R\in \tau$. An \emph{induced path} in $\bA$ is a sequence of vertices
$a_1,\dots, a_n$ that induce a path in the Gaifman graph of $\bA$. In this case, we say that
the path is an \emph{$a_1a_n$-path}, and that the \emph{length} of this path is $n$. It is
straightforward to observe that for every finite relational signature $\tau$ and every positive
integer $N$, there is a finite set of $\tau$-structures $\calP_N$ such that a $\tau$-structure
$\bA$ has no induced path of length $N$ if and only if $\bA$ is $\calP_N$-free.

\begin{lemma}\label{lem:bounded-path1}
    Let $\bA$ be a finitely bounded structure. If there is a positive integer $N$ such that
    every induced path in $\bA$ has length strictly less than $N$, then there is a structure 
    $\bB$ such that
    \begin{itemize}
        \item $\Age(\bA) \subseteq \Age(\bB)$, 
        \item a structure $\bC$ belongs to $\Age(\bB)$ if and only if every connected component of
        $\bC$ belongs to $\Age(\bA)$, and
        \item there is a finite set of connected bounds for $\Age(\bB)$.
    \end{itemize}
\end{lemma}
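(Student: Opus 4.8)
\emph{Proof plan.} Fix a finite set $\calF$ of bounds for $\Age(\bA)$ and put $M := \max_{\bF \in \calF} |F|$. Let $\calD$ be the class of all finite $\tau$-structures each of whose connected components lies in $\Age(\bA)$. A concrete choice for $\bB$ is the disjoint union of countably many copies of every finite structure in $\Age(\bA)$: since $\Age(\bA)$ is hereditary, the connected components of $\bB$ are exactly countably many copies of each connected structure in $\Age(\bA)$, and one checks directly that $\Age(\bB) = \calD$, because a finite $\bC$ embeds into $\bB$ iff each of its connected components embeds into some component of $\bB$ iff each of its components lies in $\Age(\bA)$. (Alternatively, $\calD$ is hereditary, closed under disjoint unions, hence has the joint embedding property, and has only countably many members up to isomorphism, so it is the age of some countable structure by the standard characterization of ages, e.g.\ in~\cite{Hodges}.) Since a connected component of a member of $\Age(\bA)$ is again in $\Age(\bA)$, we get $\Age(\bA) \subseteq \calD = \Age(\bB)$, so the first two displayed items hold.

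\emph{Reduction of the third item to a size bound.} Call a connected $\tau$-structure a \emph{minimal connected obstruction} if it does not belong to $\Age(\bA)$ but every one of its proper connected substructures does, and let $\calG$ be the class of all minimal connected obstructions. I claim $\calD$ is exactly the class of $\calG$-free finite structures, which, once we know $\calG$ is finite, establishes the third item. For the easy direction, if $\bC \in \calD$ and some $\bG \in \calG$ embedded into $\bC$, then $\bG \in \calD$ by hereditarity of $\calD$, contradicting that $\bG$ is connected and not in $\Age(\bA)$. Conversely, if $\bC \notin \calD$ then some connected component of $\bC$ fails to lie in $\Age(\bA)$, and an inclusion-minimal connected substructure of that component that is not in $\Age(\bA)$ is a minimal connected obstruction embedding into $\bC$. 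So it remains to bound the size of minimal connected obstructions by a function of $M$ and $N$; then $\calG$ is finite, since there are only finitely many structures of bounded size up to isomorphism.

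\emph{The size bound.} Let $\bG$ be a minimal connected obstruction. As $\bG \notin \Age(\bA)$, some $\bF \in \calF$ has a copy $\bF^\ast \subseteq \bG$; write $V_0 := V(\bF^\ast)$, so $|V_0| \le M$ and $\bF^\ast$ has at most $M$ connected components. Minimality of $\bG$ forces $\bG$ to be minimal among connected structures whose domain contains $V_0$, since any proper connected substructure of $\bG$ containing $V_0$ would again contain the copy $\bF^\ast$ of $\bF$ and hence lie outside $\Age(\bA)$; in particular $\bG - v$ is disconnected for every $v \in V(\bG) \setminus V_0$. Exploiting this cut-vertex property one shows that $\bG$ decomposes as $\bF^\ast$ together with a family of size bounded in $M$ of internally disjoint induced paths (with internal vertices outside $V_0$) and branch vertices that join the components of $\bF^\ast$ into a tree --- a chord or a superfluous vertex would contradict the minimality of $\bG$. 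Each such path, unless it already exhausts $\bG$, is a proper connected substructure, hence lies in $\Age(\bA)$, and being an induced path in a structure that embeds into $\bA$ it has fewer than $N$ vertices by hypothesis. The remaining case, where $\bG$ is itself a path structure, is handled the same way via its proper subpaths and yields at most $N$ vertices. Summing these contributions gives $|V(\bG)| \le c(M,N)$ for an explicit function $c$, as required.

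\emph{Main obstacle.} The delicate part is the structural decomposition in the last paragraph: when $\tau$ contains relations of arity greater than two one must track how tuples meeting both $V_0$ and $V(\bG)\setminus V_0$ affect connectivity, and verify carefully that the ``tree of internally disjoint induced paths'' picture accounts for all of $\bG$ and that minimality of $\bG$ genuinely forbids chords and superfluous vertices. Everything else --- realizing $\calD$ as an age and the hereditarity bookkeeping identifying $\calD$ with the class of $\calG$-free structures --- is routine.
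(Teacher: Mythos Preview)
Your overall strategy is sound and yields a cleaner set of bounds than the paper's, but the size-bound argument has a genuine gap: the claimed ``tree of internally disjoint induced paths'' decomposition of a minimal connected obstruction is false. Take graphs with bound set $\calF=\{\overline{K_n}\}$, so $\Age(\bA)$ is the class of graphs of independence number $<n$ and every induced path in $\bA$ has fewer than $2n-1$ vertices. Then the graph consisting of a cycle $C_n$ with a pendant leaf at each cycle vertex is a minimal connected obstruction: the $n$ leaves form a copy of $\overline{K_n}$, removing any cycle vertex disconnects its leaf, and removing any leaf drops the independence number below $n$. Here $V_0$ is the set of leaves, and the structure joining the components of $\bF^\ast$ is a \emph{cycle}, not a tree. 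The same happens with $K_n$ plus $n$ pendant leaves, where the connecting structure is a clique. So ``a chord or a superfluous vertex would contradict minimality'' does not force a tree; it only rules out chords on individual connecting paths.

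Your approach is easily repaired, and the fix is essentially the counting argument the paper uses. Once you know $\bG$ is vertex-minimal connected containing $V_0$, argue: if the Gaifman graph of $\bG$ is itself a path you get $|\bG|\le N$ directly from minimality; otherwise every induced path in $\bG$ is a proper connected substructure, hence lies in $\Age(\bA)$ and has fewer than $N$ vertices, so every shortest path in the Gaifman graph does too. For each pair $a,b\in V_0$ fix such a shortest path $P_{ab}$; the substructure induced on $V_0\cup\bigcup_{a,b}P_{ab}$ is connected and contains $V_0$, so by minimality it equals $\bG$, giving $|\bG|\le M+\binom{M}{2}N$.

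The paper proceeds differently: rather than taking the minimal connected obstructions, it explicitly throws in all path-structures of length $N$ as bounds and then, for each disconnected $\bF\in\calF$, adds the minimal structures that contain $\bF$ \emph{together with} a short induced path between every pair of components of $\bF$. The advantage of the paper's obstruction set is that the short-path property is baked into the definition, so the size estimate is immediate; the advantage of your set $\calG$ is that it is the canonical minimal one, and the verification that $\calG$-freeness characterises $\calD$ is a one-liner.
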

\begin{proof}
    Let $\calF$ be a finite set of bounds of $\bA$, and let $\calP_N$ be as in the paragraph above.
    Let $\calF_C$ be the set of connected structures in $\calF$, and for every disconnected
    structure $\bF\in \calF$ where $\bF := \biguplus_{i = 1}^n \bF_n$
    and each $\bF_n$ is a connected structure, let $\calC(\bF)$ be the class of finite $\tau$-structures
    $\bC$  such that
    \begin{itemize}
        \item $\bF$ embeds into $\bC$, and
        \item for all distinct $i,j \in \{1,\dots,n\}$, $a\in F_i$, and $b\in F_j$, there is an induced $ab$-path
        in $\bC$ of length strictly less than $N$. 
    \end{itemize}
    Notice that $\calC(\bF)$ has, up to isomorphism, a finite set of minimal structures $\mathbb M$ 
    with respect to the embedding order. Indeed, let $\mathbb M$ be such a minimal structure, and
    identify $\bF$ with a substructure of $\mathbb M$ (isomorphic to 
    $\bF$). For every $a,b\in F$ that belong to different components of $\bF$, let $P_{ab}$ be the set of vertices
    witnessing that there is an induced $ab$-path of length less than $N$ in $\mathbb M$. By the minimality of $\mathbb M$, it must be
    the case that every element of $\mathbb M$  belongs to $F$ or to $P_{ab}$ for some $a,b\in F$ (that belong to different
    components of $\bF$). Hence, the cardinality of $M$ is bounded by $|F|$, plus $|F|^2$ times the maximum number $m$ of vertices
    in an induced path of length $N-1$ --- notice that the last $m$ only depends on the signature of $\mathbb M$ and
    on $N$, so it does not depend on $\mathbb M$. 
    
    We denote this finite set of minimal structures in $\calC(\bF)$ by $\min(\calC(\bF))$. 
    Finally, let
    \[
    \calF^\ast:= \calF_C\cup \calP_N \cup \bigcup_{\bF\in \calF\setminus \calF_C}\min(\calC(\bF)).
    \]
    Since $\calF^\ast$ consists of connected structures, the class of $\calF^\ast$-free structures
    is closed under disjoint unions, and so, there is a structure $\bB$ whose age is the class
    of finite $\calF^\ast$-free structures (see, e.g.,~\cite[Proposition 2.3.1]{Book}). 
    We claim that $\bB$ satisfies the itemized statements of the lemma.
    By construction, it satisfies the third one. To show the first one, 
    let $\bC$ be a finite $\tau$-structure that embeds into $\bA$. In particular,
    $\bC$ is $\calF$-free. Moreover, $\bA$ is $\calP_N$-free because
    $\bA$ has no induced path of length $N$, and thus $\bC$ is also $\calP_N$-free. 
    Hence, $\bC$ is  $\calF^\ast$-free, and so $\bC\in \Age(\bB)$. 
    
    To show the second itemized statement, 
    first observe that if every connected component of a finite structure $\bC$ belongs to $\Age(\bA)$, then $\bC$ is a disjoint
    unions of structures that belong to $\Age(\bA) \subseteq \Age(\bB)$. Since $\bB$ has a set of connected bounds, its age is closed
    under disjoint unions, and so $\bC\in \Age(\bB)$. 
    For the converse implication, it suffices to prove that every connected structure $\bC$ in the age of $\bB$ belongs to the age of $\bA$.
    Since $\bC$ is an $\calF^\ast$-free structure, it contains no induced path of length
    larger than $N$. Now, suppose for contradiction that $\bC\not\in \Age(\bA)$, so there is an embedding of some $\bF\in \calF$ into $\bC$.
    Moreover, since $\calF_C\subseteq \calF^\ast$, it must be the case that $\bF$
    is not connected. But since $\bC$ is connected and has no path of length larger than $N$, we must have
    $\bC \in \calC(\bF)$. This implies that $\bC$  contains some structure from $\min(\calC(\bF))\subseteq \calF^\ast$, 
    contradicting the assumption that $\bC$ is $\calF^\ast$-free. The claim now follows. 
\end{proof}

\begin{lemma}\label{lem:boundedpath2}
    Let $\bA$ be a finitely bounded structure. If there is a positive integer $N$ such that
    every induced path in $\bA$ has length strictly less than $N$, then $\CSP(\bA)$ is in monotone
    connected extensional $\SNP$.
\end{lemma}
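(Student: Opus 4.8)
The plan is to reduce the statement to the ``moreover'' part of Corollary~\ref{cor:FBstructures} by replacing $\bA$ with a structure $\bB$ that has a \emph{finite set of connected bounds} but exactly the same CSP. First I would apply Lemma~\ref{lem:bounded-path1} to the structure $\bA$ and the bound $N$, obtaining a $\tau$-structure $\bB$ with $\Age(\bA)\subseteq\Age(\bB)$, such that a finite structure lies in $\Age(\bB)$ if and only if each of its connected components lies in $\Age(\bA)$, and such that $\Age(\bB)$ admits a finite set of connected bounds; in particular $\bB$ is finitely bounded.

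The key step is then to verify that $\CSP(\bA)=\CSP(\bB)$ as classes of finite structures. For $\CSP(\bA)\subseteq\CSP(\bB)$: given a finite $\bC$ with a homomorphism $\bC\to\bA$, its image is a finite substructure of $\bA$, hence lies in $\Age(\bA)\subseteq\Age(\bB)$ and therefore embeds into $\bB$; composing the two maps gives $\bC\to\bB$. For the converse, given a homomorphism $\bC\to\bB$, its image $\bC'$ is a finite substructure of $\bB$, so by the component characterization each connected component of $\bC'$ belongs to $\Age(\bA)$ and hence embeds into $\bA$; since the components of $\bC'$ have pairwise disjoint domains, these embeddings combine into a single homomorphism $\bC'\to\bA$, and composing with $\bC\to\bC'$ yields $\bC\to\bA$, so $\bC\in\CSP(\bA)$. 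This gives $\CSP(\bB)\subseteq\CSP(\bA)$; this direction is exactly where closure of single-template CSPs under disjoint unions, together with the component description of $\Age(\bB)$ from Lemma~\ref{lem:bounded-path1}, is used.

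Finally, since $\bB$ is finitely bounded with a finite set of connected bounds, the ``moreover'' part of Corollary~\ref{cor:FBstructures} yields a monotone connected extensional $\SNP$ sentence describing $\CSP(\bB)$; because $\CSP(\bA)=\CSP(\bB)$, the same sentence describes $\CSP(\bA)$, which proves the lemma. I do not expect a real obstacle here beyond the bookkeeping in the two inclusions above: all of the genuine work --- constructing $\bB$ and controlling its bounds, and translating connected bounds into a connected monotone extensional $\SNP$ sentence --- has already been carried out in Lemma~\ref{lem:bounded-path1} and Corollary~\ref{cor:FBstructures}.
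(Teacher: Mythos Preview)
Your proposal is correct and follows exactly the same route as the paper: apply Lemma~\ref{lem:bounded-path1} to obtain $\bB$, argue that $\CSP(\bA)=\CSP(\bB)$ from the component description of $\Age(\bB)$, and then invoke the ``moreover'' part of Corollary~\ref{cor:FBstructures}. The paper's proof is terser, simply asserting that $\CSP(\bA)=\CSP(\bB)$ follows from the second itemized property of $\bB$, whereas you spell out both inclusions explicitly.
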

\begin{proof}
        Let $\bB$ be the structure from Lemma~\ref{lem:bounded-path1} for $\bA$. It follows from the 
        second itemized property of $\bB$ that
        $\CSP(\bA) = \CSP(\bB)$. Since $\bB$ has a finite set of connected bounds by the third itemized property of $\bB$, the claim of the lemma
        follows from Corollary~\ref{cor:FBstructures}.
\end{proof}

\begin{corollary}\label{cor:finite-domain-MCESNP}
   If $\bA$ is a finite structure, then $\CSP(\bA)$ is in monotone connected extensional $\SNP$. 
\end{corollary}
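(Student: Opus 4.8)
The plan is to obtain this as a direct specialization of Lemma~\ref{lem:boundedpath2}. Two trivial facts need to be checked: that a finite structure is finitely bounded, and that it admits a uniform bound on the lengths of its induced paths. For the first, given a finite $\tau$-structure $\bA$ one may take as a set of bounds all $\tau$-structures on at most $|A|+1$ vertices that do not embed into $\bA$; this set is finite, and a finite structure $\bB$ embeds into $\bA$ if and only if it avoids all of these, since if $\bB$ does not embed into $\bA$ then either $|B|\le |A|+1$ (so $\bB$ is itself a bound) or $\bB$ has a substructure on exactly $|A|+1$ vertices, which cannot embed into $\bA$ for cardinality reasons.

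For the second fact, recall that an induced path $a_1,\dots,a_n$ in $\bA$ is a sequence inducing a path in the Gaifman graph, hence its vertices are pairwise distinct and $n\le |A|$. Therefore, setting $N := |A|+1$, every induced path in $\bA$ has length strictly less than $N$. Applying Lemma~\ref{lem:boundedpath2} to the finitely bounded structure $\bA$ with this value of $N$ gives that $\CSP(\bA)$ is in monotone connected extensional $\SNP$.

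I do not expect any real obstacle here: the corollary is just the case of Lemma~\ref{lem:boundedpath2} in which the induced-path bound comes for free from finiteness, and all the substantive work — passing from $\bA$ and the bound $N$ to a structure $\bB$ with $\CSP(\bB)=\CSP(\bA)$ that has a finite set of \emph{connected} bounds (Lemma~\ref{lem:bounded-path1}), and then invoking Corollary~\ref{cor:FBstructures} — is already carried out upstream. The only points requiring care are the two elementary verifications above.
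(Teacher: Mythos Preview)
Your proposal is correct and matches the paper's approach: the corollary is stated without proof immediately after Lemma~\ref{lem:boundedpath2}, precisely because it is the specialization you describe. The two verifications you supply (finite structures are finitely bounded via the standard $|A|+1$ argument, and induced paths in a finite structure have length at most $|A|$) are exactly what is needed to invoke that lemma.
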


\end{document}